\newtheorem{theorem}{Theorem}
\newtheorem{lemma}{Lemma}
\newtheorem{corollary}{Corollary}
\theoremstyle{definition}
\newtheorem{remark}{Remark}
\newtheorem{definition}{Definition}
\newcommand{\argmin}{\mathop{\mathrm{arg\,min}}}
\newcommand{\minimize}{\mathop{\mathrm{minimize}}}
\newcommand{\maximize}{\mathop{\mathrm{maximize}}}
\newcommand{\st}{\mathop{\mathrm{subject\,\,to}}}
\def\R{\mathbb{R}}
\def\P{\mathbb{P}}
\def\dim{\mathrm{dim}}
\def\col{\mathrm{col}}
\def\row{\mathrm{row}}
\def\nul{\mathrm{null}}
\def\rank{\mathrm{rank}}
\def\nuli{\mathrm{nullity}}
\def\spa{\mathrm{span}}
\def\sign{\mathrm{sign}}
\def\supp{\mathrm{supp}}
\def\diag{\mathrm{diag}}
\def\aff{\mathrm{aff}}
\def\ones{\mathds{1}}
\def\hbeta{\hat{\beta}}
\def\hgamma{\hat{\gamma}}
\def\hu{\hat{u}}
\def\hv{\hat{v}}
\def\cA{\mathcal{A}}
\def\cB{\mathcal{B}}
\def\cN{\mathcal{N}}
\def\ie{i.e.}
\def\eg{e.g.}
\def\inte{\mathrm{int}}
\def\bd{\mathrm{bd}}
\def\relbd{\mathrm{relbd}}
\def\dom{\mathrm{dom}}
\def\ran{\mathrm{ran}}
\def\S{\mathbb{S}}
\def\B{\mathbb{B}}
\def\cH{\mathcal{H}}
\def\cL{\mathcal{L}}
\def\cone{\mathrm{cone}}
\title{The Generalized Lasso Problem and Uniqueness}
\author{Alnur Ali$^1$ \and Ryan J. Tibshirani$^{1,2}$}
\date{$^1$Machine Learning Department, Carnegie Mellon University \\
$^2$Department of Statistics, Carnegie Mellon University}
\begin{document}
\maketitle

\begin{abstract}
We study uniqueness in the generalized lasso problem, where the penalty is the
$\ell_1$ norm of a matrix $D$ times the coefficient vector.  We derive a broad
result on uniqueness that places weak assumptions on the predictor matrix $X$
and penalty matrix $D$; the implication is that, if $D$ is fixed and its null
space is not too large (the dimension of its null space is at most the
number of samples), and $X$ and response vector $y$ jointly follow an
absolutely continuous distribution, then the generalized lasso problem has a
unique solution almost surely, regardless of the number of predictors relative
to the number of samples.  This effectively generalizes previous
uniqueness results for the lasso problem \citep{tibshirani2013lasso}
(which corresponds to the special case $D=I$).  Further, we extend our study to 
the case in which the loss is given by the negative log-likelihood from a
generalized linear model. In addition to uniqueness results, we derive 
results on the local stability of generalized lasso solutions that might be of
interest in their own right.  
\end{abstract}

\section{Introduction}
\label{sec:introduction}

We consider the {\it generalized lasso} problem 
\begin{equation}
\label{eq:genlasso}
\minimize_{\beta \in \R^p} \; \frac{1}{2} \| y - X \beta \|_2^2 + 
\lambda \| D \beta \|_1,
\end{equation}
where $y \in \R^n$ is a response vector, $X \in \R^{n\times p}$ is a predictor
matrix, $D \in \R^{m\times p}$ is a penalty matrix, and $\lambda \geq 0$ is a 
tuning parameter.  As explained in \citet{tibshirani2011solution}, the
generalized lasso problem \eqref{eq:genlasso} encompasses several well-studied 
problems as special cases, corresponding to different choices of $D$, \eg,  
the lasso \citep{tibshirani1996regression}, 
the fused lasso \citep{rudin1992nonlinear,tibshirani2005sparsity}, 
trend filtering \citep{steidl2006splines,kim2009ell1},
the graph fused lasso \citep{hoefling2010path},
graph trend filtering \citep{wang2016trend}, 
Kronecker trend filtering \citep{sadhanala2017higher},
among others. (For all problems except the lasso problem, the literature is 
mainly focused on the so-called ``signal approximator'' case, where
$X=I$, and the responses have a certain underlying structure; but the
``regression'' case, where $X$ is arbitrary, naturally arises whenever the
predictor variables---rather than the responses---have an analogous structure.)        

There has been an abundance of theoretical and computational work on the
generalized lasso and its special cases.  In the current paper, we examine
sufficient conditions under which the solution in 
\eqref{eq:genlasso} will be unique. While this is simple enough to state, it is a
problem of fundamental importance.  The generalized lasso has been used as a
modeling tool in numerous application areas, such as copy number variation 
analysis \citep{tibshirani2008spatial}, sMRI image classification
\citep{xin2014efficient}, evolutionary shift detection on phylogenetic trees, 
\citep{khabbazian2016fast}, motion-capture tracking \citep{padilla2017tensor},
and longitudinal prediction of disease progression \citep{adhikari2019high}.  
In such applications, the structure of the solution \smash{$\hbeta$}
in hand (found by using one of many optimization methods 
applicable to \eqref{eq:genlasso}, a convex quadratic program) usually
carries meaning---this is because $D$ has been  
carefully chosen so that sparsity in \smash{$D\hbeta$} translates into some
interesting and domain-appropriate structure for \smash{$\hbeta$}.  Of course, 
nonuniqueness of the solution in \eqref{eq:genlasso} would cause complications 
in interpreting this structure. (The practitioner would be left wondering: are
there other solutions providing compementary, or even contradictory structures?) 
Further, beyond interpretation, nonuniqueness of the generalized lasso solution
would clearly cause complications if we are seeking to use this solution to make
predictions (via \smash{$x^T \hbeta$}, for a new predictor vector $x \in \R^p$),
as different solutions would lead to different predictions (potentially very
different ones).  

When $p \leq n$ and $\rank(X)=p$, there is always a unique
solution in \eqref{eq:genlasso} due to strict convexity of the squared loss
term.  Our focus will thus be in deriving sufficient conditions for
uniqueness in the high-dimensional case, where $\rank(X)<p$.  
It also worth noting that when $\nul(X) \cap \nul(D) \neq \{0\}$ problem 
\eqref{eq:genlasso} cannot have a unique solution.  (If $\eta \neq 0$ lies in
this intersection, and \smash{$\hbeta$} is a solution in \eqref{eq:genlasso},
then so will be \smash{$\hbeta+\eta$}.) Therefore, at the very least, any
sufficient condition for uniqueness in \eqref{eq:genlasso} must include (or
imply) the null space condition $\nul(X) \cap \nul(D) = \{0\}$.  

In the lasso problem, defined by taking $D=I$ in \eqref{eq:genlasso}, several 
authors have studied conditions for uniqueness, notably 
\citet{tibshirani2013lasso}, who showed that when the entries of $X$ are
drawn from an arbitrary continuous distribution, the lasso solution is unique
almost surely.  One of the main results in this paper yields this lasso result
as a special case; see Theorem \ref{thm:uniqueness}, and Remark
\ref{rem:full_row_rank} following the theorem.  Moreover, our study of
uniqueness leads us to develop intermediate properties of generalized lasso
solutions that may be of interest in their own right---in particular, when we
broaden our focus to a version of \eqref{eq:genlasso} in which the squared loss
is replaced by a general loss function, we derive local stability properties of 
solutions that have potential applications beyond this paper.

In the remainder of this introduction, we describe the implications of our
uniqueness results for various special cases of the generalized lasso,
discuss related work, and then cover notation and an outline of the rest of the 
paper.  

\subsection{Uniqueness in special cases}

The following is an application of Theorem \ref{thm:uniqueness} to various
special cases for the penalty matrix $D$.  The takeaway is that, for
continuously distributed predictors and responses, uniqueness can be ensured
almost surely in various interesting cases of the generalized lasso, provided
that $n$ is not ``too small'', meaning that the sample size $n$ is at least the
nullity (dimension of the null space) of $D$.  (Some of the cases presented in
the corollary can be folded into others, but we list them anyway for clarity.)      

\begin{corollary}
\label{cor:uniqueness}
Fix any $\lambda>0$.  Assume the joint distribution of $(X,y)$ is absolutely
continuous with respect to $(np+n)$-dimensional Lebesgue measure.  Then 
problem \eqref{eq:genlasso} admits a unique solution almost surely, in any one
of the following cases: 
\begin{enumerate}[(i)]
\item $D=I \in \R^{p \times p}$ is the identity matrix; 
\item $D \in \R^{(p-1) \times p}$ is the first difference matrix, i.e., fused
  lasso penalty matrix (see Section 2.1.1 in \citet{tibshirani2011solution});  
\item $D \in \R^{(p-k-1) \times p}$ is the $(k+1)$st order difference matrix,
  i.e., $k$th order trend filtering penalty matrix (see Section 2.1.2 in
  \citet{tibshirani2011solution}), and $n \geq k+1$;   
\item $D \in \R^{m \times p}$ is the graph fused lasso penalty matrix, defined
  over a graph with $m$ edges, $n$ nodes, and $r$ connected components (see
  Section 2.1.1 in \citet{tibshirani2011solution}), and $n \geq r$;  
\item $D \in \R^{m \times p}$ is the $k$th order graph trend filtering penalty 
  matrix, defined over a graph with $m$ edges, $n$ nodes, and $r$ connected
  components (see \citet{wang2016trend}), and $n \geq r$;  
\item $D \in \R^{(N-k-1)N^{d-1}d \times N^d}$ is the $k$th order Kronecker trend
  filtering penalty matrix, defined over a $d$-dimensional grid graph with all
  equal side lengths $N=n^{1/d}$ (see \citet{sadhanala2017higher}), and $n 
  \geq (k+1)^d$. 
\end{enumerate}
\end{corollary}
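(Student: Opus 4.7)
I would prove Corollary~\ref{cor:uniqueness} as a direct case-by-case application of Theorem~\ref{thm:uniqueness}. From the abstract, that theorem gives almost sure uniqueness whenever $(X,y)$ is absolutely continuous with respect to Lebesgue measure and $\nuli(D) \leq n$. Since absolute continuity is granted by assumption, the whole proof reduces to verifying that $\nuli(D) \leq n$ in each of the six listed settings; the stated lower bounds on $n$ are designed precisely to match these nullities.

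\textbf{Nullity of each penalty matrix.} For (i), $\nuli(I)=0$, trivially bounded by $n$. For (ii), the first-difference matrix has $\nul(D)=\spa(\ones)$, giving $\nuli(D)=1$. For (iii), it is standard that the $(k+1)$st order discrete difference operator annihilates exactly the discrete polynomials of degree at most $k$, so $\nuli(D)=k+1$, which matches the hypothesis $n \geq k+1$. For (iv), the graph fused lasso penalty is, up to signs, the edge--vertex incidence matrix of the underlying graph; its null space is spanned by the indicator vectors of the $r$ connected components, so $\nuli(D)=r$ and $n\geq r$ suffices. For (v), the $k$th order graph trend filtering operator of \citet{wang2016trend} alternates powers of the graph Laplacian and the incidence matrix, and one checks that its null space still coincides with the span of the component indicators, so again $\nuli(D)=r$. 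For (vi), the Kronecker trend filtering penalty of \citet{sadhanala2017higher} is built by vertical concatenation of $d$ Kronecker lifts of the one-dimensional $(k+1)$st-order difference operator, so $\nul(D)$ is the tensor product of the one-dimensional kernels along each coordinate; by (iii) each factor has dimension $k+1$, hence $\nuli(D)=(k+1)^d$, matching $n\geq (k+1)^d$.

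\textbf{Anticipated difficulty.} There is no real analytical obstacle, since Theorem~\ref{thm:uniqueness} carries all the heavy lifting. The only step that is more than bookkeeping is the null space identification for graph and Kronecker trend filtering in (v) and (vi); these can be quoted from \citet{wang2016trend} and \citet{sadhanala2017higher} respectively, or verified by hand using the inclusion $\nul(B) \subseteq \nul(AB)$ together with the explicit kernels of the graph Laplacian (component indicators) and of the one-dimensional difference operator (low-degree discrete polynomials). Once those identifications are in hand, the corollary is immediate.
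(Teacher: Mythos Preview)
Your proposal is correct and matches the paper's approach exactly: the paper presents Corollary~\ref{cor:uniqueness} as a direct application of Theorem~\ref{thm:uniqueness}, with each case amounting to the nullity check you describe. Your identifications of $\nuli(D)$ in cases (i)--(vi) are the standard ones, and the cited references supply the facts you would quote for (v) and (vi).
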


Two interesting special cases of the generalized lasso that fall outside the
scope of our results here are {\it additive trend filtering}
\citep{sadhanala2017additive} and {\it varying-coefficient models} 
(which can be cast in a generalized lasso form, see Section 2.2 of
\citet{tibshirani2011solution}).  In either of these problems, the predictor
matrix $X$ has random elements but obeys a particular structure, thus it is not 
reasonable to assume that its entries overall follow a continuous distribution,
so Theorem \ref{thm:uniqueness} cannot be immediately applied.  Still, we
believe that under weak conditions either problem should
have a unique solution.  \citet{sadhanala2017additive} give a uniqueness 
result for additive trend filtering by reducing this problem to lasso form; but, 
keeping this problem in generalized lasso form and carefully
investigating an application of Lemma \ref{lem:d_gp} (the deterministic result 
in this paper leading to Theorem \ref{thm:uniqueness}) may yield a result
with simpler sufficient conditions.  This is left to future work. 

Furthermore, by applying Theorem \ref{thm:uniqueness_glm} to various special
cases for $D$, analogous results hold (for all cases in Corollary
\ref{cor:uniqueness}) when the squared loss is replaced by a generalized linear 
model (GLM) loss $G$ as in \eqref{eq:g_glm}.  In this setting, the assumption
that $(X,y)$ is jointly absolutely continuous is replaced by the two assumptions 
that $X$ is absolutely continous, and $y \notin \cN$, where $\cN$ is the set
defined in \eqref{eq:n}.  The set $\cN$ has Lebesgue measure zero for some
common choices of loss $G$ (see Remark \ref{rem:n}); but unless we somewhat
artificially assume that the distribution of $y|X$ is continuous (this is
artificial because in the two most fundamental GLMs outside of the Gaussian
model, namely the Bernoulli and Poisson models, the entries of $y|X$ are
discrete), the fact that $\cN$ has Lebesgue measure zero set does not directly
imply that the condition $y \notin \cN$ holds almost surely.  Still, it seems
that $y \notin \cN$ should be ``likely''---and hence, uniqueness should be
``likely''---in a typical GLM setup, and making this precise is left to future
work.    

\subsection{Related work}

Several authors have examined uniqueness of solutions in statistical
optimization problems en route to proving risk or recovery properties of these 
solutions; see \citet{donoho2006most,dossal2012necessary} for examples of this
in the noiseless lasso problem (and the analogous noiseless $\ell_0$ penalized
problem); see \citet{nam2013cosparse} for an example in the noiseless
generalized lasso problem; see
\citet{fuchs2005recovery,candes2009near,wainwright2009sharp} for examples   
in the lasso problem; and lastly, see \citet{lee2013model} for an example in the 
generalized lasso problem. These results have a different aim than ours,
i.e., their main goal---a risk or recovery guarantee---is more ambitious 
than certifying uniqueness alone, and thus the conditions they require are  
more stringent.  Our work in this paper is more along the lines of direct
uniqueness analysis in the lasso, as was carried out by \citet{osborne2000lasso, 
rosset2004boosting,tibshirani2013lasso,schneider2017distribution}.

\subsection{Notation and outline}

In terms of notation, for a matrix $A \in \R^{m \times n}$, we write $A^+$ for
its Moore-Penrose pseudoinverse and $\col(A),\row(A),\nul(A),\rank(A)$ for
its column space, row space, null space, and rank, respectively.  We write 
$A_J$ for the submatrix defined by the rows of $A$ indexed by a subset $J 
\subseteq \{1,\ldots,m\}$, and use \smash{$A_{-J}$} as shorthand for
\smash{$A_{\{1,\ldots,m\} \setminus J}$}. Similarly, for a vector $x \in \R^m$,
we write $x_J$ for the subvector defined by the components of $x$ 
indexed by $J$, and use \smash{$x_{-J}$} as shorthand for
\smash{$x_{\{1,\ldots,m\} \setminus J}$}. 

For a set $S \subseteq \R^n$, we write $\spa(S)$ for its linear span, and
write $\aff(S)$ for its affine span.  For a subspace $L \subseteq \R^n$, we  
write $P_L$ for the (Euclidean) projection operator onto $L$, and write 
\smash{$P_{L^\perp}$} for the projection operator onto the
orthogonal complement $L^\perp$.  For a function $f : \R^m \to \R^n$, we 
write $\dom(f)$ for its domain, and $\ran(f)$ for its range.

Here is an outline for what follows. In Section
\ref{sec:preliminaries}, we review important preliminary facts about the
generalized lasso.  In Section \ref{sec:uniqueness}, we derive sufficient 
conditions for uniqueness in \eqref{eq:genlasso}, culminating in Theorem
\ref{thm:uniqueness}, our main result on uniqueness in the squared 
loss case.  In Section \ref{sec:general_loss}, we consider a generalization of 
problem \eqref{eq:genlasso} where the squared loss is replaced by a 
smooth and strictly convex function of $X\beta$; we derive analogs of the
important preliminary facts used in the squared loss case, notably, we
generalize a result on the local stability of generalized lasso solutions due to
\citet{tibshirani2012degrees}; and we give  
sufficient conditions for uniqueness, culminating in Theorem
\ref{thm:uniqueness_glm}, our main result in the general loss case.   In Section
\ref{sec:discussion}, we conclude with a brief discussion.

\section{Preliminaries}
\label{sec:preliminaries}

\subsection{Basic facts, KKT conditions, and the dual} 

First, we establish some basic properties of the generalized lasso problem 
\eqref{eq:genlasso} relating to uniqueness.   

\begin{lemma}
\label{lem:basic}
For any $y,X,D$, and $\lambda \geq 0$, the following holds of the
generalized lasso problem \eqref{eq:genlasso}.
\begin{enumerate}[(i)]
\item There is either a unique solution, or uncountably many solutions.
\item Every solution \smash{$\hbeta$} gives rise to the same fitted value 
  \smash{$X \hbeta$}. 
\item If $\lambda>0$, then every solution \smash{$\hbeta$} gives rise to the 
  same penalty value \smash{$\|D\hbeta\|_1$}. 
\end{enumerate}
\end{lemma}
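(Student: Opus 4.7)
The plan is to use standard convex-analytic facts applied to the objective
$f(\beta) = \tfrac{1}{2}\|y - X\beta\|_2^2 + \lambda \|D\beta\|_1$, observing that $f$ is convex (as the sum of two convex functions) and that the squared-error term is a strictly convex function of the linear image $X\beta$. All three parts follow by exploiting these two features plus the convexity of the solution set.

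For part (i), I would use the fact that the set of minimizers of a convex function is itself convex. Thus if there exist two distinct minimizers $\hbeta_1 \neq \hbeta_2$, every point on the segment $\{(1-t)\hbeta_1 + t\hbeta_2 : t\in[0,1]\}$ is also a minimizer, giving uncountably many solutions. I expect no obstacle here.

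For part (ii), I would argue by contradiction: suppose $\hbeta_1$ and $\hbeta_2$ are both solutions with common optimal value $c^\star$ and with $X\hbeta_1 \neq X\hbeta_2$. Consider the midpoint $\bar\beta = \tfrac{1}{2}\hbeta_1 + \tfrac{1}{2}\hbeta_2$. Because $u \mapsto \tfrac{1}{2}\|y - u\|_2^2$ is strictly convex on $\R^n$ and $X\hbeta_1 \neq X\hbeta_2$, the squared-loss term is strictly smaller at $\bar\beta$ than the average of its values at $\hbeta_1, \hbeta_2$, while the penalty $\|D\beta\|_1$ is merely convex (midpoint value $\leq$ average). Adding these gives $f(\bar\beta) < c^\star$, contradicting optimality. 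The only subtle point is to cleanly separate strict convexity in $X\beta$ (which holds without any assumption on $X$) from the mere convexity of the penalty term; no rank or injectivity hypothesis on $X$ is needed.

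For part (iii), once parts (i)--(ii) are in place it is essentially bookkeeping. For any two solutions $\hbeta_1, \hbeta_2$, parts (ii) and shared optimality give $\tfrac{1}{2}\|y - X\hbeta_1\|_2^2 = \tfrac{1}{2}\|y - X\hbeta_2\|_2^2$, and since the total objective values are equal, subtracting yields $\lambda\|D\hbeta_1\|_1 = \lambda\|D\hbeta_2\|_1$; dividing by $\lambda > 0$ finishes the proof. The hypothesis $\lambda > 0$ is used only in this last division, and the argument is otherwise immediate.

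The main (mild) obstacle across all three parts is really just part (ii): one must be careful to invoke strict convexity of the loss in the variable $X\beta$ rather than in $\beta$ (which need not hold when $X$ is rank-deficient). Once this is handled, the rest is a routine chain of convexity arguments.
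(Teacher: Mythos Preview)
Your arguments for parts (ii) and (iii) are correct and essentially identical to the paper's. Your argument for the dichotomy in part (i) (one solution versus uncountably many, given that a solution exists) is also the same as the paper's.

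However, there is a genuine gap in part (i): you never establish that at least one solution exists. The statement ``either a unique solution, or uncountably many solutions'' is meant to rule out the possibility of zero solutions, and the paper proves this explicitly. (Compare with Lemma~\ref{lem:basic_g} for general losses, where the corresponding statement reads ``zero, one, or uncountably many.'') Existence is not automatic: the criterion $f(\beta)=\tfrac{1}{2}\|y-X\beta\|_2^2+\lambda\|D\beta\|_1$ is continuous and bounded below, but it is not coercive when $\nul(X)\cap\nul(D)\neq\{0\}$, so a direct compactness argument does not apply. The paper closes this gap by a recession-direction argument: since both the loss and the penalty are nonnegative, any recession direction of $f$ must be a recession direction of each term separately, hence must lie in $\nul(X)\cap\nul(D)$; along any such direction $f$ is constant, so $f$ has no direction of strict recession, and Theorem~27.1 of \citet{rockafellar1970convex} then guarantees that the infimum is attained. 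You should add this (or an equivalent) argument to complete part (i).
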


\begin{proof}
The criterion function in the generalized lasso problem \eqref{eq:genlasso} is
convex and proper, as well as closed (being continuous on $\R^p$).
As both $g(\beta)=\|y-X\beta\|_2^2$ and $h(\beta)=\lambda\|D\beta\|_1$ are 
nonnegative, any directions of recession of the criterion $f=g+h$ are
necessarily directions of recession of both $g$ and $h$.  Hence, we see that all 
directions of recession of the criterion $f$ must lie in the common null space 
$\nul(X) \cap \nul(D)$; but these are directions in which the criterion is
constant.  Applying, \eg, Theorem 27.1 in \citet{rockafellar1970convex} tells
us that the criterion attains its infimum, so there is at least one solution in
problem \eqref{eq:genlasso}. Supposing there are two solutions
\smash{$\hbeta^{(1)},\hbeta^{(2)}$}, since the solution set to a convex
optimization problem is itself a convex set, we get that \smash{$t \hbeta^{(1)}
  + (1-t) \hbeta^{(2)}$} is also a solution, for any $t \in [0,1]$.  Thus
if there is more than one solution, then there are uncountably many solutions.
This proves part (i). 

As for part (ii), let \smash{$\hbeta^{(1)},\hbeta^{(2)}$} be two solutions in
\eqref{eq:genlasso}, with \smash{$\hbeta^{(1)} \neq \hbeta^{(2)}$}.  Let
$f^\star$ denote the optimal criterion value in \eqref{eq:genlasso}.  Proceeding
by contradiction, suppose that these two solutions do not yield the same fit,
\ie, \smash{$X \hbeta^{(1)} \neq X \hbeta^{(2)}$}.  Then for any $t \in (0,1)$,
the criterion at \smash{$t \hbeta^{(1)} + (1-t) \hbeta^{(2)}$} is
\begin{align*}
f\big(t \hbeta^{(1)} + (1-t) \hbeta^{(2)}\big) 
&= \frac{1}{2} \big\| y - \big( t X \hbeta^{(1)} + (1-t) X
\hbeta^{(2)} \big) \big\|_2^2 + 
\lambda \big\| D \big( t \hbeta^{(1)} + (1-t) \hbeta^{(2)} \big) \big\|_1 \\    
&< t \frac{1}{2} \| y - X \hbeta^{(1)} \|_2^2 + 
(1-t)  \frac{1}{2} \| y - X \hbeta^{(2)} \|_2^2 +
\lambda t \| D \hbeta^{(1)} \|_1  + (1-t) \lambda \| D \hbeta^{(2)} \|_1 \\    
&= t f(\hbeta^{(1)}) + (1-t) f(\hbeta^{(2)}) = f^\star,
\end{align*}
where in the second line we used the strict convexity of the function 
$G(z)=\|y-z\|_2^2$, along with the convexity of $h(z)=\|z\|_1$.  That \smash{$t   
  \hbeta^{(1)} + (1-t) \hbeta^{(2)}$} obtains a lower criterion than $f^\star$
is a contradiction, and this proves part (ii).

Lastly, for part (iii), every solution in the generalized lasso problem
\eqref{eq:genlasso} yields the same fit by part (ii), leading to the same
squared loss; and since every solution also obtains the same (optimal)
criterion value, we conclude that every solution obtains the same penalty
value, provided that $\lambda>0$. 
\end{proof}

Next, we consider the Karush-Kuhn-Tucker (or KKT) conditions to characterize  
optimality of a solution \smash{$\hbeta$} in problem
\eqref{eq:genlasso}. Since there are no contraints, we simply take a
subgradient of the criterion and set it equal to zero. Rearranging gives
\begin{equation}
\label{eq:stat}
X^T (y - X \hbeta) = \lambda D^T \hgamma, 
\end{equation}
where \smash{$\hgamma \in \R^m$} is a subgradient of the $\ell_1$ norm evaluated 
at \smash{$D\hbeta$}, 
\begin{equation}
\label{eq:subg}
\hgamma_i \in \begin{cases}
\{\sign((D \hbeta)_i )\} & \text{if $(D \hbeta )_i \neq 0$} \\   
[-1, 1] & \text{if $(D \hbeta)_i = 0$}
\end{cases}, \quad \text{for $i=1,\ldots,m$}.   
\end{equation}

Since the optimal fit \smash{$X\hbeta$} is unique by Lemma \ref{lem:basic}, the 
left-hand side in \eqref{eq:stat} is always unique.  This immediately leads to
the next result.

\begin{lemma}
\label{lem:gamma}
For any $y,X,D$, and $\lambda>0$, every optimal subgradient \smash{$\hgamma$} in
problem \eqref{eq:genlasso} gives rise to the same value of \smash{$D^T
  \hgamma$}. Moreover, when $D$ has full row rank, the optimal subgradient
\smash{$\hgamma$} is itself unique.  
\end{lemma}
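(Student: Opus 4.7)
The plan is short and essentially reads off from the KKT stationarity condition together with Lemma \ref{lem:basic}(ii). For any optimal pair $(\hbeta,\hgamma)$ satisfying \eqref{eq:stat}, rearrange to $\lambda D^T \hgamma = X^T y - X^T X \hbeta$. The right-hand side depends on $\hbeta$ only through the fitted value $X\hbeta$, which by Lemma \ref{lem:basic}(ii) is the same across all minimizers; hence the right-hand side is constant over the set of optimal solutions. Since $\lambda>0$, dividing through shows that $D^T \hgamma$ takes a single common value for every optimal subgradient $\hgamma$, proving the first claim.

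For the second claim, the assumption that $D$ has full row rank means its $m$ rows are linearly independent, equivalently the columns of $D^T$ are linearly independent, so the linear map $\gamma \mapsto D^T \gamma$ from $\R^m$ to $\R^p$ is injective. Combined with the first claim, this forces any two optimal subgradients to coincide. The main obstacle is essentially none: once Lemma \ref{lem:basic}(ii) is in hand, both statements reduce to a single application of \eqref{eq:stat} plus an elementary rank argument, and the only thing worth keeping track of is that by ``optimal subgradient'' we mean any $\hgamma$ satisfying \eqref{eq:stat}--\eqref{eq:subg} at some minimizer $\hbeta$, which is exactly the object that the KKT relation constrains.
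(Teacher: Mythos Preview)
Your proof is correct and matches the paper's approach exactly: the paper simply notes that, since $X\hbeta$ is unique by Lemma~\ref{lem:basic}, the left-hand side of \eqref{eq:stat} is unique, and the lemma follows immediately. Your write-up just spells out this one-line argument (and the injectivity of $\gamma\mapsto D^T\gamma$ under full row rank) in slightly more detail.
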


\begin{remark}
When $D$ is row rank deficient, the optimal subgradient \smash{$\hgamma$} is not 
necessarily unique, and thus neither is its associated boundary set (to be
defined in the next subsection).  This complicates the study of uniqueness of
the generalized lasso solution.  In contrast, the optimal subgradient in the
lasso problem is always unique, and its boundary set---called {\it
  equicorrelation set} in this case---is too, which makes the 
study of uniqueness of the lasso solution comparatively simpler
\citep{tibshirani2013lasso}. 
\end{remark}

Lastly, we turn to the dual of problem \eqref{eq:genlasso}.  
Standard arguments in convex analysis, as given in
\citet{tibshirani2011solution}, show that the Lagrangian dual of 
\eqref{eq:genlasso} can be written as\footnote{The form of the 
  dual problem here may superficially appear different from that in 
  \citet{tibshirani2011solution}, but it is equivalent.}
\begin{equation}
\label{eq:dual}
\minimize_{u \in \R^m, \, v \in \R^n} \; \|y-v\|_2^2 \quad\st\quad 
X^T v = D^T u,  \; \|u\|_\infty \leq \lambda.
\end{equation}
Any pair \smash{$(\hu,\hv)$} optimal in the dual \eqref{eq:dual}, and
solution-subgradient pair \smash{$(\hbeta,\hgamma)$} optimal in the primal 
\eqref{eq:genlasso}, \ie, satisfying \eqref{eq:stat}, \eqref{eq:subg}, must
satisfy the primal-dual relationships   
\begin{equation} 
\label{eq:primal_dual}
X\hbeta = y - \hv, \quad\text{and}\quad \hu = \lambda \hgamma.
\end{equation}
We see that \smash{$\hv$}, being a function of the fit
\smash{$X\hbeta$}, is always unique; meanwhile, \smash{$\hu$}, being a function
of the optimal subgradient \smash{$\hgamma$}, is not.  Moreover, the optimality
of \smash{$\hv$} in problem \eqref{eq:dual} can be expressed as
\begin{equation}
\label{eq:dual_proj}
\hv = P_C(y), \quad \text{where} \;\, C = (X^T)^{-1} 
\big(D^T B^m_\infty(\lambda)\big).
\end{equation}
Here, $(X^T)^{-1}(S)$ denotes the preimage of a set $S$ under the linear map 
$X^T$, $D^T S$ denotes the image of a set $S$ under the linear map $D^T$, 
\smash{$B^m_\infty(\lambda) = \{ u \in \R^m : \|u\|_\infty \leq 
  \lambda\}$} is the $\ell_\infty$ ball of radius $\lambda$ in $\R^m$, and 
$P_S(\cdot)$ is the Euclidean projection operator onto a set $S$.
Note that $C$ as defined in \eqref{eq:dual_proj} is a convex polyhedron, 
because the image or preimage of any convex polyhedron under a linear map is a
convex polyhedron.  From \eqref{eq:primal_dual} and \eqref{eq:dual_proj}, we 
may hence write the fit as 
\begin{equation} 
\label{eq:primal_proj}
X\hbeta = (I-P_C)(y),
\end{equation}
the residual from projecting $y$ onto the convex polyhedron $C$.

The conclusion in \eqref{eq:primal_proj}, it turns out, could have been reached
via direction manipulation of the KKT conditions \eqref{eq:stat},
\eqref{eq:subg}, as shown in \citet{tibshirani2012degrees}.  In fact, much of
what can be seen from the dual problem \eqref{eq:dual} can also be
derived using appropriate manipulations of the primal problem
\eqref{eq:genlasso} and its KKT conditions \eqref{eq:stat}, \eqref{eq:subg}.  
However, we feel that the dual perspective, specifically the dual projection in 
\eqref{eq:dual_proj}, offers a simple picture that can be used to
intuitively explain several key results (which might otherwise seem technical 
and complicated in nature).  We will therefore return to it periodically. 

\subsection{Implicit form of solutions}

Fix an arbitrary $\lambda>0$, and let \smash{$(\hbeta,\hgamma)$} denote an 
optimal solution-subgradient pair, \ie, satisfying \eqref{eq:stat},
\eqref{eq:subg}. Following
\citet{tibshirani2011solution,tibshirani2012degrees}, we define the   
{\it boundary set} to contain the indices of components of \smash{$\hgamma$}  
that achieve the maximum possible absolute value,
$$
\cB = \big\{ i \in \{1,\ldots,m\} : | \hgamma_i | = 1 \big\},
$$
and the {\it boundary signs} to be the signs of \smash{$\hgamma$} over the
boundary set, 
$$
s = \sign(\hgamma_\cB).
$$
Since \smash{$\hgamma$} is not necessarily unique, as discussed in the previous
subsection, neither are its associated boundary set and signs $\cB,s$.  Note
that the boundary set contains the {\it active set} 
$$
\cA=\supp(D\hbeta)=\big\{i \in \{1,\ldots,m\} : (D\hbeta)_i \neq  0\big\}
$$
associated with \smash{$\hbeta$}; that $\cB \supseteq \cA$ follows
directly from the property \eqref{eq:subg} (and strict inclusion is certainly 
possible).  Restated, this inclusion tells us that \smash{$\hbeta$} must lie in
the null space of \smash{$D_{-\cB}$}, 
\ie, 
$$
D_{-\cB} \hbeta = 0 \iff \hbeta \in \nul(D_{-\cB}).
$$

Though it seems very simple, the last display provides an avenue for expressing
the generalized lasso fit and solutions in terms of $\cB,s$, which will be quite
useful for establishing sufficient conditions for uniqueness of
the solution.  Multiplying both sides of the stationarity condition
\eqref{eq:stat} by \smash{$P_{\nul(D_{-\cB})}$}, the projection matrix onto  
\smash{$\nul(D_{-\cB})$}, we have
$$
P_{\nul(D_{-\cB})} X^T (y-X\hbeta) = \lambda P_{\nul(D_{-\cB})} D_\cB^T s.
$$
Using \smash{$\hbeta = P_{\nul(D_{-\cB})}\hbeta$}, and solving for the fit 
\smash{$X\hbeta$} (see \citealp{tibshirani2012degrees} for details or the proof
of Lemma \ref{lem:implicit_glm} for the arguments in a more general case) gives  
\begin{equation}
\label{eq:fit}
X\hbeta = XP_{\nul(D_{-\cB})} (XP_{\nul(D_{-\cB})})^+ \big( y- 
\lambda (P_{\nul(D_{-\cB})}X^T)^+ D_\cB^T s\big). 
\end{equation}
Recalling that \smash{$X\hbeta$} is unique from Lemma \ref{lem:basic}, we see
that the right-hand side in \eqref{eq:fit} must agree for all instantiations of
the boundary set and signs $\cB,s$ associated with an optimal subgradient in
problem \eqref{eq:genlasso}.  \citet{tibshirani2012degrees} use this observation
and other arguments to establish an important result that we leverage
later, on the invariance of the space
 \smash{$X \nul(D_{-\cB})=\col(XP_{\nul(D_{-\cB})})$} over all boundary sets
 $\cB$ of optimal subgradients, stated in Lemma \ref{lem:invar} for
 completeness.    

\begin{figure}[htb]
\centering
\includegraphics[width=0.8\textwidth]{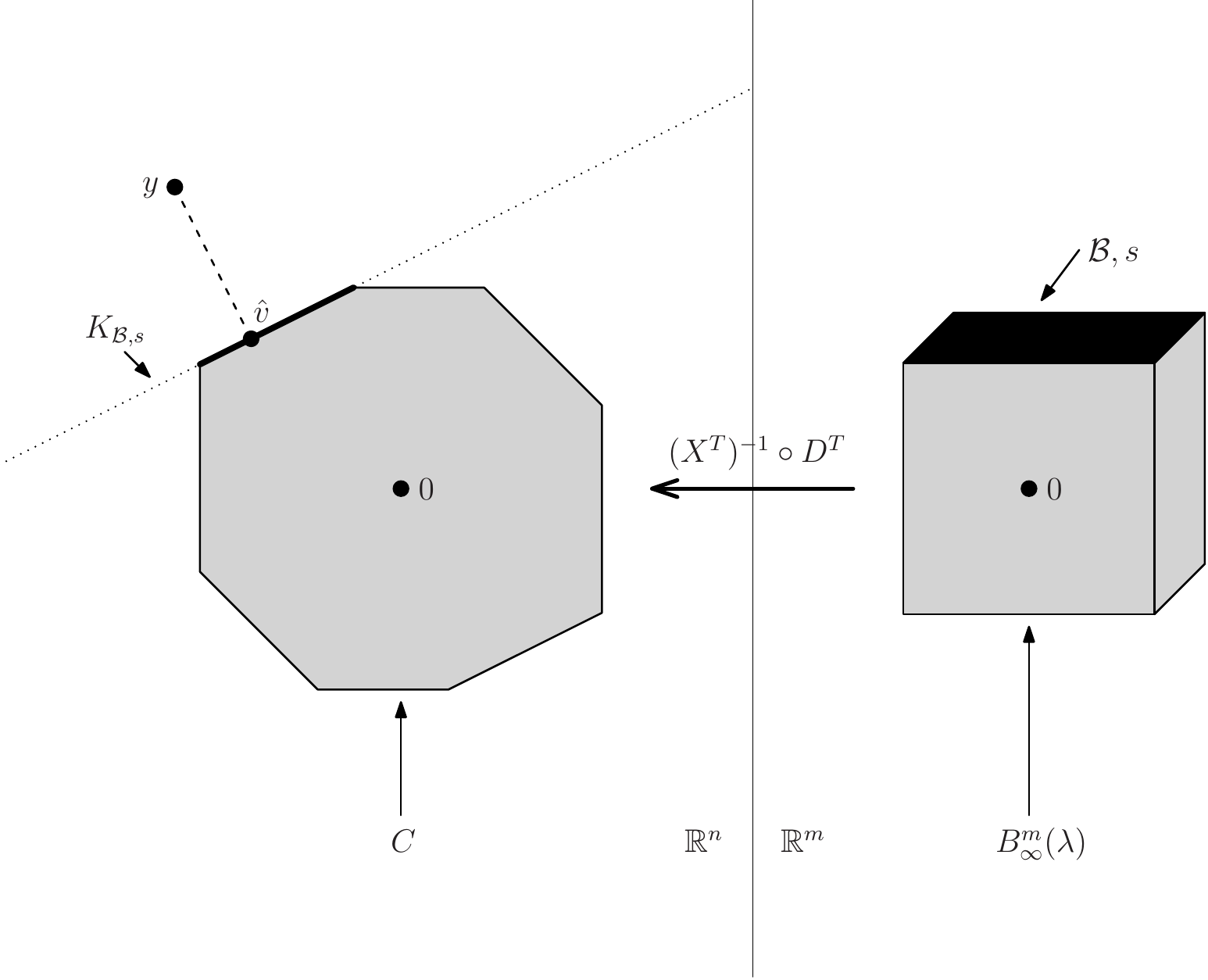}
\caption{\it Geometry of the generalized lasso dual problem \eqref{eq:dual}.
As in \eqref{eq:dual_proj}, the dual solution \smash{$\hv$} may be seen as the
projection of $y$ onto a set $C$, and as in \eqref{eq:primal_proj}, the primal
fit \smash{$X\hbeta$} may be seen as the residual from this projection.
Here, \smash{$C=(X^T)^{-1}(D^T B^m_\infty(\lambda))$}, and as
\smash{$B^m_\infty(\lambda)$} is a polyhedron (and the image or inverse image
of a polyhedron under a linear map is still a polyhedron), $C$ is a
polyhedron as well. This can be used to derive the implicit form \eqref{eq:fit}
for \smash{$X\hbeta$}, based on the face of $C$ on which \smash{$\hv$} lies, as 
explained in Remark \ref{rem:implicit}.}   
\label{fig:geom}
\end{figure}

\begin{remark}
\label{rem:implicit}
As an alternative to the derivation based on the KKT conditions described above, 
the result \eqref{eq:fit} can be argued directly from the geometry
surrounding the dual problem \eqref{eq:dual}.  See Figure \ref{fig:geom} for an
accompanying illustration.  Given that 
\smash{$\hgamma$} has boundary set and signs $\cB,s$, and
\smash{$\hu=\lambda\hgamma$} from \eqref{eq:primal_dual}, 
we see that \smash{$\hu$} must lie on the face of \smash{$B^m_\infty(\lambda)$}
whose affine span is \smash{$E_{\cB,s}=\{u \in \R^m : u_{\cB,s}=\lambda s\}$};
this face is colored in black on the right-hand side of the figure.  Since
\smash{$X^T \hv = D^T \hu$}, this means that \smash{$\hv$} lies on the face of
$C$ whose affine span is \smash{$K_{\cB,s}=(X^T)^{-1} D^T E_{\cB,s}$}; this
face is colored in black on the left-hand side of the figure, and its affine
span \smash{$K_{\cB,s}$} is drawn as a dotted line.  Hence, we may refine our 
view of \smash{$\hv$} in \eqref{eq:dual_proj}, and in turn, \smash{$X\hbeta$} in
\eqref{eq:primal_proj}: namely, we may view \smash{$\hv$} as the projection of
$y$ onto the affine space \smash{$K_{\cB,s}$} (instead of $C$), and the fit 
\smash{$X\hbeta$} as the residual from this affine projection.  
A straightforward 
calculation shows that \smash{$K_{\cB,s} = \lambda (P_{\nul(D_{-\cB})}X^T)^+ D_\cB^T
  s + \nul(P_{\nul(D_{-\cB})}X^T)$}, and another straightforward calculation
shows that the residual from projecting $y$ onto \smash{$K_{\cB,s}$} is
\eqref{eq:fit}.   
\end{remark}

From the expression in \eqref{eq:fit} for the fit \smash{$X\hbeta$}, we also see
that the solution \smash{$\hbeta$} corresponding to the optimal subgradient 
\smash{$\hgamma$} and its boundary set and signs $\cB,s$ must take the form 
\begin{equation}
\label{eq:sol}
\hbeta = (XP_{\nul(D_{-\cB})})^+ \big( y- \lambda (P_{\nul(D_{-\cB})}X^T)^+
D_\cB^T s\big) + b, 
\end{equation}
for some \smash{$b \in \nul(XP_{\nul(D_{-\cB})})$}.  Combining this with
\smash{$b \in \nul(D_{-\cB})$} (following from \smash{$D_{-\cB} \hbeta=0$}), we    
moreover have that \smash{$b \in \nul(X) \cap \nul(D_{-\cB})$}.  In fact,
{\it any} such point \smash{$b \in \nul(X) \cap \nul(D_{-\cB})$} yields a
generalized lasso solution \smash{$\hbeta$} in \eqref{eq:sol} provided that  
$$
s_i \cdot D_i \Big[(XP_{\nul(D_{-\cB})})^+ \big( y- \lambda
(P_{\nul(D_{-\cB})}X^T)^+ D_\cB^T s\big) + b\Big] \geq 0, \quad 
\text{for $i \in \cB$}, 
$$
which says that \smash{$\hgamma$} appropriately matches the signs of the nonzero
components of \smash{$D\hbeta$}, thus \smash{$\hgamma$} remains a proper
subgradient. 

We can now begin to inspect conditions for uniqueness of the generalized lasso
solution.  For a given boundary set \smash{$\cB$} of an optimal subgradient
\smash{$\hgamma$}, 
if we know that \smash{$\nul(X) \cap \nul(D_{-\cB}) = \{0\}$}, 
then there can only be one solution \smash{$\hbeta$} corresponding to
\smash{$\hgamma$} (\ie, such that \smash{$(\hbeta,\hgamma)$} jointly satisfy
\eqref{eq:stat}, \eqref{eq:subg}), and it is given by the expression in
\eqref{eq:sol} with $b=0$.  Further, if we know that \smash{$\nul(X) \cap  
  \nul(D_{-\cB}) = \{0\}$} for {\it all} boundary sets $\cB$ of optimal 
subgradients, and the space \smash{$\nul(D_{-\cB})$} is invariant over all
choices of boundary sets $\cB$ of optimal subgradients, then the right-hand side 
in \eqref{eq:sol} with $b=0$ must agree for all proper instantiations of
$\cB,s$ and it gives the unique generalized lasso solution.  We elaborate on
this in the next section.

\subsection{Invariance of the linear space $X \nul(D_{-\cB})$}  

Before diving into the technical details on conditions for uniqueness in the
next section, we recall a key result from \citet{tibshirani2012degrees}.

\begin{lemma}[Lemma 10 in \citealp{tibshirani2012degrees}]  
\label{lem:invar}
Fix any $X,D$, and $\lambda>0$.  There is a set $\cN
\subseteq \R^n$ of Lebesgue measure zero (that depends on $X,D,\lambda$), such
that for $y \notin \cN$, all boundary sets $\cB$ associated with optimal
subgradients in the generalized lasso problem \eqref{eq:genlasso} give rise to  
the same subspace \smash{$X \nul(D_{-\cB})$}, \ie, there is a single linear
subspace $L \subseteq \R^n$ such that \smash{$L=X \nul(D_{-\cB})$} for all
boundary sets $\cB$ of optimal subgradients.  Moreover, for $y \notin
\cN$, \smash{$L=X \nul(D_{-\cA})$} for all active sets $\cA$ associated
with generalized lasso solutions. 
\end{lemma}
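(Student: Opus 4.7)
The plan is to combine the uniqueness of the fit $X\hbeta$ from Lemma \ref{lem:basic} with the implicit formula \eqref{eq:fit}, which I would rewrite compactly as $X\hbeta = P_{L_\cB}(y - c_{\cB, s})$, where $L_\cB := X\nul(D_{-\cB}) = \col(XP_{\nul(D_{-\cB})})$ is the subspace of interest and $c_{\cB, s} := \lambda (P_{\nul(D_{-\cB})} X^T)^+ D_\cB^T s$ is a vector depending only on $(\cB, s)$ (and on $X, D, \lambda$), not on $y$. The central observation is that if two boundary-set-and-sign pairs $(\cB_1, s_1)$ and $(\cB_2, s_2)$ both arise from optimal subgradients at the same response $y$, then equating their two fit expressions yields the linear condition
$$(P_{L_{\cB_1}} - P_{L_{\cB_2}})(y) = P_{L_{\cB_1}}(c_{\cB_1, s_1}) - P_{L_{\cB_2}}(c_{\cB_2, s_2}).$$

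From here I would argue that whenever $L_{\cB_1} \neq L_{\cB_2}$, the operator $P_{L_{\cB_1}} - P_{L_{\cB_2}}$ is nonzero (orthogonal projections are uniquely determined by their ranges), so the displayed linear equation confines $y$ to a proper affine subspace of $\R^n$, a set of Lebesgue measure zero. Since $\cB_i \subseteq \{1,\ldots,m\}$ and $s_i \in \{-1,+1\}^{|\cB_i|}$ range over a finite set, taking a union of these proper affine subspaces over all tuples $(\cB_1, s_1, \cB_2, s_2)$ with $L_{\cB_1} \neq L_{\cB_2}$ produces a Lebesgue-null set $\cN_1$; for $y \notin \cN_1$, any two boundary sets of optimal subgradients share a common $L$. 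For the ``moreover'' part, I would use that $\cA \subseteq \cB$ for any compatible solution-subgradient pair $(\hbeta, \hgamma)$, which forces $L_\cA \subseteq L_\cB$. If this containment is strict, then $X\hbeta \in L_\cA$ (since $\hbeta \in \nul(D_{-\cA})$) combined with $X\hbeta = P_{L_\cB}(y - c_{\cB, s})$ forces $P_{L_\cB}(y)$ into a proper affine subspace of $L_\cB$, which once again pins $y$ to a proper affine subspace of $\R^n$. A finite union over $(\cA, \cB, s)$ yields a null set $\cN_2$, and I would take $\cN = \cN_1 \cup \cN_2$ to finish the argument.

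The main obstacle is essentially bookkeeping: cataloguing the finite collection of ``bad'' configurations and confirming that $c_{\cB, s}$ is genuinely independent of $y$, so that each piece of the exceptional set is an honest proper affine subspace of $\R^n$ and not a $y$-dependent locus. A secondary subtlety is that different boundary sets can give rise to the same $L_\cB$, but this is harmless since the union runs only over tuples with $L_{\cB_1} \neq L_{\cB_2}$; similarly, one need not worry about whether a given candidate tuple is actually realized by some optimal subgradient, since including extra tuples only enlarges the null set $\cN$.
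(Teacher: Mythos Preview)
Your argument is correct and is in fact more direct than the route taken in the paper (which transcribes Lemma~10 of \citet{tibshirani2012degrees} and, in the general-loss version Lemma~\ref{lem:invar_glm}, reproves it using the same strategy). The paper does not equate the two fit formulas at a single $y$; instead it first invokes the local stability result (Lemma~9 of \citet{tibshirani2012degrees}, generalized here as Lemma~\ref{lem:local_glm}) to guarantee that a given boundary pair $(\cB,s)$ and active pair $(\cA,r)$ persist on a whole neighborhood $U$ of $y$, so that the identity $P_{L_\cB}(y'-c_{\cB,s})=P_{L_\cA}(y'-c_{\cA,r})$ holds for \emph{every} $y'\in U$. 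It then perturbs $y$ along a direction $z$ in one of the two orthogonal complements and deduces that $z$ must lie in the other, giving $L_\cB=L_\cA$. The exceptional set $\cN$ in the paper is therefore the one inherited from the local stability lemma, not the explicit union of affine hyperplanes you write down.

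Your approach buys simplicity: it sidesteps local stability entirely and produces a completely explicit $\cN$ as a finite union of proper affine subspaces, with the key observation that $P_{L_{\cB_1}}-P_{L_{\cB_2}}$ is a nonzero linear map whenever $L_{\cB_1}\neq L_{\cB_2}$. What the paper's approach buys is portability: because it works with the identity on an open set rather than at a point, the same perturbation argument carries over verbatim to the GLM setting of Section~\ref{sec:general_loss}, where the fit is expressed through a nonlinear Bregman projection and your single-point linear equation $(P_{L_{\cB_1}}-P_{L_{\cB_2}})(y)=\text{const}$ no longer arises.
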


\section{Sufficient conditions for uniqueness}
\label{sec:uniqueness}

\subsection{A condition on certain linear independencies}
\label{sec:uniqueness_rough}

We start by formalizing the discussion on uniqueness in the paragraphs 
proceeding \eqref{eq:sol}.  As before, let $\lambda>0$, and let $\cB$ denote
the boundary set associated with an optimal subgradient in
\eqref{eq:genlasso}. Denote by \smash{$U(\cB) \in \R^{p \times k(\cB)}$} a
matrix with linearly independent columns that span \smash{$\nul(D_{-\cB})$}.  It
is not hard to see that 
$$
\nul(X) \cap \nul(D_{-\cB}) = \{0\} \iff
\nul\big(X U(\cB)\big) = \{0\} \iff 
\rank\big(X U(\cB)\big) = k(\cB).
$$
Let us assign now such a basis matrix \smash{$U(\cB) \in \R^{p \times k(\cB)}$}
to each boundary set $\cB$ corresponding to an optimal subgradient in
\eqref{eq:genlasso}.  We claim that there is a unique generalized lasso
solution, as given in \eqref{eq:sol} with $b=0$, provided that the following two
conditions holds:       
\begin{gather}
\label{eq:cond1}
\rank\big(X U(\cB)\big) = k(\cB) \text{ for all boundary sets $\cB$
  associated with optimal subgradients, and} \\ 
\label{eq:cond2}
\nul(D_{-\cB}) \text{ is invariant across all boundary sets $\cB$
  associated with optimal subgradients}.
\end{gather}
To see this, note that if the space \smash{$\nul(D_{-\cB})$} is invariant
across all achieved boundary sets $\cB$ then so is the matrix
\smash{$P_{\nul(D_{-\cB})}$}.  This, and the fact that
\smash{$P_{\nul(D_{-\cB})} D_B^T s = P_{\nul(D_{-\cB})} D^T \hgamma$} where
\smash{$D^T \hgamma$} is unique from Lemma \ref{lem:gamma}, ensures  
that the right-hand side in \eqref{eq:sol} with $b=0$ agrees no matter the
choice of boundary set and signs \smash{$\cB,s$}.

\begin{remark}
For any subset $\cB \subseteq \{1,\ldots,m\}$, and any matrices 
\smash{$U(\cB), \tilde{U}(\cB) \in \R^{p \times  k(\cB)}$} whose columns 
form a basis for \smash{$\nul(D_{-\cB})$}, it is easy to check that  
\smash{$\rank(XU(\cB))=k(\cB) \iff \rank(X\tilde{U}(\cB))=k(\cB)$}.
Therefore condition \eqref{eq:cond1} is well-defined, \ie, it does not depend on 
the choice of basis matrix $U(\cB)$ associated with \smash{$\nul(D_{-\cB})$} for
each boundary set $\cB$.   
\end{remark}

We now show that, thanks to Lemma \ref{lem:invar}, condition \eqref{eq:cond1} 
(almost everywhere) implies \eqref{eq:cond2}, so the former is alone sufficient
for uniqueness.  

\begin{lemma}
\label{lem:invar_cond}
Fix any $X,D$, and $\lambda>0$. For $y \notin \cN$, where $\cN
\subseteq \R^n$ has Lebesgue measure zero as in Lemma \ref{lem:invar},
condition \eqref{eq:cond1} implies \eqref{eq:cond2}.  Hence, for almost every
$y$, condition \eqref{eq:cond1} is itself sufficient to imply uniqueness of the 
generalized lasso solution.
\end{lemma}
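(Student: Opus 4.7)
My plan is to verify condition \eqref{eq:cond2} by taking any two boundary sets $\cB_1, \cB_2$ of optimal subgradients $\hat\gamma_1, \hat\gamma_2$ and showing directly that $\nul(D_{-\cB_1}) = \nul(D_{-\cB_2})$. The pivot is that the set $\Gamma$ of optimal subgradients is convex: by strong duality (Fenchel--Young equality), $\hat\gamma$ is an optimal subgradient if and only if $D^T \hat\gamma$ equals the common value $\eta$ guaranteed by Lemma \ref{lem:gamma} and $\|\hat\gamma\|_\infty \leq 1$. So $\Gamma$ is the intersection of an affine subspace with an $\ell_\infty$ ball, hence convex. Establishing this characterization---that every such $\hat\gamma$ automatically pairs with every primal solution $\hat\beta$ as a KKT pair---will be the main obstacle, and it requires a careful invocation of strong duality (equivalently, the identification $\hat u \in \partial g(D\hat\beta)$ whenever $\hat\beta$ is primal optimal and $\hat u = \lambda\hat\gamma$ is dual optimal).

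With convexity of $\Gamma$ in hand, I form the midpoint $\hat\gamma_3 = \tfrac{1}{2}(\hat\gamma_1 + \hat\gamma_2) \in \Gamma$, with boundary set $\cB_3$. For any index $i$ belonging to exactly one of $\cB_1, \cB_2$---say $i \in \cB_1$ but $i \notin \cB_2$---we have $|\hat\gamma_{1,i}| = 1$ while $|\hat\gamma_{2,i}| < 1$, which forces $|\hat\gamma_{3,i}| < 1$ and so $i \notin \cB_3$. Hence $\cB_3 \subseteq \cB_1 \cap \cB_2$, and since dropping fewer rows from $D$ can only enlarge the null space, $\nul(D_{-\cB_3}) \subseteq \nul(D_{-\cB_1}) \cap \nul(D_{-\cB_2})$.

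To finish, I apply Lemma \ref{lem:invar} together with condition \eqref{eq:cond1}. For $y \notin \cN$, every boundary set $\cB$ associated with an optimal subgradient---in particular each of $\cB_1, \cB_2, \cB_3$, the last qualifying because $\hat\gamma_3 \in \Gamma$---satisfies $X \nul(D_{-\cB}) = L$ for one common subspace $L$. Under \eqref{eq:cond1}, $X$ is injective on each $\nul(D_{-\cB_i})$, so $\dim(\nul(D_{-\cB_i})) = \dim(L)$ for $i = 1, 2, 3$. The inclusions from the previous paragraph, combined with these equal dimensions, then force $\nul(D_{-\cB_3}) = \nul(D_{-\cB_1}) = \nul(D_{-\cB_2})$, which is exactly condition \eqref{eq:cond2}; the uniqueness conclusion then follows from the paragraph preceding the lemma.
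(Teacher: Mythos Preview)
Your proof is correct and takes a genuinely different route from the paper's. The paper argues directly: for each boundary set $\cB$, the restriction $\mathcal{X}:\nul(D_{-\cB})\to L$, $u\mapsto Xu$, is surjective by Lemma~\ref{lem:invar} and injective by \eqref{eq:cond1}, so $\nul(D_{-\cB})=\mathcal{X}^{-1}(L)$; it then declares this invariant in $\cB$. Your argument instead builds an explicit bridge between any two boundary sets: you first characterize the set $\Gamma$ of optimal subgradients as the polytope $\{\gamma:D^T\gamma=\eta,\ \|\gamma\|_\infty\le1\}$ (the Fenchel--Young/complementary-slackness step being exactly what is needed to show every such $\gamma$ really is a valid subgradient at any solution), then use the midpoint $\hat\gamma_3$ to produce a third boundary set $\cB_3\subseteq\cB_1\cap\cB_2$, giving the nesting $\nul(D_{-\cB_3})\subseteq\nul(D_{-\cB_1})\cap\nul(D_{-\cB_2})$; finally the common dimension $\dim L$ (from Lemma~\ref{lem:invar} plus \eqref{eq:cond1}) upgrades the inclusions to equalities.

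What each approach buys: the paper's argument is terse, but its last inference leans on the identification $\nul(D_{-\cB})=\mathcal{X}^{-1}(L)$ in which the map $\mathcal{X}$ itself depends on $\cB$, so the invariance claim is doing real work that is not spelled out. Your midpoint construction makes that step completely explicit---it supplies the missing containment that, together with the dimension count, pins down the subspace uniquely. The cost is that you have to justify convexity of $\Gamma$ via duality, which the paper's approach does not need. Either way, both proofs rest on the same two ingredients, Lemma~\ref{lem:invar} and condition \eqref{eq:cond1}, and your handling of the final ``hence unique'' clause via the discussion preceding the lemma is correct.
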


\begin{proof}
Let $y \notin \cN$, and let $L$ be the linear subspace from
Lemma \ref{lem:invar}, \ie, $L=X\nul(D_{-\cB})$ for any boundary set $\cB$
associated with an optimal subgradient in the generalized lasso problem at $y$.
Now fix a particular boundary set $\cB$ associated with an optimal subgradient
and define the linear map $\mathcal{X} : \nul(D_{-\cB}) \to L$ by
$\mathcal{X}(u)=Xu$.  By construction, this map is surjective.  Moreover,
assuming \eqref{eq:cond1}, it is injective, as 
$$
XU(\cB)a = XU(\cB)b \iff X U(\cB) (a-b)=0,
$$
and the right-hand side cannot be true unless $a=b$.  Therefore, $\mathcal{X}$
is bijective and has a linear inverse, and we may write
\smash{$\nul(D_{-\cB})=\mathcal{X}^{-1}(L)$}.  As $\cB$ was arbitrary, this
shows the invariance of \smash{$\nul(D_{-\cB})$} over all proper choices of
$\cB$, whenever $y \notin \cN$.
\end{proof}

From Lemma \ref{lem:invar_cond}, we see that an (almost everywhere) sufficient 
condition for a unique solution in \eqref{eq:genlasso} is that the
vectors $X U_i(\cB) \in \R^n$, $i=1,\ldots,k(\cB)$ are linearly independent, for
all instantiations of boundary sets $\cB$ of optimal  
subgradients.  This may seem a little circular, to give a condition for
uniqueness that itself is expressed in terms of the subgradients of solutions. 
But we will not stop at \eqref{eq:cond1}, and will derive more explicit
conditions on $y,X,D$, and $\lambda>0$ that imply \eqref{eq:cond1} and
therefore uniqueness of the solution in \eqref{eq:genlasso}.  


\subsection{A refined condition on linear independencies}
\label{sec:uniqueness_refined}

The next lemma shows that when condition \eqref{eq:cond1} fails, 
there is a specific type of linear dependence among the columns of $XU(\cB)$,
for a boundary set $\cB$.  The proof is not difficult, but involves careful
manipulations of the KKT conditions \eqref{eq:stat}, and we defer it until the
appendix.     

\begin{lemma}
\label{lem:pre_gp}
Fix any $X,D$, and $\lambda>0$.  Let $y \notin \cN$, the set of zero
Lebesgue measure as in Lemma \ref{lem:invar}. Assume that $\nul(X) \cap \nul(D) 
= \{0\}$, and that the generalized lasso solution is not unique.  Then there
is a pair of boundary set and signs $\cB,s$ corresponding to an optimal 
subgradient in problem \eqref{eq:genlasso}, such that for any matrix
\smash{$U(\cB) \in \R^{p \times k(\cB)}$} whose columns form a basis for  
\smash{$\nul(D_{-\cB})$}, the following property holds of $Z=XU(\cB)$ and 
\smash{$\tilde{s}=U(\cB)^T D_\cB^T s$}: there exist indices $i_1,\ldots,i_k \in   
\{1,\ldots,k(\cB)\}$ with $k \leq n+1$ and \smash{$\tilde{s}_{i_1} \neq 0$},
such that 
\begin{equation}
\label{eq:case1}
Z_{i_2} \in \spa(\{Z_{i_3},\ldots,Z_{i_k}\}),
\end{equation}
when \smash{$\tilde{s}_{i_2}=\cdots=\tilde{s}_{i_k}=0$}, and
\begin{equation}
\label{eq:case2}
Z_{i_1}/\tilde{s}_{i_1} \in \aff(\{Z_{i_j}/\tilde{s}_{i_j} :  \tilde{s}_{i_j}
\neq 0, \, j \geq 2\}) + \spa(\{Z_{i_j} : \tilde{s}_{i_j} = 0\}),
\end{equation}
when at least one of \smash{$\tilde{s}_{i_2},\ldots,\tilde{s}_{i_k}$} is
nonzero. 
\end{lemma}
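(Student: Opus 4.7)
The plan is to combine the failure of uniqueness with consistency of the KKT stationarity condition \eqref{eq:stat}, in order to extract the asserted dependence among columns of $Z$. First, I would invoke Lemma \ref{lem:invar_cond}: since $y \notin \cN$, if \eqref{eq:cond1} held for every boundary set $\cB$ of an optimal subgradient, then \eqref{eq:cond2} would hold by that lemma, and the discussion following \eqref{eq:sol} would imply uniqueness of the generalized lasso solution. Non-uniqueness therefore forces \eqref{eq:cond1} to fail for some pair $(\cB, s)$ corresponding to an optimal subgradient $\hgamma$ in \eqref{eq:genlasso}; equivalently, $\rank(Z) < k(\cB)$, so I may choose a nonzero $c \in \nul(Z)$.

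Second, I would derive the auxiliary constraint $\tilde{s}^T c = 0$ by premultiplying \eqref{eq:stat} by $U(\cB)^T$. Because $D_{-\cB} U(\cB) = 0$, the contribution of $\hgamma_{-\cB}$ is annihilated, yielding $Z^T Z \alpha = Z^T y - \lambda \tilde{s}$, where $\hbeta = U(\cB) \alpha$. The system is consistent (a solution exists), so $Z^T y - \lambda \tilde{s} \in \col(Z^T Z) = \col(Z^T) = \nul(Z)^\perp$; since $Z^T y$ lies in $\nul(Z)^\perp$ automatically, this forces $\tilde{s} \in \nul(Z)^\perp$, giving $\tilde{s}^T c = 0$ for every $c \in \nul(Z)$.

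Third, Caratheodory applied to the columns of $Z$ lets me shrink $c$ to a support-minimal nonzero null vector whose support $T$ has cardinality $|T| \le \rank(Z) + 1 \le n + 1$ and on which every coordinate of $c$ is nonzero. I would then split into two cases according to whether $\tilde{s}$ vanishes on $T$. In the first case, some $j \in T$ has $\tilde{s}_j \ne 0$; the identity $\sum_{j \in T} c_j \tilde{s}_j = 0$ together with $c_j \ne 0$ on $T$ then forces a second such index. Taking $i_1$ to be one of these and enumerating the remainder of $T$ as $i_2, \ldots, i_k$ (so $k = |T| \le n+1$), I would solve $\sum_{j \in T} c_j Z_j = 0$ for $Z_{i_1}$ and divide by $c_{i_1} \tilde{s}_{i_1}$; the check that the coefficients on $\{Z_{i_j}/\tilde{s}_{i_j} : \tilde{s}_{i_j} \ne 0, \, j \ge 2\}$ sum to one follows immediately from $\sum_{j \in T} c_j \tilde{s}_j = 0$, producing the representation \eqref{eq:case2}. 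In the second case, $\tilde{s}$ vanishes on all of $T$, and labeling the elements of $T$ as $i_2, \ldots, i_k$ and selecting any $i_1 \notin T$ with $\tilde{s}_{i_1} \ne 0$ yields \eqref{eq:case1}.

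The main obstacle is showing, in this second case, that such an index $i_1$ with $\tilde{s}_{i_1} \ne 0$ always exists---equivalently, that $(\cB, s)$ can be chosen so that $\tilde{s} = U(\cB)^T D_\cB^T s$ is not the zero vector. When $D$ has full row rank the subgradient $\hgamma$ is unique (Lemma \ref{lem:gamma}) and this reduces to a direct verification; in the row-rank-deficient case, however, one must exploit the freedom in the optimal subgradient, combined with the invariance guaranteed by $y \notin \cN$ via Lemma \ref{lem:invar}, to select an appropriate $(\cB, s)$. This is where the ``careful manipulations of the KKT conditions'' promised in the lemma statement come in; once resolved, the Caratheodory reduction and casework above complete the proof.
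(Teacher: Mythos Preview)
Your approach is essentially the paper's: both start from the failure of \eqref{eq:cond1} at some optimal $(\cB,s)$, use the KKT identity $Z^T(y-X\hbeta)=\lambda\tilde{s}$ (your consistency argument is the same as the paper's inner product with the residual), and then reduce a null vector of $Z$ to the two cases \eqref{eq:case1}, \eqref{eq:case2}.

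The gap you flag---ensuring $\tilde{s}\neq 0$---is filled in the paper by a short argument that does \emph{not} require switching to a different $(\cB,s)$ or invoking Lemma~\ref{lem:invar} again. It uses only the hypothesis $\nul(X)\cap\nul(D)=\{0\}$ together with Lemma~\ref{lem:basic}(iii). Write $\hbeta=U(\cB)\alpha$ (possible since $\hbeta\in\nul(D_{-\cB})$). If $\tilde{s}=0$, then
\[
\|D\hbeta\|_1=\|D_\cB\hbeta\|_1=s^T D_\cB\hbeta=s^T D_\cB U(\cB)\alpha=\tilde{s}^T\alpha=0.
\]
By Lemma~\ref{lem:basic}(iii) the penalty value is common to all solutions, so every solution satisfies $D\hbeta=0$; two distinct solutions would then differ by a nonzero element of $\nul(X)\cap\nul(D)$, contradicting the hypothesis. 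That is the entire ``careful manipulation.''

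One minor bookkeeping point: in your second case you label $T=\{i_2,\ldots,i_k\}$ and then adjoin an external $i_1$, giving $k=|T|+1$; since a minimal circuit can have $|T|=\rank(Z)+1\le n+1$, this can push $k$ to $n+2$. The paper avoids this by fixing $i_1$ with $\tilde{s}_{i_1}\neq 0$ first and then choosing $i_2,\ldots,i_k$ so that $\{Z_{i_1},\ldots,Z_{i_k}\}$ is dependent with $k\le n+1$ (always possible, since any $n+1$ vectors in $\R^n$ are dependent, and if $k(\cB)\le n$ one simply takes all columns). In case~1 this forces $c_1=0$ and yields \eqref{eq:case1} directly.
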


The spaces on the right-hand sides of both \eqref{eq:case1}, \eqref{eq:case2}
are of dimension at most $n-1$.  To see this, note that
\smash{$\dim(\spa(\{Z_{i_3},\ldots,Z_{i_k}\})) \leq k-2 \leq n-1$}, and also 
$$
\dim\big( \aff(\{Z_{i_j}/\tilde{s}_{i_j} :  \tilde{s}_{i_j}
\neq 0, \, j \geq 2\})\big) +
\dim\big(\spa(\{Z_{i_j} : \tilde{s}_{i_j} = 0\})\big) \leq 
|\mathcal{J}|-2 + |\mathcal{J}^c|=k-2 \leq n-1,
$$
where \smash{$\mathcal{J}=\{ j \in \{1,\ldots,k\} : \tilde{s}_{i_j} \neq 0\}$}.
Hence, because these spaces are at most $(n-1)$-dimensional, neither condition 
\eqref{eq:case1} nor \eqref{eq:case2} should be ``likely'' under a continuous
distribution for the predictor variables $X$.  This is made precise in the next
subsection. 

Before this, we define a deterministic condition on $X$ that ensures
special linear dependencies between the (transformed) columns,
as in \eqref{eq:case1}, \eqref{eq:case2}, never hold. 

\begin{definition}
Fix $D \in \R^{m \times p}$. We say that a matrix $X \in \R^{n \times p}$ is in
{\it $D$-general position} (or {\it $D$-GP}) if the following property holds.
For each subset $\cB \subseteq \{1,\ldots,m\}$ and sign vector \smash{$s \in
  \{-1,1\}^{|\cB|}$}, there is a matrix \smash{$U(\cB) \in \R^{p \times
    k(\cB)}$} whose columns form a basis for \smash{$\nul(D_{-\cB})$}, such that 
for $Z=XU(\cB)$, \smash{$\tilde{s}=U(\cB)^T D_\cB^T s$}, and all $i_1,\ldots,i_k
\in \{1,\ldots,k(\cB)\}$ with \smash{$\tilde{s}_{i_1}\neq 0$} and $k \leq n+1$, 
it holds that
\begin{enumerate}[(i)]
\item $Z_{i_2} \notin \spa(\{Z_{i_3},\ldots,Z_{i_k}\})$, when 
$\tilde{s}_{i_2}=\cdots=\tilde{s}_{i_k}=0$; 
\item $Z_{i_1}/\tilde{s}_{i_1} \notin \aff(\{Z_{i_j}/\tilde{s}_{i_j} :
  \tilde{s}_{i_j} \neq 0, \, j \geq 2\}) +
  \spa(\{Z_{i_j} : \tilde{s}_{i_j} = 0\})$, when at least one of 
$\tilde{s}_{i_2},\ldots,\tilde{s}_{i_k}$ is nonzero.
\end{enumerate}
\end{definition}

\begin{remark}
Though the definition may appear somewhat complicated, a matrix $X$ being in
$D$-GP is actually quite a weak condition, and can hold regardless of the
(relative) sizes of $n,p$.  We will show in the next subsection that it holds
almost surely under an arbitrary continuous probability distribution for the
entries of $X$.  Further, when $X=I$, the above definition essentially
reduces\footnote{We say ``essentially'' here, because our definition of $D$-GP
with $D=I$ allows for a choice of basis matrix $U(\cB)$ for each subset $\cB$,
whereas the standard notion of generally position would mandate (in the notation
of our definition) that $U(\cB)$ be given by the columns of $I$ indexed by
$\cB$.}  to the usual notion of {\it general position} (refer to, \eg,
\citealp{tibshirani2013lasso} for this definition).
\end{remark}

When $X$ is in $D$-GP, we have (by definition) that \eqref{eq:case1},
\eqref{eq:case2} cannot hold for {\it any} $\cB \subseteq \{1,\ldots,m\}$
and $s \in \{-1,1\}^{|\cB|}$ (not just boundary sets and signs); therefore, 
by the contrapositive of Lemma \ref{lem:pre_gp}, if we additionally have
$y \notin \cN$ and $\nul(X) \cap \nul(D) =\{0\}$, then the generalized lasso
solution must be unique.  To emphasize this, we state it as a lemma. 

\begin{lemma}
\label{lem:d_gp}
Fix any $X,D$, and $\lambda>0$.  If $y \notin \cN$, the set of zero
Lebesgue measure as in Lemma \ref{lem:invar}, $\nul(X) \cap \nul(D) =\{0\}$, and
$X$ is in $D$-GP, then the generalized lasso solution is unique.  
\end{lemma}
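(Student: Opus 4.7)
The approach will be a direct contrapositive argument, using Lemma \ref{lem:pre_gp} as the engine and invoking $D$-general position as the structural obstruction. The plan is to suppose for contradiction that $y \notin \cN$, that $\nul(X) \cap \nul(D)=\{0\}$, that $X$ is in $D$-GP, and yet that the generalized lasso solution is not unique. The hypotheses of Lemma \ref{lem:pre_gp} are then satisfied, so that lemma supplies a specific pair of boundary set and signs $\cB, s$ (associated with an optimal subgradient of \eqref{eq:genlasso}) with the following strong property: for \emph{every} choice of basis matrix $U(\cB) \in \R^{p \times k(\cB)}$ for $\nul(D_{-\cB})$, setting $Z = XU(\cB)$ and $\tilde{s}=U(\cB)^T D_\cB^T s$, one can extract indices $i_1,\ldots,i_k \in \{1,\ldots,k(\cB)\}$ with $\tilde{s}_{i_1}\neq 0$ and $k \leq n+1$ for which either \eqref{eq:case1} or \eqref{eq:case2} holds.

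The contradiction is then immediate from the definition of $D$-GP. That definition asserts that for the very same $\cB$ and $s$ (indeed for all subsets and sign vectors), there \emph{exists} a choice of basis matrix $U(\cB)$ such that neither \eqref{eq:case1} nor \eqref{eq:case2} is true for any admissible indices $i_1,\ldots,i_k$. Instantiating Lemma \ref{lem:pre_gp}'s universal statement at precisely the basis matrix supplied by the $D$-GP definition produces a direct contradiction, so the non-uniqueness assumption must fail. The only subtle point to flag, rather than a genuine obstacle, is the quantifier alignment: Lemma \ref{lem:pre_gp} gives a conclusion holding for \emph{all} bases of $\nul(D_{-\cB})$, while $D$-GP only requires the avoidance property for \emph{some} basis; but this mismatch works in our favor, since the universal quantifier in Lemma \ref{lem:pre_gp} forces a clash with whatever witness basis $D$-GP produces. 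No additional manipulation of the KKT conditions or the projection formula \eqref{eq:sol} is required, since the technical heavy lifting has been absorbed into Lemma \ref{lem:pre_gp}.
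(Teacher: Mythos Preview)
Your proposal is correct and essentially identical to the paper's own argument: the paper likewise observes that Lemma~\ref{lem:pre_gp} applied contrapositively, together with the $D$-GP condition ruling out \eqref{eq:case1} and \eqref{eq:case2} for every $\cB,s$, immediately forces uniqueness. Your explicit remark about the quantifier alignment (universal over bases in Lemma~\ref{lem:pre_gp} versus existential in the $D$-GP definition) is a clarification the paper leaves implicit.
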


\subsection{Absolutely continuous predictor variables}
\label{sec:uniqueness_x_cont}

We give an important result that shows the $D$-GP condition is met 
almost surely for continuously distributed predictors.  There are no
restrictions on the relative sizes of $n,p$.  The proof of the next result uses
elementary probability arguments and is deferred until the appendix.

\begin{lemma}
\label{lem:x_cont_gp}
Fix $D \in \R^{m \times p}$, and assume that the entries of $X \in \R^{n \times
  p}$ are drawn from a distribution that is absolutely continuous with respect 
to $(np)$-dimensional Lebesgue measure. Then $X$ is in $D$-GP almost surely.   
\end{lemma}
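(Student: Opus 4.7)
The plan is to enumerate over the finitely many choices of $\cB \subseteq \{1,\ldots,m\}$, sign vector $s \in \{-1,1\}^{|\cB|}$, and index tuple $(i_1,\ldots,i_k)$ with $k \leq n+1$, show that each individual ``bad event'' in (i) or (ii) of the $D$-GP definition has Lebesgue measure zero under the distribution of $X$, and take a finite union bound.

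First, for each $\cB$ I fix a deterministic basis matrix $U(\cB) \in \R^{p \times k(\cB)}$ for $\nul(D_{-\cB})$ (not depending on $X$); then $\tilde s = U(\cB)^T D_\cB^T s$ is a fixed vector of $\R^{k(\cB)}$, and the columns of $Z = XU(\cB)$ are $X u_{i_j}$, where $u_{i_j}$ denotes the $i_j$th column of $U(\cB)$. I then aim to reduce both (i) and (ii) to a common rank condition on $X$ applied to a deterministic matrix. Case (i) says directly that $X[u_{i_2},\ldots,u_{i_k}]$ has rank less than $k-1$, and since $u_{i_2},\ldots,u_{i_k}$ are columns of the basis matrix $U(\cB)$, the matrix $[u_{i_2},\ldots,u_{i_k}] \in \R^{p \times (k-1)}$ has full column rank $k-1$. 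For case (ii), I pick a pivot index $j_0 \in \{2,\ldots,k\}$ with $\tilde s_{i_{j_0}} \neq 0$ (which exists by hypothesis) and define $v_0 = u_{i_1} - (\tilde s_{i_1}/\tilde s_{i_{j_0}}) u_{i_{j_0}}$ and, for $j \in \{2,\ldots,k\} \setminus \{j_0\}$, $v_j = u_{i_j} - (\tilde s_{i_j}/\tilde s_{i_{j_0}}) u_{i_{j_0}}$. A short calculation shows that the bad event in (ii) is equivalent to $Xv_0 \in \spa(\{Xv_j : j \neq 0, j_0\})$, and inspecting the coefficient patterns in the basis $u_{i_1},\ldots,u_{i_k}$ (namely, that $v_0$ has a $1$ in the $u_{i_1}$-coordinate while the others have $0$ there, and each $v_j$ has a $1$ in its $u_{i_j}$-coordinate) shows that the $k-1$ vectors $v_0, \{v_j\}$ are linearly independent. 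So case (ii) also reduces to the statement that a fixed $p \times (k-1)$ matrix $M$ of rank $k-1$ satisfies $\rank(XM) < k-1$.

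The key linear-algebraic ingredient is then: for any deterministic $M \in \R^{p \times r}$ with $\rank(M) = r \leq n$, one has $\P(\rank(XM) < r) = 0$ whenever $X$ has an absolutely continuous distribution on $\R^{n \times p}$. To see this, extend $M$ to an invertible $\tilde M \in \R^{p \times p}$; since the map $X \mapsto X\tilde M$ is a linear automorphism of $\R^{n \times p}$, the pushforward of the distribution of $X$ is again absolutely continuous, and hence the marginal distribution of the first $r$ columns $XM$ is absolutely continuous on $\R^{n \times r}$. The rank-deficient set $\{A \in \R^{n \times r} : \rank(A) < r\}$ is an algebraic variety cut out by the vanishing of all $r \times r$ minors; when $r \leq n$ this is a proper subvariety of $\R^{nr}$ and so has Lebesgue measure zero. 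Since $k - 1 \leq n$ in both cases, this applies.

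The main obstacle is the reparameterization in case (ii): recognizing that the affine-plus-linear-span condition on $Z_{i_1}/\tilde s_{i_1}$ is, after choosing the pivot $j_0$, just a rank-deficiency statement for $X$ applied to a deterministic rank-$(k-1)$ matrix, and verifying that the shifted vectors $v_0, v_j$ remain linearly independent. Once this reduction is in place, the proof concludes by union-bounding over the finitely many choices of $\cB$, $s$, and $(i_1,\ldots,i_k)$.
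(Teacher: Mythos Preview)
Your argument is correct and takes a genuinely different route from the paper's proof. The paper exploits a \emph{specific} choice of basis: it picks $U(\cB)$ in Gauss--Jordan form $\begin{bmatrix} I \\ F \end{bmatrix}$ (after permuting columns of $X$), so that each transformed column $Z_i = X_i + \sum_\ell X_{\ell+k(\cB)} F_{\ell i}$ depends on the raw column $X_i$ and no other $Z_j$ does. Then, conditioning on $X_j$, $j \neq i_2$ (or $j \neq i_1$ in case~(ii)), the bad event becomes ``a single column of $X$ lies in a fixed affine subspace of dimension at most $n-1$'', which has conditional probability zero; integrating out finishes. You instead keep the basis arbitrary and reduce both cases to a single clean statement---$\rank(XM) < k-1$ for a deterministic rank-$(k-1)$ matrix $M$---which you dispatch with a change-of-variables plus algebraic-variety argument. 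Your reduction of case~(ii) via the pivot $j_0$ and the shifted vectors $v_0,\{v_j\}$ is a nice device that the paper does not spell out (it only says ``similar arguments'' for case~(ii)). The paper's conditioning argument is perhaps more elementary in that it avoids invoking zero measure of algebraic varieties, while your approach is more modular (the lemma ``$\P(\rank(XM)<r)=0$ for fixed full-rank $M$'' is reusable) and does not need the Gauss--Jordan normalization or column permutation of $X$.
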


We now present a result showing that the base condition $\nul(X) \cap \nul(D) =
\{0\}$ is met almost surely for continuously distributed predictors, provided
that $p \leq n$, or $p > n$ and the null space of $D$ is not too large.  Its
proof is elementary and found in the appendix.

\begin{lemma}
\label{lem:x_cont_null}
Fix $D \in \R^{m \times p}$, and assume that the entries of $X \in \R^{n \times 
  p}$ are drawn from a distribution that is absolutely continuous with respect
to $(np)$-dimensional Lebesgue measure.  If either $p \leq n$, or $p>n$ and  
$\nuli(D) \leq n$, then $\nul(X) \cap \nul(D) = \{0\}$ almost surely. 
\end{lemma}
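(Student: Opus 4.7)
The plan is to reformulate the null-space intersection condition in terms of the rank of a specific matrix, and then use the standard fact that a non-vanishing polynomial in the entries of a matrix is nonzero almost surely under any absolutely continuous distribution.

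First, let $k = \nuli(D)$ and choose $V \in \R^{p \times k}$ whose columns form a basis for $\nul(D)$. I would begin by observing that
\[
\nul(X) \cap \nul(D) = \{0\} \iff \nul(XV) = \{0\} \iff \rank(XV) = k,
\]
since any element of $\nul(X) \cap \nul(D)$ can be written uniquely as $V a$ for some $a \in \R^k$. So the lemma reduces to showing that $XV \in \R^{n \times k}$ has full column rank almost surely.

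Next, I would unify the two hypotheses by noting that in either case we have $k \leq n$: in case $p \leq n$, $k = \nuli(D) \leq p \leq n$; in case $p > n$, the assumption $\nuli(D) \leq n$ is exactly this bound. Having $k \leq n$ is what makes the reduction tractable, because $XV$ being $n \times k$ with $n \geq k$ can be of full column rank. Then I would pick a $k$-element subset $J \subseteq \{1,\ldots,p\}$ such that $V_J \in \R^{k \times k}$ is invertible (which exists since $V$ has rank $k$), and consider the polynomial $q : \R^{n \times p} \to \R$ defined by $q(X) = \det((XV)_{\{1,\ldots,k\}})$, the determinant of the top $k \times k$ block of $XV$. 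This is a polynomial in the entries of $X$.

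To show that $q$ is not identically zero, I would exhibit a specific $X_0 \in \R^{n \times p}$ for which $q(X_0) \neq 0$: take $X_0$ whose $i$th row is $e_{j_i}^T \in \R^p$ for $i=1,\ldots,k$ (where $J = \{j_1, \ldots, j_k\}$) and whose remaining rows are arbitrary (say zero). Then the top $k \times k$ block of $X_0 V$ is precisely $V_J$, whose determinant is nonzero by choice of $J$. The classical fact that a nonzero polynomial in $\R^N$ has zero set of $N$-dimensional Lebesgue measure zero then gives $\P(q(X) = 0) = 0$ under the absolute continuity of the law of $X$, and on the complementary event $XV$ has a nonsingular $k \times k$ minor, hence full column rank. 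The only real care needed is the reduction step and verifying that the witness $X_0$ exists, which is straightforward given $k \leq n$; no essential obstacle is expected.
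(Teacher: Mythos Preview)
Your proof is correct, but it takes a different route from the paper. The paper reformulates $\nul(X)\cap\nul(D)=\{0\}$ as linear independence of the columns of the stacked matrix $M=\begin{bmatrix}X\\D\end{bmatrix}$, then argues separately: for $p\leq n$ the columns of $X$ alone are a.s.\ linearly independent; for $p>n$ it picks $r=\rank(D)$ columns of $D$ that are linearly independent and argues that the remaining $p-r=\nuli(D)\leq n$ columns of $M$ are a.s.\ independent via their $X$-parts. Your approach instead reduces to $\rank(XV)=k$ for a fixed basis $V$ of $\nul(D)$, unifies the two hypotheses into the single inequality $k\leq n$, and gives an explicit nonvanishing-polynomial argument with a concrete witness $X_0$. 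Your route is cleaner and more self-contained: it avoids the case split, makes the measure-zero step explicit via a determinant polynomial, and does not appeal to arguments from other lemmas. The paper's route, by contrast, is closer to the intuition of ``enough independent directions between $X$ and $D$,'' though as written it is terser and leans on arguments elsewhere in the paper.
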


Putting together Lemmas \ref{lem:d_gp}, \ref{lem:x_cont_gp},
\ref{lem:x_cont_null} gives our main result on the uniqueness of the generalized
lasso solution.    

\begin{theorem}
\label{thm:uniqueness}
Fix any $D$ and $\lambda>0$.  Assume the joint distribution of $(X,y)$ is 
absolutely continuous with respect to $(np+n)$-dimensional Lebesgue
measure.  If $p \leq n$, or else $p>n$ and $\nuli(D) \leq n$, then the
solution in the generalized lasso problem \eqref{eq:genlasso} is unique almost
surely.  
\end{theorem}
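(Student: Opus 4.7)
The plan is to combine the three lemmas immediately preceding the theorem. Lemma \ref{lem:d_gp} says that uniqueness of the generalized lasso solution is implied by the conjunction of three events: (a) $y \notin \cN$, where $\cN \subseteq \R^n$ is the Lebesgue-null set of Lemma \ref{lem:invar} (which depends on $X,D,\lambda$); (b) $\nul(X) \cap \nul(D) = \{0\}$; and (c) $X$ is in $D$-GP. The strategy is to show that under the hypotheses each of (a), (b), (c) holds almost surely, so their intersection does too.

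Since the joint distribution of $(X,y)$ is absolutely continuous on $\R^{np+n}$, Fubini's theorem gives that the marginal distribution of $X$ is absolutely continuous on $\R^{np}$. Applying Lemma \ref{lem:x_cont_gp} then yields $\P(X \text{ is in } D\text{-GP}) = 1$, so (c) holds almost surely. Applying Lemma \ref{lem:x_cont_null}, under either of the stated size conditions $p \leq n$ or ($p > n$ and $\nuli(D) \leq n$), gives $\P(\nul(X) \cap \nul(D) = \{0\}) = 1$, so (b) holds almost surely as well.

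For (a), observe that by Fubini applied to the joint density of $(X,y)$, the conditional distribution of $y$ given $X$ is absolutely continuous with respect to $n$-dimensional Lebesgue measure for almost every $X$. For each fixed $X$ the set $\cN = \cN(X, D, \lambda)$ has Lebesgue measure zero by Lemma \ref{lem:invar}, hence
\[
\P(y \in \cN \mid X) = \int_{\cN} f_{y \mid X}(t) \, dt = 0 \quad \text{for a.e.\ } X,
\]
and integrating over the marginal of $X$ yields $\P(y \in \cN) = 0$, so (a) holds almost surely. Taking the intersection of the three almost-sure events and applying Lemma \ref{lem:d_gp} pointwise on this intersection delivers uniqueness of the generalized lasso solution almost surely.

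The only subtle point—and the closest thing to an obstacle here—is ensuring that the event $\{y \in \cN(X)\}$ is measurable in $(X,y)$, so that the conditional integral above is justified. This can be handled by inspecting the construction of $\cN$ in the proof of Lemma \ref{lem:invar} to confirm joint measurability in $(X,y)$, or alternatively by bounding $\P(y \in \cN(X))$ by the outer measure, which vanishes by the Fubini argument above regardless of measurability. Either route completes the proof.
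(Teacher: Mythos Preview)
Your proposal is correct and follows exactly the paper's approach: the paper simply states that Theorem~\ref{thm:uniqueness} follows by ``putting together Lemmas~\ref{lem:d_gp}, \ref{lem:x_cont_gp}, \ref{lem:x_cont_null},'' and you have spelled out precisely how those pieces fit, including the Fubini/conditioning argument for $\P(y\in\cN)=0$ that the paper leaves implicit. The measurability remark is a reasonable bit of extra care the paper does not address.
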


\begin{remark}
\label{rem:full_row_rank}
If $D$ has full row rank, then by Lemma \ref{lem:gamma} the optimal subgradient
\smash{$\hgamma$} is unique and so the boundary set $\cB$ is also unique.  In
this case, condition \eqref{eq:cond2} is vacuous and condition \eqref{eq:cond1}
is sufficient for uniqueness of the generalized lasso solution for every $y$
(\ie, we do not need to rely on Lemma \ref{lem:invar_cond}, which in turn uses
Lemma \ref{lem:invar}, to prove that \eqref{eq:cond1} is sufficient for almost
every $y$).  Hence, in this case, the condition in Theorem \ref{thm:uniqueness}
that $y|X$ has an absolutely continuous distribution is not needed, and (with
the other conditions in place) uniqueness holds for every $y$, almost surely
over $X$. Under this (slight) sharpening, Theorem \ref{thm:uniqueness} with
$D=I$ reduces to the result in Lemma 4 of \citet{tibshirani2013lasso}. 
\end{remark}

\begin{remark}
Generally speaking, the condition that $\nuli(D) \leq n$ in Theorem
\ref{thm:uniqueness} (assumed in the case $p > n$) is not strong.  In many 
applications of the generalized lasso, the dimension of the null space of $D$ is
small and fixed (\ie, it does not grow with $n$).  For example, recall Corollary
\ref{cor:uniqueness}, where the lower bound $n$ in each of the cases reflects
the dimension of the null space. 
\end{remark}

\subsection{Standardized predictor variables}

A common preprocessing step, in many applications of penalized modeling such as
the generalized lasso, is to {\it standardize} the predictors $X \in \R^{n
  \times p}$, meaning, center each column to have mean 0, and then scale each
column to have norm 1. Here we show that our main uniqueness results carry over,  
mutatis mutandis, to the case of standardized predictor variables.  All proofs
in this subsection are deferred until the appendix.

We begin by studying the case of centering alone.  Let $M = I-\ones\ones^T/n
\in \R^{n \times n}$ be the centering map, and consider the {\it centered
  generalized lasso} problem  
\begin{equation}
\label{eq:genlasso_centered}
\minimize_{\beta \in \R^p} \; \frac{1}{2} \| y - M X \beta \|_2^2 + 
\lambda \| D \beta \|_1. 
\end{equation}
We have the following uniqueness result for centered predictors.

\begin{corollary}
\label{cor:uniqueness_centered}
Fix any $D$ and $\lambda>0$.  Assume the distribution of $(X,y)$ is
absolutely continuous with respect to $(np+n)$-dimensional Lebesgue
measure. If $p \leq n-1$, or $p>n-1$ and $\nuli(D) \leq n-1$, then the
solution in the centered generalized lasso problem \eqref{eq:genlasso_centered}
is unique almost surely.   
\end{corollary}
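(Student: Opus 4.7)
The plan is to reduce the centered problem \eqref{eq:genlasso_centered} to a standard generalized lasso of the form \eqref{eq:genlasso} on $n-1$ samples, and then invoke Theorem \ref{thm:uniqueness}.  The centering matrix $M = I - \ones\ones^T/n$ is symmetric and idempotent with rank $n-1$, so it factors as $M = Q^T Q$, where $Q \in \R^{(n-1) \times n}$ has orthonormal rows spanning $\ones^\perp$.  Using the orthogonal decomposition $y = My + (I - M)y$ (with $(I-M)y$ orthogonal to $\col(M)$) and the fact that $\|Q^T v\|_2 = \|v\|_2$ for $v \in \R^{n-1}$, I would write
\[
\tfrac{1}{2}\|y - MX\beta\|_2^2 \;=\; \tfrac{1}{2}\|(I-M)y\|_2^2 + \tfrac{1}{2}\|Qy - QX\beta\|_2^2.
\]
Since the first term does not involve $\beta$, the centered problem \eqref{eq:genlasso_centered} has the same solution set as the standard generalized lasso \eqref{eq:genlasso} with response $y' = Qy \in \R^{n-1}$, predictor matrix $X' = QX \in \R^{(n-1) \times p}$, penalty $D$, and tuning parameter $\lambda$.

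Next, I would verify that $(X', y')$ has a distribution absolutely continuous with respect to $((n-1)p + (n-1))$-dimensional Lebesgue measure.  Extend $Q$ to an orthogonal matrix $\tilde Q \in \R^{n \times n}$ by appending, \eg, the row $\ones^T/\sqrt{n}$.  Then the map $(X, y) \mapsto (\tilde Q X, \tilde Q y)$ is an invertible linear bijection of $\R^{n \times p} \times \R^n$ onto itself, so $(\tilde Q X, \tilde Q y)$ inherits absolute continuity with respect to $(np+n)$-dimensional Lebesgue measure from $(X,y)$.  The pair $(X', y')$ is obtained by dropping the last row of $\tilde Q X$ and the last entry of $\tilde Q y$, i.e., it is a marginal of $(\tilde Q X, \tilde Q y)$; marginals of absolutely continuous laws are absolutely continuous with respect to Lebesgue measure of the appropriate dimension, so $(X', y')$ is absolutely continuous on $\R^{(n-1)p + (n-1)}$.

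Applying Theorem \ref{thm:uniqueness} to $(X', y', D, \lambda)$ with sample size $n' = n - 1$ then yields almost sure uniqueness of the solution in the reduced problem whenever $p \leq n - 1$, or $p > n - 1$ and $\nuli(D) \leq n - 1$; this is precisely the hypothesis of the corollary, and the unique solution of the reduced problem is also the unique solution of \eqref{eq:genlasso_centered}.  The only mildly subtle step is the middle paragraph: because the map $(X, y) \mapsto (QX, Qy)$ is not dimension-preserving, one cannot directly invoke a "linear bijections preserve absolute continuity" argument, and the cleanest fix is to first change variables via the invertible $\tilde Q$ and then take a marginal.  Everything else is routine.
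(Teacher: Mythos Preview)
Your proof is correct and follows essentially the same approach as the paper: the paper introduces an orthogonal matrix $V=[\,V_1\;V_{-1}\,]$ with $V_1=\ones/\sqrt{n}$ and $V_{-1}$ spanning $\col(M)$, so that $V_{-1}^T$ plays exactly the role of your $Q$, and then reduces \eqref{eq:genlasso_centered} to a standard generalized lasso with response $V_{-1}^T y$ and predictors $V_{-1}^T X$ before invoking Theorem~\ref{thm:uniqueness}. Your treatment of the absolute continuity of $(QX,Qy)$ is more explicit than the paper's (which simply appeals to ``integration and change of variables''), but the argument is the same in spirit.
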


\begin{remark}
The exact same result as stated in Corollary \ref{cor:uniqueness_centered} holds
for the generalized lasso problem with intercept
\begin{equation}
\label{eq:genlasso_int}
\minimize_{\beta_0 \in \R, \, \beta \in \R^p} \; \frac{1}{2} \| y - \beta_0 \ones - 
X \beta \|_2^2 +  \lambda \| D \beta \|_1. 
\end{equation}
This is because, by minimizing over $\beta_0$ in problem
\eqref{eq:genlasso_int}, we find that this problem is equivalent to minimization
of     
$$
\frac{1}{2} \| M y - M X \beta \|_2^2 +  \lambda \| D \beta \|_1 
$$
over $\beta$, which is just a generalized lasso problem with response
\smash{$V_{-1}^T y$} and predictors \smash{$V_{-1}^T X$}, where the notation
here is as in the proof of Corollary \ref{cor:uniqueness_centered}. 
\end{remark}

Next we treat the case of scaling alone.  Let
\smash{$W_X=\diag(\|X_1\|_2,\ldots,\|X_p\|_2) \in \R^{p \times p}$}, and
consider the {\it scaled generalized lasso} problem
\begin{equation}
\label{eq:genlasso_scaled}
\minimize_{\beta \in \R^p} \; \frac{1}{2} \| y - X W_X^{-1} \beta \|_2^2 +   
\lambda \| D \beta \|_1. 
\end{equation}
We give a helper lemma, on the distribution of a continuous random vector,
post scaling. 

\begin{lemma}
\label{lem:z_scaled}
Let $Z \in \R^n$ be a random vector whose distribution is absolutely continuous 
with respect to $n$-dimensional Lebesgue measure.  Then, the distribution of
$Z/\|Z\|_2$ is absolutely continuous with respect to $(n-1)$-dimensional
Hausdorff measure restricted to the $(n-1)$-dimensional unit sphere,
$\S^{n-1}=\{x \in \R^n : \|x\|_2=1\}$. 
\end{lemma}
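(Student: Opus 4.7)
The plan is to reduce the claim to the standard polar-coordinate decomposition of Lebesgue measure on $\R^n$, from which a density for $Z/\|Z\|_2$ with respect to surface measure on $\S^{n-1}$ (equivalently, $\cH^{n-1}|_{\S^{n-1}}$) falls out by Fubini.

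First I would dispense with degenerate cases. Since $\{0\}$ has $n$-dimensional Lebesgue measure zero, absolute continuity of $Z$ gives $\P(Z=0)=0$, so $Z/\|Z\|_2$ is almost surely well-defined. The case $n=1$ is trivial: $\S^0=\{-1,+1\}$ is a two-point set and $0$-dimensional Hausdorff measure is counting measure, so every distribution on $\S^0$ is absolutely continuous with respect to it. Hence I may assume $n \geq 2$ in what follows.

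For $n \geq 2$, I would use the polar map $\phi:(0,\infty)\times \S^{n-1}\to \R^n\setminus\{0\}$ given by $\phi(r,\theta)=r\theta$. This is a smooth bijection, and the standard change-of-variables calculation shows that Lebesgue measure on $\R^n\setminus\{0\}$ pulled back through $\phi$ equals $r^{n-1}\,dr\otimes d\sigma(\theta)$, where $\sigma$ denotes surface measure on $\S^{n-1}$; surface measure in turn coincides with the restriction of $(n-1)$-dimensional Hausdorff measure $\cH^{n-1}$ to $\S^{n-1}$ (a standard identification in geometric measure theory). Letting $f$ denote the Lebesgue density of $Z$, for any Borel $A\subseteq \S^{n-1}$ Fubini's theorem yields
$$
\P\big(Z/\|Z\|_2\in A\big)=\int_{\phi((0,\infty)\times A)}f(z)\,dz=\int_A\Big(\int_0^\infty f(r\theta)\,r^{n-1}\,dr\Big)\,d\sigma(\theta).
$$
This exhibits an explicit Radon--Nikodym density $g(\theta)=\int_0^\infty f(r\theta)\,r^{n-1}\,dr$ of the law of $Z/\|Z\|_2$ with respect to $\cH^{n-1}|_{\S^{n-1}}$, proving the claim.

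There is essentially no obstacle here beyond bookkeeping; the only mildly delicate point is identifying surface measure on $\S^{n-1}$ with $(n-1)$-dimensional Hausdorff measure, which I would handle with a citation rather than a computation. Alternatively, one can prove the conclusion even more cleanly in contrapositive form: if $A\subseteq \S^{n-1}$ has $\cH^{n-1}(A)=0$, then writing the cone over $A$ as $\bigcup_{k\geq 1}\phi([1/k,k]\times A)$ and applying the polar formula shows this cone has Lebesgue measure zero, hence $\P(Z/\|Z\|_2\in A)\leq \P(Z\in \mathrm{cone}(A))=0$.
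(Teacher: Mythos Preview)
Your proof is correct. Your primary argument goes further than the paper's: you use the polar decomposition of Lebesgue measure together with Fubini to exhibit an explicit Radon--Nikodym density $g(\theta)=\int_0^\infty f(r\theta)\,r^{n-1}\,dr$ for the law of $Z/\|Z\|_2$. The paper instead argues only the contrapositive, essentially your closing ``alternative'': it identifies spherical measure with normalized Lebesgue measure of cones (citing \citet{mattila1995geometry} for the equivalence of the Hausdorff and cone definitions), then for a null set $S\subseteq\S^{n-1}$ writes $\cone(S)=\bigcup_{k\geq 1}\cone_k(S)$ and uses countable subadditivity to conclude $\cL^n(\cone(S))=0$, hence $\P(Z/\|Z\|_2\in S)=\P(Z\in\cone(S))=0$. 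So the two approaches are close in spirit---both hinge on relating surface measure on $\S^{n-1}$ to Lebesgue measure on cones---but yours is more constructive (it yields the density), while the paper's avoids Fubini and the explicit polar Jacobian at the cost of a citation for the equivalence of the two descriptions of spherical measure. Your handling of the degenerate cases $Z=0$ and $n=1$ is also a nice touch the paper omits.
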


We give a second helper lemma, on the $(n-1)$-dimensional Hausdorff measure 
of an affine space intersected with the unit sphere $\S^{n-1}$ (which is
important for checking that the scaled predictor matrix is in $D$-GP, because
here we must check that none of its columns lie in a finite union of affine
spaces).  

\begin{lemma}
\label{lem:s_cap_a}
Let $A \subseteq \R^n$ be an arbitrary affine space, with $\dim(A) \leq n-1$. 
Then $\S^{n-1} \cap A$ has $(n-1)$-dimensional Hausdorff measure zero. 
\end{lemma}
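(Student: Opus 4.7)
The plan is to parametrize $\S^{n-1} \cap A$ explicitly and show it is either empty, a single point, or a smooth embedded submanifold of $\R^n$ of dimension at most $n-2$; in each case its $(n-1)$-dimensional Hausdorff measure must vanish.

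To set this up, write $A = p + L$, where $L \subseteq \R^n$ is the $d$-dimensional linear subspace parallel to $A$ (with $d = \dim(A) \leq n-1$) and $p \in A$ is the point of $A$ closest to the origin. By the projection characterization of the closest point, $p$ is orthogonal to every element of $L$, so for any $v \in L$,
\[
\|p + v\|_2^2 = \|p\|_2^2 + \|v\|_2^2 = r^2 + \|v\|_2^2,
\]
where $r = \|p\|_2$.

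Now split into three cases. If $r > 1$, every point of $A$ lies strictly outside $\S^{n-1}$, so $\S^{n-1} \cap A = \emptyset$. If $r = 1$, then $\|p+v\|_2 = 1$ forces $v = 0$, so $\S^{n-1} \cap A = \{p\}$, a single point. If $r < 1$, then $p + v \in \S^{n-1}$ iff $\|v\|_2 = \sqrt{1-r^2}$, and hence $\S^{n-1} \cap A$ equals $p$ plus a $(d-1)$-dimensional sphere of radius $\sqrt{1-r^2}$ sitting inside $L$; this is a smooth embedded submanifold of $\R^n$ of dimension $d - 1 \leq n - 2$.

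In all three cases $\S^{n-1} \cap A$ has Hausdorff dimension at most $n-2$, and since $n-2 < n-1$ we conclude $\mathcal{H}^{n-1}(\S^{n-1} \cap A) = 0$ by the standard fact that $\mathcal{H}^k$ vanishes on sets of Hausdorff dimension strictly less than $k$. The only step requiring any care is the third case, but once the decomposition $p \perp L$ is in place, the map $v \mapsto p + v$ identifies the intersection with a round $(d-1)$-sphere, whose $(n-1)$-dimensional Hausdorff measure is zero purely by dimension count; no serious obstacle arises.
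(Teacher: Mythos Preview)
Your proof is correct and takes a genuinely different, more elementary route than the paper. The paper argues indirectly: it observes that $\S^{n-1}\cap A=\relbd(\B^n\cap A)$, reduces (without loss of generality) to the case $\dim(A)=n-1$, identifies $A$ with $\R^{n-1}$ via an orthonormal change of basis, and then invokes the general theorem that the boundary of a convex set has Lebesgue measure zero (citing \citet{lang1986note}), before lifting back to $\R^n$ via a Lipschitz map. By contrast, you exploit the special structure of the sphere directly: choosing $p\in A$ orthogonal to the linear part $L$ lets you compute $\S^{n-1}\cap A$ explicitly as either $\emptyset$, a point, or a round $(d-1)$-sphere, all of which have Hausdorff dimension at most $n-2$. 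Your approach is shorter and fully self-contained, avoiding any appeal to the convex-boundary theorem; the paper's approach, on the other hand, would generalize immediately if $\B^n$ were replaced by an arbitrary convex body, whereas yours relies on the explicit quadratic form of the sphere.
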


We present a third helper lemma, which establishes that for absolutely
continuous $X$, the scaled predictor matrix \smash{$X W_X^{-1}$} is 
in $D$-GP and satisfies the appropriate null space condition, almost surely.

\begin{lemma}
\label{lem:x_scaled}
Fix $D \in \R^{m \times p}$, and assume that $X \in \R^{n \times
  p}$ has entries drawn from a distribution that is absolutely continuous with
respect to $(np)$-dimensional Lebesgue measure. Then \smash{$X W_X^{-1}$} is in
$D$-GP almost surely. Moreover, if $p \leq n$, or $p > n$ and $\nuli(D) \leq
n$, then \smash{$\nul(X W_X^{-1}) \cap \nul(D) = \{0\}$} almost surely.
\end{lemma}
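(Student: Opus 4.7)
The plan is to mirror the proofs of Lemmas \ref{lem:x_cont_gp} and \ref{lem:x_cont_null}, but transferred to the product of spheres $(\S^{n-1})^p$ via a column-wise polar decomposition of $X$. Since $X$ is absolutely continuous, each column $X_i$ is a.s.\ nonzero, so $W_X$ is a.s.\ invertible and $\tilde{X} := X W_X^{-1}$ is well-defined. Writing $X_i = r_i \theta_i$ with $r_i = \|X_i\|_2$ and $\theta_i = X_i/r_i \in \S^{n-1}$, the change-of-variables formula shows that $(r,\theta)$ is absolutely continuous on $(0,\infty)^p \times (\S^{n-1})^p$ with respect to $\prod_i dr_i \otimes \prod_i d\sigma(\theta_i)$, where $\sigma$ denotes $(n-1)$-dimensional Hausdorff measure on $\S^{n-1}$ (the joint analog of Lemma \ref{lem:z_scaled}). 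Marginalizing over $r$, the columns $\theta_1,\ldots,\theta_p$ of $\tilde{X}$ are jointly absolutely continuous with respect to the product surface measure.

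For the $D$-GP claim, I would show that the set of $\theta \in (\S^{n-1})^p$ for which some $D$-GP condition fails is contained in a finite union of product-surface-measure-zero subsets. Fix a subset $\cB$, sign pattern $s$, basis matrix $U(\cB)$ of $\nul(D_{-\cB})$, and admissible tuple $i_1,\ldots,i_k$; each column $Z_{i_j} = \tilde{X}\, U_{\cdot,i_j}(\cB) = \sum_\ell U_{\ell,i_j}(\cB)\,\theta_\ell$ is linear in $\theta$, so failure of (i) or (ii) corresponds to the vanishing of a polynomial in $\theta$ (appropriate minors and determinantal expressions). The crucial step is to verify that each such polynomial is not identically zero on $(\S^{n-1})^p$; this is done by exhibiting a witness $\theta^\star$ at which the $D$-GP condition holds, using the fact that the columns $U_{\cdot,i_2}(\cB),\ldots,U_{\cdot,i_k}(\cB)$ are linearly independent in $\R^p$ (as part of a basis), so a generic $\theta^\star$ produces linearly independent $Z_{i_j}$'s in $\R^n$. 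Once nontriviality is in hand, conditioning on all but one $\theta_\ell$ reduces the failure locus to a nontrivial affine slice of $\S^{n-1}$, which has $\sigma$-measure zero by Lemma \ref{lem:s_cap_a}; Fubini then yields product measure zero on $(\S^{n-1})^p$, and a union bound over the finitely many admissible tuples gives $\tilde{X} \in D$-GP almost surely.

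For the null space claim, let $U$ be a basis matrix for $\nul(D)$ of dimension $d = \nuli(D)$. Then $\nul(\tilde{X}) \cap \nul(D) = \{0\}$ iff $\tilde{X} U \in \R^{n \times d}$ has full column rank. If $p \leq n$, invertibility of $W_X$ transfers the a.s.\ full column rank of $X$ to $\tilde{X}$, so $\nul(\tilde{X}) = \{0\}$. If $p > n$ with $d \leq n$, the condition amounts to nonvanishing of $\det((\tilde{X} U)^T \tilde{X} U)$, a polynomial in $\theta$ which is not identically zero on $(\S^{n-1})^p$ (one can construct a witness by selecting $d$ linearly independent rows of $U$ and setting the corresponding $\theta_\ell$'s to be orthonormal vectors in $\R^n$). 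The same Fubini/Lemma \ref{lem:s_cap_a} argument then closes the case.

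The main obstacle is the witness construction for the $D$-GP polynomials: for each admissible tuple, one must exhibit a $\theta^\star$ where the corresponding condition holds. This reduces to the same combinatorial linear-algebra step appearing in the proof of Lemma \ref{lem:x_cont_gp} for unscaled predictors; since $\S^{n-1}$ spans $\R^n$, any linear independence configuration achievable in $\R^n$ can be realized on the sphere after normalization, so the witnesses from the unscaled argument transfer with only cosmetic modifications.
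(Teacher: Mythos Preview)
Your proposal is essentially correct in spirit but takes a substantially more elaborate route than the paper, and one step you flag as the ``main obstacle'' is actually unnecessary. The paper's proof avoids polynomials and witness construction entirely. It simply reuses the Gauss--Jordan basis $U(\cB)$ from the proof of Lemma~\ref{lem:x_cont_gp}, so that among $Z_1,\ldots,Z_{k(\cB)}$ only $Z_i$ depends on the $i$th column $\tilde X_i$. Then, conditioning on $X_j$, $j\neq i_2$, the failure event becomes $\tilde X_{i_2}\in A$ for a \emph{fixed} affine space $A\subseteq\R^n$ of dimension at most $n-1$; Lemma~\ref{lem:z_scaled} (applied conditionally) gives that $\tilde X_{i_2}$ is absolutely continuous on $\S^{n-1}$, and Lemma~\ref{lem:s_cap_a} gives that $\S^{n-1}\cap A$ has surface-measure zero. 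No nontriviality check or witness is needed, because the affine space automatically has dimension $\leq n-1$ by construction. For the null-space claim the paper is even shorter: it observes that column scaling by nonzero constants preserves linear independence of any collection of columns, so the proof of Lemma~\ref{lem:x_cont_null} applies verbatim to $\tilde X$.

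Your framework would also work, but two points deserve care. First, the claim that ``conditioning on all but one $\theta_\ell$ reduces the failure locus to an affine slice'' is only true if you pick the basis $U(\cB)$ so that a single $\theta_\ell$ enters a single $Z_i$; with an arbitrary basis, several $Z_{i_j}$'s may depend on the same $\theta_\ell$, and the conditional failure locus is then an algebraic variety rather than an affine set, so Lemma~\ref{lem:s_cap_a} would not directly apply. Second, the witness argument you sketch---transferring a configuration from $\R^n$ to $\S^{n-1}$ ``after normalization''---is not automatic, since normalizing the columns of $X$ changes $XU(\cB)$ in a nontrivial way (it is not a column rescaling of $Z$). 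Both issues evaporate once you adopt the Gauss--Jordan basis and condition as the paper does, at which point your argument collapses to the paper's.
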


Combining Lemmas \ref{lem:d_gp}, \ref{lem:x_scaled} gives the following
uniqueness result for scaled predictors.  

\begin{corollary}
\label{cor:uniqueness_scaled}
Fix any $D$ and $\lambda>0$.  Assume the distribution of $(X,y)$ is
absolutely continuous with respect to $(np+n)$-dimensional Lebesgue
measure. If $p \leq n$, or else $p>n$ and $\nuli(D) \leq n$, then the
solution in the scaled generalized lasso problem \eqref{eq:genlasso_scaled} 
is unique almost surely.    
\end{corollary}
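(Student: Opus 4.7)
The statement explicitly tells us how to proceed: the proof is a combination of Lemma \ref{lem:d_gp} and Lemma \ref{lem:x_scaled}, applied to the effective predictor matrix $X W_X^{-1}$. The plan is to verify, with probability one under the joint distribution of $(X,y)$, the three hypotheses of Lemma \ref{lem:d_gp} for the problem \eqref{eq:genlasso_scaled}: namely that $X W_X^{-1}$ is in $D$-GP, that $\nul(X W_X^{-1}) \cap \nul(D) = \{0\}$, and that $y \notin \cN$, where $\cN \subseteq \R^n$ is the Lebesgue-null ``exceptional'' set from Lemma \ref{lem:invar} associated with the triple $(X W_X^{-1}, D, \lambda)$.

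First I would invoke Lemma \ref{lem:x_scaled}: since $X$ is absolutely continuous with respect to $(np)$-dimensional Lebesgue measure (this is a marginal consequence of the joint absolute continuity of $(X,y)$, via Fubini), that lemma gives directly that $X W_X^{-1}$ is in $D$-GP almost surely, and that the null space condition $\nul(X W_X^{-1}) \cap \nul(D) = \{0\}$ holds almost surely under the additional sample-size/nullity hypothesis in our statement.

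The main step (and the only mildly delicate one) is handling the third requirement $y \notin \cN$, because the set $\cN = \cN(X W_X^{-1}, D, \lambda)$ depends on the random matrix $X$. The approach is to disintegrate: since the joint law of $(X, y)$ is absolutely continuous on $\R^{np+n}$, there is a joint density $f(x,y)$, and for almost every $x$ in the support of the marginal of $X$, the conditional law of $y \mid X = x$ is absolutely continuous on $\R^n$. For each fixed $x$, the set $\cN(x W_x^{-1}, D, \lambda) \subseteq \R^n$ has $n$-dimensional Lebesgue measure zero by Lemma \ref{lem:invar}, hence $\P\{y \in \cN(X W_X^{-1}, D, \lambda) \mid X = x\} = 0$ for almost every $x$. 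Taking expectations and applying Fubini yields $\P\{y \in \cN(X W_X^{-1}, D, \lambda)\} = 0$.

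Finally, intersecting the three almost-sure events—that $X W_X^{-1}$ is in $D$-GP, that the null-space intersection is trivial, and that $y \notin \cN$—and invoking Lemma \ref{lem:d_gp} on the generalized lasso problem with predictor matrix $X W_X^{-1}$ concludes that the solution in \eqref{eq:genlasso_scaled} is unique almost surely. The only real subtlety, as noted, is the measurability and Fubini step needed to handle the $X$-dependence of $\cN$; the rest is just bookkeeping.
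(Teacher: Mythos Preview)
Your proposal is correct and follows essentially the same approach as the paper, which simply says the result follows by combining Lemmas \ref{lem:d_gp} and \ref{lem:x_scaled}. You have supplied more detail than the paper does, in particular the conditioning/Fubini argument for handling the $X$-dependence of $\cN$, which the paper leaves implicit here (and also in the proof of Theorem \ref{thm:uniqueness}).
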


Finally, we consider the {\it standardized generalized lasso} problem,
\begin{equation}
\label{eq:genlasso_standard}
\minimize_{\beta \in \R^p} \; \frac{1}{2} \| y - M X W_{MX}^{-1} \beta \|_2^2 +    
\lambda \| D \beta \|_1,
\end{equation}
where, note, the predictor matrix \smash{$MXW_{MX}^{-1}$} has standardized 
columns, \ie, each column has been centered to have mean 0, then scaled to have
norm 1.   We have the following uniqueness result.

\begin{corollary}
\label{cor:uniqueness_standard}
Fix any $D$ and $\lambda>0$.  Assume the distribution of $(X,y)$ is
absolutely continuous with respect to $(np+n)$-dimensional Lebesgue
measure. If $p \leq n-1$, or $p>n-1$ and $\nuli(D) \leq n-1$, then the
solution in the standardized generalized lasso problem
\eqref{eq:genlasso_standard} is unique almost surely.   
\end{corollary}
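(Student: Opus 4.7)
My plan is to reduce the standardized generalized lasso problem \eqref{eq:genlasso_standard} to a scaled generalized lasso problem \eqref{eq:genlasso_scaled} in ambient dimension $n-1$, and then invoke Corollary \ref{cor:uniqueness_scaled}. This simply splices together the two manipulations that are already used separately for the centered and scaled cases.

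Concretely, I would first introduce $V_{-1} \in \R^{n \times (n-1)}$ whose orthonormal columns span $\ones^\perp$, so that $M = V_{-1}V_{-1}^T$. Completing $V_{-1}$ to an orthogonal matrix $V = [\ones/\sqrt{n},\, V_{-1}]$ and inserting the isometry $V^T$ inside the squared-loss term, the component of the residual in the direction of $\ones$ produces a $\beta$-independent constant $(\ones^T y)^2/n$, so \eqref{eq:genlasso_standard} is equivalent in $\beta$ to
\begin{equation*}
\minimize_{\beta \in \R^p} \; \tfrac{1}{2} \|V_{-1}^T y - V_{-1}^T X\, W_{MX}^{-1}\beta\|_2^2 + \lambda\|D\beta\|_1.
\end{equation*}
The second step is to notice that $W_{MX} = W_{V_{-1}^T X}$, since $\|MX_j\|_2 = \|V_{-1}V_{-1}^T X_j\|_2 = \|V_{-1}^T X_j\|_2$ by orthonormality of the columns of $V_{-1}$. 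Writing $\tilde{y} = V_{-1}^T y$ and $\tilde{X} = V_{-1}^T X$, the displayed problem is exactly the scaled generalized lasso \eqref{eq:genlasso_scaled} with response $\tilde{y} \in \R^{n-1}$ and predictor matrix $\tilde{X} \in \R^{(n-1)\times p}$.

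The final step is to transfer the continuity hypothesis. The linear map $(X,y) \mapsto (V_{-1}^T X,\, V_{-1}^T y)$ is a surjection $\R^{np+n} \to \R^{(n-1)(p+1)}$ (it applies $V_{-1}^T$ separately to each column of $X$ and to $y$, and $V_{-1}^T$ has full row rank), so the pushforward of an absolutely continuous distribution remains absolutely continuous, and $(\tilde{X},\tilde{y})$ is absolutely continuous with respect to $(n-1)(p+1)$-dimensional Lebesgue measure. Applying Corollary \ref{cor:uniqueness_scaled} to the reduced problem, with $n$ replaced by $n-1$, then gives uniqueness almost surely under exactly the stated conditions. I do not expect any real obstacle: the centering splitting and the identity $W_{MX} = W_{V_{-1}^T X}$ are short calculations, and preservation of absolute continuity under a surjective linear map is standard, so the only thing to watch is getting the dimension bookkeeping right ($n \to n-1$) when quoting Corollary \ref{cor:uniqueness_scaled}.
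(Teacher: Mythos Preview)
Your proposal is correct and essentially identical to the paper's own proof: the paper also introduces $V_{-1}$ with orthonormal columns spanning $\ones^\perp$, rewrites the criterion via the isometry $V^T$ to obtain a scaled generalized lasso problem with response $V_{-1}^T y$ and predictor $V_{-1}^T X$, uses the same identity $W_{MX}=W_{V_{-1}^T X}$, notes that $(V_{-1}^T X, V_{-1}^T y)$ inherits absolute continuity, and then applies Corollary~\ref{cor:uniqueness_scaled} in dimension $n-1$.
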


\section{Smooth, strictly convex loss functions}
\label{sec:general_loss}

\subsection{Generalized lasso with a general loss}

We now extend some of the preceding results beyond the case of squared error
loss, as considered previously.  In particular, we consider the problem 
\begin{equation}
\label{eq:genlasso_g}
\minimize_{\beta \in \R^p} \; G(X\beta; y) + \lambda \| D \beta \|_1,
\end{equation}
where we assume, for each $y \in \R^n$, that the function $G(\,\cdot\,; y)$ is 
{\it essentially smooth} and {\it essentially strictly convex} on $\R^n$.  These
two conditions together mean that $G(\,\cdot\,; y)$ is a closed proper convex
function, differentiable and strictly convex on the interior of its domain
(assumed to be nonempty), with the norm of its gradient approaching $\infty$
along any sequence approaching the boundary of its domain. 
A function that is essentially smooth and essentially strictly convex is 
also called, according to some authors, of {\it Legendre type}; see Chapter
26 of \citet{rockafellar1970convex}.   An important special case of a Legendre
function is one that is differentiable and strictly convex, with full domain  
(all of $\R^n$).     

For much of what follows, we will focus on loss functions of the form 
\begin{equation}
\label{eq:g_glm}
G(z;y) = -y^T z + \psi(z),
\end{equation}
for an essentially smooth and essentially strictly convex function $\psi$ on 
$\R^n$ (not depending on $y$). This is a weak restriction on $G$ and
encompasses, \eg, the cases in which $G$ is the negative log-likelihood
function from a generalized linear model (GLM) for the entries of $y|X$
with a canonical link function, where $\psi$ is the cumulant generating
function. In the case of, say, Bernoulli or Poisson models, this is
$$
G(z;y) = -y^T z + \sum_{i=1}^n \log(1+e^{z_i}), 
\quad \text{or} \quad G(z;y) = -y^T z + \sum_{i=1}^n e^{z_i},
$$
respectively. For brevity, we will often write the loss function as $G(X\beta)$,
hiding the dependence on the response vector $y$.  

\subsection{Basic facts, KKT conditions, and the dual}

The next lemma follows from arguments identical to those for Lemma 
\ref{lem:basic}.

\begin{lemma}
\label{lem:basic_g}
For any $y,X,D,\lambda \geq 0$, and for $G$ essentially smooth and
essentially strictly convex, the following holds of problem
\eqref{eq:genlasso_g}.   
\begin{enumerate}[(i)]
\item There is either zero, one, or uncountably many solutions.
\item Every solution \smash{$\hbeta$} gives rise to the same fitted value 
  \smash{$X \hbeta$}. 
\item If $\lambda>0$, then every solution \smash{$\hbeta$} gives rise to the 
  same penalty value \smash{$\|D\hbeta\|_1$}. 
\end{enumerate}
\end{lemma}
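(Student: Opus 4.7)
The plan is to proceed exactly as in the proof of Lemma \ref{lem:basic}, with two modifications to account for the fact that $G(\,\cdot\,;y)$ need not be globally defined nor globally strictly convex. The criterion $f(\beta) = G(X\beta;y) + \lambda\|D\beta\|_1$ is a sum of closed, proper, convex functions and hence is itself closed, proper, and convex. Unlike the squared-loss case, a recession-based existence argument no longer succeeds: the domain of $G(\,\cdot\,;y)$ may be a proper subset of $\R^n$, and $G$ may diverge at that boundary without any recession direction for $f$ being present. This is why part (i) must allow for zero solutions. The convexity portion of (i), however, is unchanged: if two distinct solutions $\hbeta^{(1)},\hbeta^{(2)}$ exist, the whole segment $\{t\hbeta^{(1)} + (1-t)\hbeta^{(2)} : t \in [0,1]\}$ lies in the (necessarily convex) solution set, yielding uncountably many solutions.

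For part (ii), the plan is to reuse the strict-convexity-based contradiction from the proof of Lemma \ref{lem:basic}, but this requires care: essential strict convexity only guarantees strict convexity of $G(\,\cdot\,;y)$ on convex subsets of $\inte(\dom(G(\,\cdot\,;y)))$. The first step is to show that any solution $\hbeta$ of \eqref{eq:genlasso_g} must have $X\hbeta \in \inte(\dom(G(\,\cdot\,;y)))$. This follows from essential smoothness: $\partial G(\,\cdot\,;y)$ is empty on $\bd(\dom(G(\,\cdot\,;y)))$, whereas the KKT stationarity condition for \eqref{eq:genlasso_g} demands the existence of some $g \in \partial G(X\hbeta;y)$ satisfying $X^T g + \lambda D^T \hgamma = 0$ for a suitable subgradient $\hgamma$ of $\|\cdot\|_1$ at $D\hbeta$, thereby forcing $X\hbeta$ into the interior of the domain. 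Once this is established, if two solutions $\hbeta^{(1)},\hbeta^{(2)}$ yielded distinct fits, the entire segment joining $X\hbeta^{(1)}$ and $X\hbeta^{(2)}$ would lie in $\inte(\dom(G(\,\cdot\,;y)))$, where $G(\,\cdot\,;y)$ is strictly convex in the ordinary sense (see \citet{rockafellar1970convex}). The same Jensen-type computation as in the proof of Lemma \ref{lem:basic} then produces a strict convex combination with criterion value strictly below $f^\star$, a contradiction.

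Part (iii) then follows from (i) and (ii) exactly as in the proof of Lemma \ref{lem:basic}: by (ii) every solution yields the same loss value $G(X\hbeta;y)$, and by (i) every solution attains the common optimal criterion $f^\star$, so when $\lambda>0$ the penalties $\|D\hbeta\|_1$ must coincide across solutions. The main obstacle I anticipate is justifying the claim in part (ii) that $X\hbeta \in \inte(\dom(G(\,\cdot\,;y)))$ for every solution---this hinges on the interplay between essential smoothness and the subdifferential sum rule for the composite criterion, which is standard convex analysis but must be stated carefully to make the otherwise verbatim transcription of the proof of Lemma \ref{lem:basic} go through.
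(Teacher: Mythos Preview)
Your proposal is correct and follows the same approach as the paper, which simply asserts that the arguments are ``identical to those for Lemma~\ref{lem:basic}.'' In fact you are more careful than the paper: you correctly flag that essential strict convexity only guarantees strict convexity on $\inte(\dom(G(\,\cdot\,;y)))$, and that one must first use essential smoothness (via Theorem~26.1 of \citet{rockafellar1970convex}, which gives $\partial G(\,\cdot\,;y)=\emptyset$ off the interior) together with the subdifferential sum and chain rules to place $X\hbeta$ in that interior before the Jensen contradiction can be invoked---a point the paper leaves implicit.
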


Note the difference between Lemmas \ref{lem:basic_g} and
\ref{lem:basic}, part (i): for an arbitrary (essentially smooth and essentially 
strictly convex) $G$, the criterion in \eqref{eq:genlasso_g} need not attain its
infimum, whereas the criterion in \eqref{eq:genlasso} always does.  This happens 
because the criterion in \eqref{eq:genlasso_g} can have directions of strict
recession (\ie, directions of recession in which the criterion is not constant),
whereas the citerion in \eqref{eq:genlasso} cannot.  Thus in general, 
problem \eqref{eq:genlasso_g} need not have a solution; this is true even in the
most fundamental cases of interest beyond squared loss, \eg, the case of a
Bernoulli negative log-likelihood $G$. Later in Lemma \ref{lem:dual_g}, we
give a sufficient condition for the existence of solutions in
\eqref{eq:genlasso_g}. 

The KKT conditions for problem \eqref{eq:genlasso_g} are 
\begin{equation}
\label{eq:stat_g}
- X^T \nabla G(X \hbeta) = \lambda D^T \hgamma,
\end{equation}
where \smash{$\hgamma \in \R^m$} is (as before) a subgradient of the $\ell_1$
norm evaluated at \smash{$D\hbeta$}, 
\begin{equation}
\label{eq:subg_g}
\hgamma_i \in \begin{cases}
\{\sign((D \hbeta)_i )\} & \text{if $(D \hbeta )_i \neq 0$} \\  
[-1, 1] & \text{if $(D \hbeta)_i = 0$}
\end{cases}, \quad \text{for $i=1,\ldots,m$}.   
\end{equation}
As in the squared loss case, uniqueness of \smash{$X\hbeta$} by
Lemma \ref{lem:basic_g}, along with \eqref{eq:stat_g}, imply the next result.   

\begin{lemma}
\label{lem:gamma_g}
For any $y,X,D,\lambda>0$, and $G$ essentially smooth and essentially 
strictly convex, every optimal subgradient \smash{$\hgamma$} in problem
\eqref{eq:genlasso_g} gives rise to the same value of \smash{$D^T \hgamma$}.  
Furthermore, when $D$ has full row rank, the optimal subgradient
\smash{$\hgamma$} is unique, assuming that problem \eqref{eq:genlasso_g} has a
solution in the first place.
\end{lemma}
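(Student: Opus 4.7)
The plan is to mimic the argument for Lemma \ref{lem:gamma} in the squared loss case, with the stationarity equation \eqref{eq:stat} replaced by its analog \eqref{eq:stat_g}. From \eqref{eq:stat_g} we have $-X^T \nabla G(X\hbeta) = \lambda D^T \hgamma$, and the right-hand side is $\lambda D^T \hgamma$, so if the left-hand side is the same for all solutions, then so is $D^T \hgamma$ after dividing by $\lambda > 0$.

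The first step is to invoke Lemma \ref{lem:basic_g}(ii) to conclude that the fit \smash{$X\hbeta$} is the same across all solutions. Next, I need to argue that $\nabla G(X\hbeta)$ is well-defined at this common fit, i.e., that \smash{$X\hbeta \in \inte(\dom G)$}. This is the one point that deserves care: since $G$ is essentially smooth, the subdifferential $\partial G(z)$ is empty whenever $z \notin \inte(\dom G)$ (any boundary point of $\dom G$ has no subgradient because the gradient norm blows up along approaching sequences). But at any solution \smash{$\hbeta$} to \eqref{eq:genlasso_g}, the KKT subgradient condition \eqref{eq:stat_g} requires \smash{$\nabla G(X\hbeta)$} to exist, which forces \smash{$X\hbeta \in \inte(\dom G)$}, and there $G$ is differentiable, so \smash{$\nabla G(X\hbeta)$} is uniquely determined by the fit alone. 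Consequently $X^T \nabla G(X\hbeta)$ is the same across solutions, so $\lambda D^T \hgamma$ is the same, giving the invariance of $D^T \hgamma$.

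For the second claim, assume $D$ has full row rank, so $D^T$ is injective (its null space is $\{0\}$). Provided \eqref{eq:genlasso_g} has at least one solution (so at least one optimal subgradient exists), any two optimal subgradients \smash{$\hgamma^{(1)}, \hgamma^{(2)}$} satisfy \smash{$D^T \hgamma^{(1)} = D^T \hgamma^{(2)}$} by the first part, and hence \smash{$\hgamma^{(1)} = \hgamma^{(2)}$} by injectivity of $D^T$.

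The main (and really only) obstacle is the domain issue in verifying that \smash{$\nabla G(X\hbeta)$} is actually defined; once that is handled by the essential smoothness assumption, the remainder of the proof is a direct transcription of the squared loss argument. I expect the proof to be only a few lines.
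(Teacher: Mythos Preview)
Your proposal is correct and matches the paper's approach exactly: the paper simply says that uniqueness of \smash{$X\hbeta$} from Lemma~\ref{lem:basic_g}, together with \eqref{eq:stat_g}, yields the result. Your added remark about why \smash{$X\hbeta \in \inte(\dom G)$} (via essential smoothness forcing $\partial G$ to be empty off the interior) is a welcome clarification of a point the paper leaves implicit, but it does not change the line of argument.
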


Denote by $G^*$ the conjugate function of $G$. When $G$ is essentially 
smooth and essentially strictly convex, the following facts hold (\eg, see 
Theorem 26.5 of \citet{rockafellar1970convex}): 
\begin{itemize}
\item its conjugate $G^*$ is also essentially smooth and essentially strictly
  convex; and 
\item the map $\nabla G : \inte(\dom(G)) \to \inte(\dom(G^*))$ is a
  homeomorphism with inverse $(\nabla G)^{-1} = \nabla G^*$. 
\end{itemize}
The conjugate function is intrinsically tied to duality, directions of
recession, and the existence of solutions. Standard arguments in convex
analysis, deferred to the appendix, give the next result.

\begin{lemma}
\label{lem:dual_g}
Fix any $y,X,D$, and $\lambda \geq 0$.  Assume $G$ is essentially smooth 
and essentially strictly convex.  The Lagrangian dual of problem
\eqref{eq:genlasso_g} can be written as  
\begin{equation}
\label{eq:dual_g}
\minimize_{u \in \R^m, \, v \in \R^u} \; G^*(-v) \quad\st\quad 
X^T v = D^T u,  \; \|u\|_\infty \leq \lambda,
\end{equation}
where $G^*$ is the conjugate of $G$. Any dual optimal pair \smash{$(\hu,\hv)$}
in \eqref{eq:dual_g}, and primal optimal solution-subgradient pair
\smash{$(\hbeta,\hgamma)$} in \eqref{eq:genlasso_g}, \ie, satisfying
\eqref{eq:stat_g}, \eqref{eq:subg_g}, assuming they all exist, must satisfy the
primal-dual relationships    
\begin{equation} 
\label{eq:primal_dual_g}
\nabla G(X\hbeta) = -\hv, \quad\text{and}\quad \hu = \lambda \hgamma.
\end{equation}
Lastly, existence of primal and dual solutions is guaranteed under the
conditions 
\begin{gather}
\label{eq:dom_g}
0 \in \inte(\dom(G)), \\
\label{eq:ran_g}
(-C) \cap \inte(\ran(\nabla G)) \neq \emptyset,
\end{gather}
where \smash{$C=(X^T)^{-1} (D^T B^m_\infty(\lambda))$}. In particular, under 
\eqref{eq:dom_g} and $C \neq \emptyset$, a solution exists in the dual problem
\eqref{eq:dual_g}, and under \eqref{eq:dom_g}, \eqref{eq:ran_g}, a solution
exists in the primal problem \eqref{eq:genlasso_g}. 
\end{lemma}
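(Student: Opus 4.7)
The plan is to follow the standard Lagrangian/Fenchel dualization of problem \eqref{eq:genlasso_g}, read off the primal-dual relationships from the saddle-point conditions, and then argue existence using recession and constraint-qualification facts for Legendre functions. To derive the dual, I would introduce an auxiliary variable $z = X\beta$ to rewrite \eqref{eq:genlasso_g} as the equality-constrained problem of minimizing $G(z) + \lambda\|D\beta\|_1$ over $(\beta,z)$ subject to $z - X\beta = 0$, and form the Lagrangian $L(\beta, z, v) = G(z) + \lambda\|D\beta\|_1 + v^T(z - X\beta)$. Minimizing over $z$ gives $-G^*(-v)$ by definition of the conjugate. For the $\beta$-block, I would rewrite $\lambda\|D\beta\|_1 = \sup_{\|u\|_\infty \leq \lambda} u^T D\beta$ and swap the resulting inf and sup, which is legitimate since the $u$-domain is compact and the expression is bilinear; the inner infimum in $\beta$ is $0$ when $X^T v = D^T u$ and $-\infty$ otherwise. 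Collecting the two pieces and negating produces exactly \eqref{eq:dual_g}.

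For the primal-dual relationships, given a primal optimal $(\hbeta, \hgamma)$ and a dual optimal $(\hu, \hv)$, strong duality forces $G(X\hbeta) + \lambda\|D\hbeta\|_1 + G^*(-\hv) = 0$. I would then apply Fenchel--Young twice: first, $G(X\hbeta) + G^*(-\hv) \geq -\hv^T X\hbeta$ with equality iff $-\hv = \nabla G(X\hbeta)$; second, using $X^T \hv = D^T \hu$ and $\|\hu\|_\infty \leq \lambda$, one has $\lambda\|D\hbeta\|_1 \geq \hu^T D\hbeta = \hv^T X\hbeta$ with equality iff $\hu \in \lambda\,\partial\|\cdot\|_1(D\hbeta)$, i.e., iff $\hu = \lambda\hgamma$ for the primal-pair subgradient. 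Summing, the two inequalities together yield $0$, so both must be equalities, giving \eqref{eq:primal_dual_g}. This argument also shows that \emph{if} we start from primal optimal $(\hbeta, \hgamma)$ and set $(\hu, \hv) = (\lambda\hgamma, -\nabla G(X\hbeta))$, the pair is dual feasible and its objective matches the primal value, which in fact establishes strong duality.

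For existence in the dual, $C \neq \emptyset$ makes the feasible set of \eqref{eq:dual_g} nonempty and closed. Under \eqref{eq:dom_g}, the recession function of $G^*$ equals the support function of $\dom(G)$, which is strictly positive on every nonzero direction since $0 \in \inte(\dom G)$; hence $G^*$ is cofinite, its sublevel sets are compact, and minimizing $G^*(-v)$ over the nonempty closed feasible set attains its infimum. For existence in the primal, I would apply the KKT conditions to \eqref{eq:dual_g}: the $v$-stationarity of its Lagrangian (with multiplier $\beta \in \R^p$ for the equality constraint) reads $-\nabla G^*(-\hv) + X\beta = 0$. Under \eqref{eq:ran_g}, a dual optimizer $\hv$ can be chosen with $-\hv \in \inte(\ran\nabla G) = \inte(\dom G^*)$, so $\nabla G^*(-\hv)$ is well-defined; then $X\hbeta = \nabla G^*(-\hv) \in \col(X)$, and reversing the Fenchel--Young computation shows that the $\hbeta$ recovered from this equation, together with a subgradient $\hgamma = \hu/\lambda$, achieves the primal value.

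The main obstacle is the last existence argument: condition \eqref{eq:ran_g} is exactly what is needed to invert $\nabla G$ at the dual optimum (so that $\nabla G^*$ is defined there) and to produce a primal-feasible preimage $\hbeta$. Without this interiority, one could have a dual optimizer on the boundary of $\dom G^*$ for which no primal minimizer exists---this is precisely how \eqref{eq:genlasso_g} fails to admit a solution in, \eg, a separable Bernoulli regression. Some minor care is also required to make the inf--sup swap in the dual derivation rigorous (via Sion's minimax theorem, using compactness of the $u$-set and bilinearity of the coupling), but this is routine.
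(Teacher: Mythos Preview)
Your approach is essentially the paper's: both introduce the auxiliary variable $z=X\beta$, form the Lagrangian, and dualize (you use an inf--sup swap where the paper computes the conjugate $h^*$ directly, and you read off \eqref{eq:primal_dual_g} from Fenchel--Young equality where the paper minimizes the Lagrangian, but these are equivalent); existence is then argued the same way via cofiniteness of $G^*$ under \eqref{eq:dom_g} and differentiability of $G^*$ at the dual optimum under \eqref{eq:ran_g}. The one place to tighten is your phrase that under \eqref{eq:ran_g} a dual optimizer ``can be chosen'' with $-\hv\in\inte(\dom G^*)$: the $\hv$-component is in fact unique, and it lies in the interior automatically because $G^*$ is essentially smooth---its gradient diverges along any sequence approaching the boundary of $\dom G^*$, so the minimum over a feasible set meeting $\inte(\dom G^*)$ cannot occur on the boundary (the paper invokes Theorem~3.12 of Bauschke--Borwein for this).
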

 
Assuming that primal and dual solutions exist, we see from
\eqref{eq:primal_dual_g} in the above lemma that \smash{$\hv$} must be unique
(by uniqueness of \smash{$X\hbeta$}, from Lemma \ref{lem:basic_g}), but
\smash{$\hu$} need not be (as \smash{$\hgamma$} is not necessarily unique).  
Moreover, under condition \eqref{eq:dom_g}, we know that $G$ is differentiable
at 0, and $\nabla G^*(\nabla G (0)) = 0$, hence we may rewrite \eqref{eq:dual_g}
as       
\begin{equation}
\label{eq:dual_g2}
\minimize_{u \in \R^m, \, v \in \R^n} \; D_{G^*}\big(-v, \nabla G(0) \big)
\quad\st\quad X^T v = D^T u,  \; \|u\|_\infty \leq \lambda, 
\end{equation}
where $D_f(x,z) = f(x)-f(z)-\langle \nabla f(z),x-z \rangle$ denotes the {\it
  Bregman divergence} between points $x,z$, with respect to a function $f$.  
Optimality of \smash{$\hv$} in \eqref{eq:dual_g2} may be expressed as 
\begin{equation}
\label{eq:dual_proj_g}
\hv = -P^{G^*}_{-C}\big(\nabla G(0) \big), \quad \text{where} \;\,  
C = (X^T)^{-1} \big(D^T B^m_\infty(\lambda)\big).
\end{equation}
Here, recall $(X^T)^{-1}(S)$ denotes the preimage of a set $S$ under the linear
map $X^T$, $D^T S$ denotes the image of a set $S$ under the linear map $D^T$,   
\smash{$B^m_\infty(\lambda) = \{ u \in \R^m : \|u\|_\infty \leq 
  \lambda\}$} is the $\ell_\infty$ ball of radius $\lambda$ in $\R^m$, and now
$P^f_S(\cdot)$ is the projection operator onto a set $S$ with respect to the
Bregman divergence of a function $f$, \ie, \smash{$P^f_S(z) = \argmin_{x \in S} 
  D_f(x,z)$}. 
From \eqref{eq:dual_proj_g} and \eqref{eq:primal_dual_g}, we see that 
\begin{equation}
\label{eq:primal_proj_g}
X\hbeta = \nabla G^* \Big( P^{G^*}_{-C}\big(\nabla G(0) \big)\Big).
\end{equation}
We note the analogy between \eqref{eq:dual_proj_g}, \eqref{eq:primal_proj_g}
and \eqref{eq:dual_proj}, \eqref{eq:primal_proj} in the squared loss case; for
\smash{$G(z)=\frac{1}{2} \|y-z\|_2^2$}, we have $\nabla
G(0)=-y$, \smash{$G^*(z)=\frac{1}{2} \|y+z\|_2^2 - \frac{1}{2} \|y\|_2^2$},
\smash{$\nabla G^*(z)=y+z$}, 
\smash{$-P^{G^*}_{-C}(\nabla G(0)) = P_C(y)$}, and so \eqref{eq:dual_proj_g},   
\eqref{eq:primal_proj_g} match \eqref{eq:dual_proj}, \eqref{eq:primal_proj},
respectively.  But when $G$ is non-quadratic, we see that the dual solution
\smash{$\hv$} and primal fit \smash{$X\hbeta$} are given in terms of a
non-Euclidean projection operator, defined with respect to
the Bregman divergence of $G^*$.  See Figure \ref{fig:geom_g} for an
illustration.  This complicates the study of the primal and 
dual problems, in comparison to the squared loss case; still, as we 
will show in the coming subsections, several key properties of primal and
dual solutions carry over to the current general loss setting.  

\begin{figure}[htb]
\centering
\includegraphics[width=0.8\textwidth]{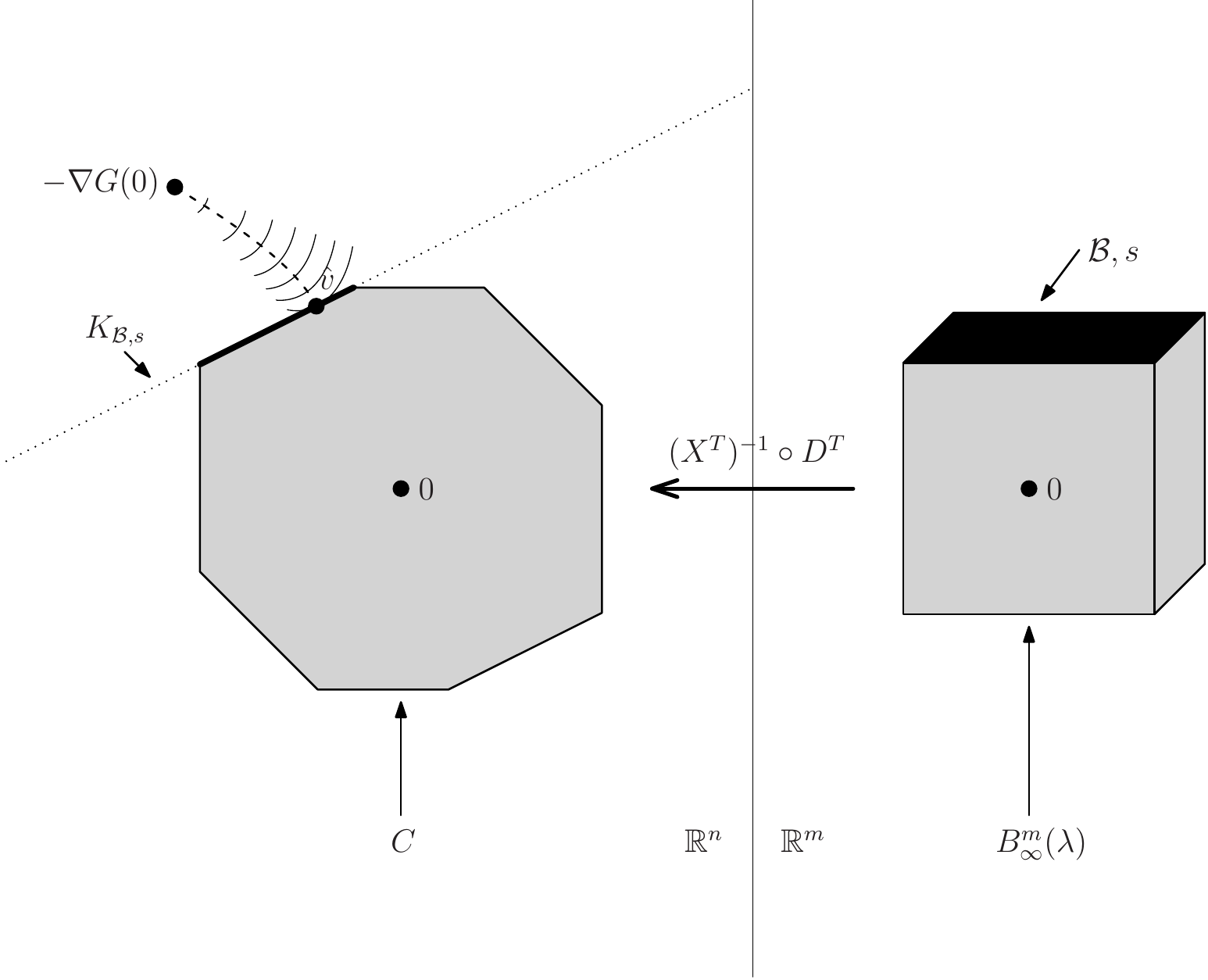}
\caption{\it Geometry of the dual problem \eqref{eq:dual_g2}, for a general loss  
  $G$.  As in \eqref{eq:dual_proj_g}, the dual solution \smash{$\hv$}
  may be seen as the Bregman projection of \smash{$-\nabla G(0)$} onto a set
  $C$ with respect to the map $x \mapsto G^*(-x)$ (where $G^*$ is the
  conjugate of $G$).  Shown in the figure are the contours of this
  map, around \smash{$-\nabla G(0)$}; the point \smash{$\hv$} lies at the
  intersection of the lowest-level contour and $C$.  Here, as in the squared
  loss case, \smash{$C=(X^T)^{-1}(D^T B^m_\infty(\lambda))$}, which is a
  polyhedron. This realization can be used to derive the implicit form
  \eqref{eq:fit_glm} for \smash{$X\hbeta$}, based on \eqref{eq:primal_proj_g}
  and the face of $C$ on which \smash{$\hv$} lies, as explained in Remark
  \ref{rem:implicit_glm}.}     
\label{fig:geom_g}
\end{figure}

\subsection{Existence in (regularized) GLMs}
\label{sec:existence}

Henceforth, we focus on the case in which $G$ takes the form
\eqref{eq:g_glm}. The stationarity condition \eqref{eq:stat_g} is 
\begin{equation}
\label{eq:stat_glm}
X^T \big(y-\nabla \psi(X \hbeta)\big) = \lambda D^T \hgamma,
\end{equation}
and using the identities \smash{$G^*(x)=\psi^*(x+y)$},
\smash{$P^{G^*}_S(x)=P^{\psi^*}_{S+y}(x+y)-y$}, the dual and primal
projections, \eqref{eq:dual_proj_g} and \eqref{eq:primal_proj_g}, become     
\begin{equation}
\label{eq:proj_glm}
\hv = y - P^{\psi^*}_{y-C}\big(\nabla \psi(0) \big), 
\quad \text{and} \quad  
X\hbeta = \nabla \psi^* \Big( P^{\psi^*}_{y-C}\big(\nabla \psi(0)  
\big)\Big). 
\end{equation}
As a check, in the squared loss case, we have
\smash{$\psi(z)=\frac{1}{2} \|z\|_2^2$}, $\nabla\psi(0)=0$,
\smash{$\psi^*(z)=\frac{1}{2} \|z\|_2^2$}, $\nabla\psi^*(z)=z$, 
\smash{$P^{\psi^*}_{y-C}(\nabla \psi(0))=y-P_C(y)$}, so \eqref{eq:proj_glm}
matches \eqref{eq:dual_proj}, \eqref{eq:primal_proj}.  Finally, the conditions
\eqref{eq:dom_g}, \eqref{eq:ran_g} that guarantee the existence of primal and
dual solutions become 
\begin{gather}
\label{eq:dom_psi}
0 \in \inte(\dom(\psi)), \\
\label{eq:ran_psi}
y \in \inte(\ran(\nabla \psi)) + C,
\end{gather}
where recall \smash{$C=(X^T)^{-1} (D^T B^m_\infty(\lambda))$}. 

We take somewhat of a detour from our main goal (establishing
uniqueness in \eqref{eq:genlasso_g}), and study the existence conditions 
\eqref{eq:dom_psi}, \eqref{eq:ran_psi}. To gather insight, we examine them in 
detail for some cases of interest.  We begin by looking at unregularized
($\lambda=0$) logistic and Poisson regression.  The proof of the next result is
straightforward in all but the logistic regression case, and is given in the
appendix. 

\begin{lemma}
\label{lem:exist_glm_unreg}
Fix any $y,X$.  Assume that $G$ is of the form \eqref{eq:g_glm}, where $\psi$ is
essentially smooth and essentially strictly convex, satisfying $0 \in
\inte(\dom(\psi))$.  Consider problem \eqref{eq:genlasso_g}, with
$\lambda=0$. Then the sufficient condition \eqref{eq:ran_psi} for the existence
of a solution is equivalent to  
\begin{equation}
\label{eq:ran_psi_unreg}
y \in \inte(\ran(\nabla \psi)) + \nul(X^T).
\end{equation}
For logistic regression, where \smash{$\psi(z)=\sum_{i=1}^n \log(1+e^{z_i})$}
and $y \in \{0,1\}^n$, if we write $Y_i=2y_i-1 \in \{-1,1\}$,  $i=1,\ldots,n$, 
and we denote by $x_i \in \R^p$, $i=1,\ldots,n$ the rows of $X$, then condition  
\eqref{eq:ran_psi_unreg} is equivalent to  
\begin{equation}
\label{eq:ran_psi_log}
\text{there does not exist $b \neq 0$ such that $Y_i x_i^T b \geq 0$,
  $i=1,\ldots,n$}. 
\end{equation}
For Poisson regression, where \smash{$\psi(z)=\sum_{i=1}^n e^{z_i}$} and $y \in 
\mathbb{N}^n$ (where $\mathbb{N}=\{0,1,2,\ldots\}$ denotes the set of natural 
numbers), condition \eqref{eq:ran_psi_unreg} is equivalent to 
\begin{equation}
\label{eq:ran_psi_pois}
\text{there exists $\delta \in \nul(X^T)$ such that $y_i + \delta_i > 0$,
  $i=1,\ldots,n$}. 
\end{equation}
\end{lemma}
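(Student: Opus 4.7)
The plan is to dispatch the easy pieces first (the $\lambda=0$ reduction and the Poisson case), then tackle the logistic case, which is where the substantive work lies.

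For the reduction: at $\lambda=0$, \smash{$B^m_\infty(0)=\{0\}$}, so \smash{$C=(X^T)^{-1}(\{0\})=\nul(X^T)$}, and substituting into \eqref{eq:ran_psi} immediately yields \eqref{eq:ran_psi_unreg}. For the Poisson case, I would compute $\nabla\psi(z)=(e^{z_1},\ldots,e^{z_n})$ with $\dom(\psi)=\R^n$, giving $\ran(\nabla\psi)=\inte(\ran(\nabla\psi))=(0,\infty)^n$; then \eqref{eq:ran_psi_unreg} unpacks as the existence of $\delta_0\in\nul(X^T)$ with $y-\delta_0>0$ componentwise. Replacing $\delta_0$ by $\delta=-\delta_0$ (permissible since $\nul(X^T)$ is a subspace) gives \eqref{eq:ran_psi_pois}.

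For the logistic case, $\nabla\psi$ acts componentwise by the sigmoid, so $\ran(\nabla\psi)=\inte(\ran(\nabla\psi))=(0,1)^n$. Condition \eqref{eq:ran_psi_unreg} reads $y-\delta\in(0,1)^n$ for some $\delta\in\nul(X^T)$. Unpacking componentwise for $y_i\in\{0,1\}$ forces $Y_i\delta_i>0$ (together with $|\delta_i|<1$, which one can drop by rescaling $\delta$ inside the subspace $\nul(X^T)$). Hence \eqref{eq:ran_psi_unreg} is equivalent to the existence of some $\delta\in\nul(X^T)$ with $Y_i\delta_i>0$ for all $i$.

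The main obstacle is converting this null-space characterization into the separation-style condition \eqref{eq:ran_psi_log}. My plan is to invoke a theorem of the alternative---specifically Stiemke's theorem---applied to the matrix $M=X^T Y$, where $Y=\diag(Y_1,\ldots,Y_n)$. Under the substitution $v=Y\delta$, the reduced condition above is exactly ``$Mv=0$ admits a strictly positive solution $v$''. Stiemke's theorem then characterizes the alternative as the existence of $b$ with $M^T b = YXb \geq 0$ and $YXb \neq 0$, i.e., $b$ with $Y_i x_i^T b \geq 0$ for all $i$ and $Xb \neq 0$. Taking contrapositives, and identifying $Xb \neq 0$ with $b \neq 0$ (as is natural in the setting of this lemma, where the unregularized MLE being at issue already implicates a full-column-rank $X$), delivers \eqref{eq:ran_psi_log}.
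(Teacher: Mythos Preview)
Your proposal is correct and follows essentially the same route as the paper: the $\lambda=0$ reduction and the Poisson case are handled identically, and for the logistic case both you and the paper reduce \eqref{eq:ran_psi_unreg} to the statement that $\nul(X^T D_Y)$ meets the open positive orthant (you via the substitution $v=Y\delta$, the paper via $a'=y-a$ and noting $D_Y a'\in(0,1)^n$), and then both invoke Stiemke's theorem of the alternative to pass to the separation form \eqref{eq:ran_psi_log}.

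One point worth flagging: you correctly observe that Stiemke delivers the alternative ``$\exists\,b$ with $D_Y Xb\geq 0$ and $Xb\neq 0$'' rather than ``$\exists\,b\neq 0$ with $D_Y Xb\geq 0$'', and you reconcile these by appealing to full column rank of $X$. The paper's proof glosses over exactly this distinction (it asserts that $\col(D_Y X)\cap\R^n_+=\{0\}$ is a ``direct rewriting'' of \eqref{eq:ran_psi_log}, which is only literally true when $\nul(X)=\{0\}$). So your treatment is in fact more transparent than the paper's on this issue; but your justification---that the unregularized MLE setting ``implicates'' full column rank---is informal, since the lemma as stated fixes an arbitrary $X$. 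In truth the equivalence with \eqref{eq:ran_psi_log} as written requires $\nul(X)=\{0\}$; without that assumption the correct statement replaces ``$b\neq 0$'' by ``$Xb\neq 0$''.
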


\begin{remark}
For the cases of logistic and Poisson regression, the lemma shows that the
sufficient condition \eqref{eq:ran_psi} for the existence of a solution (note
\eqref{eq:dom_psi} is automatically satisfied, as $\dom(\psi)=\R^n$ in 
these cases) reduces to \eqref{eq:ran_psi_log} and \eqref{eq:ran_psi_pois},
respectively.  Interestingly, in both cases, this recreates a well-known {\it  
  necessary} and sufficient condition for the existence of the maximum
likelihood estimate (MLE); see \citet{albert1984existence} for the logistic 
regression condition \eqref{eq:ran_psi_log}, and \citet{haberman1974log} for the 
Poisson regression condition \eqref{eq:ran_psi_pois}.  The former
condition \eqref{eq:ran_psi_log} is particularly intuitive, and says that the
logistic MLE exists if and only if there is no hyperplane that
``quasicompletely'' separates the points $x_i$, $i=1,\ldots,n$ into the 
positive and negative classes (using the terminology of
\citet{albert1984existence}).  For a modern take on this condition, see  
\citet{candes2018phase}.
\end{remark}

Now we inspect the regularized case ($\lambda>0$).  The proof of the next
result is straightforward and can be found in the appendix.

\begin{lemma}
\label{lem:exist_glm_reg}
Fix any $y,X,D$, and $\lambda>0$. Assume that $G$ is of the form 
\eqref{eq:g_glm}, where we are either in the logistic case,
\smash{$\psi(z)=\sum_{i=1}^n \log(1+e^{z_i})$}  and $y \in \{0,1\}^n$, or in the
Poisson case, \smash{$\psi(z)=\sum_{i=1}^n e^{z_i}$} and $y \in \mathbb{N}^n$
In either case, a sufficient (but not necessary) condition for
\eqref{eq:ran_psi} to hold, and hence for a solution to exist in problem
\eqref{eq:genlasso_g}, is   
\begin{equation}
\label{eq:ran_psi_reg}
\nul(D) \subseteq \nul(X).
\end{equation}
\end{lemma}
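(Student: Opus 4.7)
The plan is to verify condition \eqref{eq:ran_psi} directly, by exhibiting a small vector $v \in C$ with $y - v \in \inte(\ran(\nabla \psi))$. Under $\nul(D) \subseteq \nul(X)$, all sufficiently small vectors lie in $C$, and the set $\inte(\ran(\nabla \psi))$ (which is $(0,1)^n$ in the logistic case and $(0,\infty)^n$ in the Poisson case) is wide enough to accommodate the discrete response values $y$ after a small perturbation.

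First I would translate the null-space inclusion into the equivalent column-space inclusion $\col(X^T) \subseteq \col(D^T)$. This implies that for every $v \in \R^n$ there exists $u \in \R^m$ with $D^T u = X^T v$, and the minimum-norm such $u$, namely $(D^T)^+ X^T v$, depends linearly---and hence continuously---on $v$. Consequently, there is a constant $M = M(X,D) > 0$ with $\|(D^T)^+ X^T v\|_\infty \leq M \|v\|_\infty$ for all $v$, so that $v \in C$ whenever $\|v\|_\infty \leq \lambda/M$.

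Next I would construct $v$ case by case. In the logistic case, $\inte(\ran(\nabla \psi)) = (0,1)^n$, so I would set $s_i = 2y_i - 1 \in \{-1,1\}$ and $v = \epsilon s$, giving $y_i - \epsilon s_i \in \{\epsilon,\, 1-\epsilon\} \subseteq (0,1)$ for any $\epsilon \in (0,1)$. In the Poisson case, $\inte(\ran(\nabla \psi)) = (0,\infty)^n$, so I would take $v = -\epsilon\,\ones$, giving $y_i - v_i = y_i + \epsilon > 0$ for any $\epsilon > 0$, since $y \in \mathbb{N}^n$. Choosing $\epsilon$ in addition small enough that $\epsilon \leq \lambda/M$ (and $\epsilon < 1$ in the logistic case) then ensures $v \in C$ by the previous step, so that $y = (y - v) + v \in \inte(\ran(\nabla \psi)) + C$, which is exactly \eqref{eq:ran_psi}.

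There is no real obstacle here: the only mildly nontrivial step is the linear-algebraic passage from $\nul(D) \subseteq \nul(X)$ to a norm-bounded preimage of $X^T v$ under $D^T$, and thereafter the construction of $v$ is explicit and simply exploits the fact that the entries of $y$ are either Boolean or natural numbers. The sufficiency (as opposed to necessity) of the condition is clear from the construction, which makes no attempt to use $y$ beyond its coarse sign/integrality structure.
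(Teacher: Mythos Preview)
Your proposal is correct and follows essentially the same approach as the paper: both arguments show that the hypothesis $\nul(D)\subseteq\nul(X)$ (equivalently $\row(X)\subseteq\row(D)$) forces $C$ to contain a small $\ell_\infty$ ball around the origin, and then perturb $y$ by a small vector to land in $\inte(\ran(\nabla\psi))$. Your version is simply more explicit, spelling out the pseudoinverse bound and the particular perturbation vectors in each case, whereas the paper states the key containment more tersely.
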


\begin{remark}
We note that, in particular, condition \eqref{eq:ran_psi_reg} always holds when
$D=I$, which implies that lasso penalized logistic regression and
lasso penalized Poisson regression always have solutions.  
\end{remark}

\subsection{Implicit form of solutions}

Fix an arbitrary $\lambda>0$, and let \smash{$(\hbeta,\hgamma)$} denote an 
optimal solution-subgradient pair, \ie, satisfying \eqref{eq:stat_g},
\eqref{eq:subg_g}. As before, we define the boundary set and boundary signs in
terms of \smash{$\hgamma$},    
$$
\cB = \big\{ i \in \{1,\ldots,m\} : | \hgamma_i | = 1 \big\},
\quad \text{and} \quad s = \sign(\hgamma_\cB).
$$
and the active set and active signs in terms of \smash{$\hbeta$}, 
$$
\cA=\supp(D\hbeta)=\big\{i \in \{1,\ldots,m\} : (D\hbeta)_i \neq  0\big\},
\quad \text{and} \quad r = \sign(\hgamma_\cA).
$$
By \eqref{eq:stat_g}, we have that $\cA \subseteq \cB$.  
In general, $\cA,r,\cB,s$ are not unique, as neither \smash{$\hbeta$}
nor \smash{$\hgamma$} are. 

The next lemma gives an implicit form for the fit and solutions in
\eqref{eq:genlasso_g}, with $G$ as in \eqref{eq:g_glm}, akin to the results
\eqref{eq:fit}, \eqref{eq:sol} in the squared loss case.  Its proof stems
directly from the KKT conditions \eqref{eq:stat_glm}; it is somewhat technical
and deferred until the appendix.  

\begin{lemma}
\label{lem:implicit_glm}
Fix any $y,X,D$, and $\lambda>0$.  Assume that $G$ is of the form
\eqref{eq:g_glm}, where $\psi$ is essentially smooth and essentially  
strictly convex, and satisfies \eqref{eq:dom_psi}, \eqref{eq:ran_psi}.
Let \smash{$\hbeta$} be a solution in problem \eqref{eq:genlasso_g}, and let 
\smash{$\hgamma$} be a corresponding optimal subgradient, with boundary set 
and boundary signs $\cB,s$. Define the affine subspace
\begin{equation}
\label{eq:k}
K_{\cB,s} = \lambda (P_{\nul(D_{-\cB})}X^T)^+ D_\cB^T s +
\nul(P_{\nul(D_{-\cB})}X^T). 
\end{equation}
Then the unique fit can be expressed as  
\begin{equation}
\label{eq:fit_glm}
X\hbeta = \nabla \psi^* \Big( P^{\psi^*}_{y-K_{\cB,s}}\big(\nabla \psi(0)   
\big)\Big),
\end{equation}
and the solution can be expressed as
\begin{equation}
\label{eq:sol_glm}
\hbeta = (XP_{\nul(D_{-\cB})})^+ 
\nabla \psi^* \Big( P^{\psi^*}_{y-K_{\cB,s}}\big(\nabla \psi(0) \big)\Big) + b,
\end{equation}
for some \smash{$b \in \nul(X) \cap \nul(D_{-\cB})$}.  Similarly, letting
$\cA,r$ denote the active set and active signs of \smash{$\hbeta$}, the same
expressions hold as in the last two displays with $\cB,s$ replaced by $\cA,r$
(\ie, with the affine subspace of interest now being
\smash{$K_{\cA,r} = \lambda (P_{\nul(D_{-\cA})}X^T)^+ D_\cA^T r +
\nul(P_{\nul(D_{-\cA})}X^T)$}). 
\end{lemma}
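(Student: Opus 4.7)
The plan is to derive an implicit form for $\hbeta$ directly from the KKT conditions \eqref{eq:stat_glm}, in parallel with the squared loss derivation of \eqref{eq:fit}, \eqref{eq:sol}, but with the Euclidean projection replaced by a Bregman projection as in the dual description \eqref{eq:proj_glm}. Write $P = P_{\nul(D_{-\cB})}$. Since $\cA \subseteq \cB$, we have $D_{-\cB}\hbeta = 0$, so $P\hbeta = \hbeta$. Decomposing $D^T \hgamma = D_\cB^T s + D_{-\cB}^T \hgamma_{-\cB}$ and multiplying \eqref{eq:stat_glm} on the left by $P$, the identity $PD_{-\cB}^T = 0$ (since $\col(D_{-\cB}^T) = \nul(D_{-\cB})^\perp$) eliminates the second term and produces $PX^T(y - \nabla\psi(X\hbeta)) = \lambda P D_\cB^T s$. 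Setting $\hv = y - \nabla\psi(X\hbeta)$, this reads $PX^T \hv = \lambda P D_\cB^T s$; since this equation is solvable, $\lambda P D_\cB^T s \in \col(PX^T)$, so $\lambda(PX^T)^+ D_\cB^T s$ is a particular solution and the solution set equals precisely $K_{\cB,s}$ from \eqref{eq:k}. In particular $\hv \in K_{\cB,s}$.

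The second and hardest step is to upgrade this containment to the statement that $y - \hv = \nabla\psi(X\hbeta)$ is the Bregman projection $P^{\psi^*}_{y - K_{\cB,s}}(\nabla\psi(0))$, which will yield \eqref{eq:fit_glm}. Parametrize $x = y - v$ for $v \in K_{\cB,s}$; using $\nabla\psi^*(\nabla\psi(0)) = 0$ (from \eqref{eq:dom_psi}), the Bregman divergence collapses to $D_{\psi^*}(y - v, \nabla\psi(0)) = \psi^*(y - v) - \psi^*(\nabla\psi(0))$, so the problem reduces to minimizing $v \mapsto \psi^*(y - v)$ over $K_{\cB,s}$. The tangent space of $K_{\cB,s}$ is $\nul(PX^T) = \col(XP)^\perp$, so first-order optimality reads $\nabla\psi^*(y - v) \in \col(XP)$; at $v = \hv$ we have $\nabla\psi^*(y - \hv) = \nabla\psi^*(\nabla\psi(X\hbeta)) = X\hbeta = XP\hbeta \in \col(XP)$, so the condition is satisfied. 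The hard part is verifying that this stationary point is the global minimizer of the Bregman problem. This needs $y - \hv \in \inte(\dom(\psi^*))$, which holds because $y - \hv = \nabla\psi(X\hbeta)$ and $\nabla\psi$ maps $\inte(\dom(\psi))$ homeomorphically onto $\inte(\dom(\psi^*))$; combined with essential smoothness of $\psi^*$ (which rules out boundary minimizers) and strict convexity on the interior, this gives uniqueness. Applying $\nabla\psi^*$ then produces \eqref{eq:fit_glm}.

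For \eqref{eq:sol_glm}, set $b = \hbeta - (XP)^+ X\hbeta$, so that $\hbeta = (XP)^+ X\hbeta + b$ trivially, and substituting \eqref{eq:fit_glm} on the right gives the claimed form. It remains to check $b \in \nul(X) \cap \nul(D_{-\cB})$. Since $X\hbeta = XP\hbeta \in \col(XP)$, $X(XP)^+ X\hbeta = (XP)(XP)^+ X\hbeta = P_{\col(XP)} X\hbeta = X\hbeta$, so $Xb = 0$. And since $\hbeta \in \col(P)$ while $\ran((XP)^+) = \col((XP)^T) = \col(PX^T) \subseteq \col(P) = \nul(D_{-\cB})$, both summands defining $b$ lie in $\nul(D_{-\cB})$, hence so does $b$. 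The active-set variant with $\cA, r$ in place of $\cB, s$ follows by rerunning the entire argument verbatim, using $\hbeta \in \nul(D_{-\cA})$ (immediate from the definition of $\cA$) and $\hgamma_i = r_i$ for $i \in \cA$ (forced by $\cA \subseteq \cB$, which gives $|\hgamma_\cA| = 1$).
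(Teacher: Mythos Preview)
Your proof is correct and follows essentially the same approach as the paper: both multiply the stationarity condition \eqref{eq:stat_glm} by $P_{\nul(D_{-\cB})}$, identify the resulting equations as the first-order optimality conditions for the Bregman projection of $\nabla\psi(0)$ onto the affine subspace $y-K_{\cB,s}$ with respect to $\psi^*$, and then read off the solution form. Your presentation is slightly more streamlined in recognizing $\hv\in K_{\cB,s}$ directly as the solution set of $PX^T v=\lambda PD_\cB^T s$, whereas the paper passes through the intermediate manipulations with $M=PX^T$ and $M^+$ to reach the equivalent pair of conditions \eqref{eq:stat_glm_b1}, \eqref{eq:stat_glm_b2}, but the substance is the same.
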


\begin{remark}
\label{rem:implicit_glm}
The proof of Lemma \ref{lem:implicit_glm} derives the representation
\eqref{eq:fit_glm} using technical manipulation of the KKT conditions.  But the
same result can be derived using the geometry  
surrounding the dual problem \eqref{eq:dual_g2}. See Figure \ref{fig:geom_g} for
an accompanying illustration, and Remark \ref{rem:implicit} for a similar
geometric argument in the squared loss case.  As
\smash{$\hgamma$} has boundary set and signs $\cB,s$, and
\smash{$\hu=\lambda\hgamma$} from \eqref{eq:primal_dual_g}, 
we see that \smash{$\hu$} must lie on the face of \smash{$B^m_\infty(\lambda)$}
whose affine span is \smash{$E_{\cB,s}=\{u \in \R^m : u_{\cB,s}=\lambda s\}$};
and as \smash{$X^T \hv = D^T \hu$}, we see that \smash{$\hv$} lies on the 
face of $C$ whose affine span is \smash{$K_{\cB,s}=(X^T)^{-1} D^T E_{\cB,s}$},
which, it can be checked, can be rewritten explicitly as the affine subspace in
\eqref{eq:k}.  Hence, the projection of $\nabla G(0)$ onto $-C$ lies on a face 
whose affine span is \smash{$-K_{\cB,s}$}, and we can write   
$$
-\hv = P^{G^*}_{-K_{\cB,s}}\big(\nabla G(0) \big),
$$
\ie, we can simply replace the set $-C$ in \eqref{eq:dual_proj_g} with
\smash{$-K_{\cB,s}$}.  When $G$ is of the form \eqref{eq:g_glm}, repeating the
same arguments as before therefore shows that the dual and primal projections in  
\eqref{eq:proj_glm} hold with $-C$ replaced by \smash{$-K_{\cB,s}$}, which
yields the primal projection result in \eqref{eq:fit_glm} in the lemma. 
\end{remark}

Though the form of solutions in \eqref{eq:sol_glm} appears more complicated in
form than the form \eqref{eq:sol} in the squared loss case, we see that one
important property has carried over to the general loss setting, namely, the
property that \smash{$b \in \nul(X) \cap \nul(D_{-\cB})$}.  As before, let
us assign to each boundary set $\cB$ associated with an optimal subgradient in 
\eqref{eq:genlasso_g} a basis matrix \smash{$U(\cB) \in \R^{p \times k(\cB)}$},
whose linearly independent columns that span \smash{$\nul(D_{-\cB})$}.  Then by
the same logic as explained at the beginning of Section
\ref{sec:uniqueness_rough}, we see that, under the conditions of Lemma
\ref{lem:implicit_glm}, there is a unique solution in \eqref{eq:genlasso_g},
given by \eqref{eq:sol_glm} with $b=0$, provided that conditions
\eqref{eq:cond1}, \eqref{eq:cond2} hold. 

The arguments in the squared loss case, proceeding the observation
of \eqref{eq:cond1}, \eqref{eq:cond2} as a sufficient condition, relied on the
invariance of the linear subspace \smash{$X\nul(D_{-\cB})$} over all boundary
sets $\cB$ of optimal subgradients in the generalized lasso problem
\eqref{eq:genlasso}.  This key result was established, recall, in Lemma 10 of
\citet{tibshirani2012degrees}, transcribed in our Lemma \ref{lem:invar} for
convenience.  For the general loss setting, no such invariance result exists (as
far as we know). Thus, with uniqueness in mind as the end goal, we take somewhat
of a detour and study local properties of generalized lasso solutions, and
invariance of the relevant linear subspaces, over the next two subsections.

\subsection{Local stability}

We establish a result on the local stability of the boundary set and boundary
signs $\cB,s$ associated with an optimal solution-subgradient pair
\smash{$(\hbeta,\hgamma)$}, \ie, satisfying \eqref{eq:stat_g},
\eqref{eq:subg_g}.  This is a generalization of Lemma 9 in
\citet{tibshirani2012degrees}, which gives the analogous result for the case of 
squared loss.  We must first introduce some notation.  For arbitrary subsets
$\cA \subseteq  \cB \subseteq \{1,\ldots,m\}$, denote
\begin{equation}
\label{eq:m}
M_{\cA,\cB} = P_{[D_{\cB \setminus \cA}(\nul(X) \cap \nul(D_{-\cB}))]^\perp}  
D_{\cB \setminus\cA} (X P_{\nul(D_{-\cB})})^+.
\end{equation}
(By convention, when $\cA=\cB$, we set \smash{$M_{\cA,\cB}=0$}.) Define  
\begin{equation}
\label{eq:n}
\cN = \bigcup_{\substack{\cA,\cB,s : \\ M_{\cA,\cB} \neq 0}} 
\Big(K_{\cB,s} + \nabla\psi\big(\col(XP_{\nul(D_{-\cB})}) \cap
\nul(M_{\cA,\cB})\big) \Big).
\end{equation}
The union above is taken over all subsets $A \subseteq \cB \subseteq  
\{1,\ldots,m\}$ and vectors \smash{$s \in \{-1,1\}^{|\cB|}$}, such that 
\smash{$M_{\cA,\cB} \neq 0$}; and \smash{$K_{\cB,s},M_{\cA,\cB}$}, are as
defined in \eqref{eq:k}, \eqref{eq:m}, respectively. We use somewhat of an abuse
in notation in writing \smash{$\nabla\psi(\col(XP_{\nul(D_{-\cB})}) \cap
  \nul(M_{\cA,\cB}))$}; for an arbitrary triplet $(\cA,\cB,s)$, of course,
$\col(XP_{\nul(D_{-\cB})}) \cap \nul(M_{\cA,\cB})$ need not be contained in
$\inte(\dom(\psi))$, and so really, each such term in the above union should be
interpreted as \smash{$\nabla\psi(\col(XP_{\nul(D_{-\cB})}) \cap  
  \nul(M_{\cA,\cB}) \cap \inte(\dom(\psi)))$}.

Next we present the local stability result.  Its proof is lengthy and deferred
until the appendix.  

\begin{lemma}
\label{lem:local_glm}
Fix any $X,D$, and $\lambda>0$.  Fix $y \notin \cN$, where the set $\cN$ is
defined in \eqref{eq:n}.  Assume that $G$ is of the form \eqref{eq:g_glm}, where 
$\psi$ is essentially smooth and essentially strictly convex, satisfying
\eqref{eq:dom_psi}, \eqref{eq:ran_psi}.  That is, our assumptions on the
response are succinctly: \smash{$y \in \cN^c \cap (\inte(\ran(\nabla
  \psi))+C)$}.  Denote an optimal solution-subgradient pair in problem 
\eqref{eq:genlasso_g} by \smash{$(\hbeta(y),\hgamma(y))$}, our notation here
emphasizing the dependence on $y$, and similarly, denote the associated 
boundary set, boundary signs, active set, and active signs by 
\smash{$\cB(y),s(y),\cA(y),r(y)$}, respectively.   
There is a neighborhood $U$ of $y$ such that, for any $y' \in U$, problem
\eqref{eq:genlasso_g} has a solution, and in particular, it has an optimal 
solution-subgradient pair \smash{$(\hbeta(y'),\hgamma(y'))$} with the same
boundary set \smash{$\cB(y')=\cB(y)$}, boundary signs \smash{$s(y')=s(y)$},
active set \smash{$\cA(y')=\cA(y)$}, and active signs \smash{$r(y')=r(y)$}.  
\end{lemma}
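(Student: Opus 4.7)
The plan is to construct, for each $y'$ in a small neighborhood of $y$, an optimal solution-subgradient pair $(\hbeta(y'),\hgamma(y'))$ whose boundary and active sets/signs coincide with those at $y$, and verify the KKT conditions \eqref{eq:stat_glm}, \eqref{eq:subg_g} by continuity. Specifically, define \smash{$\phi(y') = \nabla\psi^*(P^{\psi^*}_{y'-K_{\cB,s}}(\nabla\psi(0)))$}, the fit predicted by the implicit form with boundary set $\cB$ and signs $s$. This is continuous in $y'$ on the open set where \eqref{eq:ran_psi} holds (in particular on a neighborhood of $y$), and satisfies $\phi(y) = X\hbeta$ by Lemma \ref{lem:implicit_glm}.

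The key structural step is to show that $y \notin \cN$ forces $M_{\cA,\cB} = 0$. By Lemma \ref{lem:implicit_glm}, we may write \smash{$\hbeta = (XP_{\nul(D_{-\cB})})^+(X\hbeta) + b$} for some $b \in \nul(X) \cap \nul(D_{-\cB})$. Applying $D_{\cB\setminus\cA}$ and using the active set condition $D_{\cB\setminus\cA}\hbeta=0$ (which holds because $\cA=\supp(D\hbeta)$) yields \smash{$D_{\cB\setminus\cA}(XP_{\nul(D_{-\cB})})^+(X\hbeta) \in D_{\cB\setminus\cA}(\nul(X)\cap\nul(D_{-\cB}))$}, which is exactly $M_{\cA,\cB}(X\hbeta)=0$. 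Combined with $X\hbeta \in \col(XP_{\nul(D_{-\cB})})$ and \smash{$\hv = y - \nabla\psi(X\hbeta) \in K_{\cB,s}$} (from the dual characterization underlying \eqref{eq:fit_glm}), we obtain $y \in K_{\cB,s} + \nabla\psi(\col(XP_{\nul(D_{-\cB})}) \cap \nul(M_{\cA,\cB}))$. If $M_{\cA,\cB}$ were nonzero, this would place $y$ in the corresponding summand of $\cN$, contradicting $y \notin \cN$.

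With $M_{\cA,\cB}=0$ in hand, the map \smash{$z \mapsto D_{\cB\setminus\cA}(XP_{\nul(D_{-\cB})})^+ z$} has image contained in $D_{\cB\setminus\cA}(\nul(X)\cap\nul(D_{-\cB}))$, so one may select $b(y') \in \nul(X) \cap \nul(D_{-\cB})$ continuously in $y'$ (via a pseudoinverse-based linear map) so that \smash{$\hbeta(y') := (XP_{\nul(D_{-\cB})})^+\phi(y') + b(y')$} satisfies $D_{\cB\setminus\cA}\hbeta(y')=0$ and $\hbeta(y)=\hbeta$. Then $\hbeta(y') \in \nul(D_{-\cA})$ by construction, and by continuity of $\hbeta(y')$ and strict nonvanishing of $(D\hbeta)_i$ for $i \in \cA$, we get $\sign((D\hbeta(y'))_i)=r_i$ for $y'$ near $y$, so $\cA(y')=\cA$ and $r(y')=r$. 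For the subgradient, since $\phi(y')$ is defined via projection onto $y'-K_{\cB,s}$, the dual point $\hv(y') := y' - \nabla\psi(X\hbeta(y'))$ lies in $K_{\cB,s}$, hence $X^T\hv(y') \in D^T E_{\cB,s}$; thus we can decompose $X^T\hv(y') = \lambda D_\cB^T s + \lambda D_{-\cB}^T z(y')$ with $z(y')$ depending continuously on $y'$ (using a pseudoinverse-based selection arranged so $z(y)=\hgamma_{-\cB}$). Setting $\hgamma(y')_\cB = s$ and $\hgamma(y')_{-\cB} = z(y')$ gives a valid subgradient satisfying \eqref{eq:stat_glm}; the strict inequality $\|\hgamma_{-\cB}\|_\infty<1$ at $y$ (since $-\cB$ is the off-boundary set) is preserved for $y'$ near $y$, yielding $\cB(y')=\cB$ and $s(y')=s$.

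The main obstacle is the structural claim in the second paragraph: the definition of $\cN$ is tailored precisely so that $y \notin \cN$ rules out the coincidence $M_{\cA,\cB}(X\hbeta)=0$ when $M_{\cA,\cB}\neq 0$, but verifying this requires carefully interweaving the primal-side implicit representation of $\hbeta$ with the dual-side fact that $\hv$ lies in the affine face $K_{\cB,s}$. Once this key identity $M_{\cA,\cB}=0$ is established, the rest reduces to continuous selection from affine solution spaces and preservation of strict sign and strict magnitude conditions under continuity—standard arguments that nevertheless require some bookkeeping.
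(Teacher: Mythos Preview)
Your proposal is correct and follows essentially the same route as the paper's proof: both establish the key structural identity $M_{\cA,\cB}=0$ from $y\notin\cN$ via the implicit representation of $\hbeta$ and the membership $\hv\in K_{\cB,s}$, then construct the candidate pair $(\hbeta(y'),\hgamma(y'))$ from the implicit form and verify the KKT conditions \eqref{eq:stat_glm}, \eqref{eq:subg_g} by continuity of the Bregman projection. The only cosmetic difference is that you select $b(y')$ via a fixed linear right inverse (giving continuity and $b(y)=b$ directly), whereas the paper invokes the bounded inverse theorem to argue that $b'$ can be chosen close to $b$; these are equivalent devices.
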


\begin{remark}
The set $\cN$ defined in \eqref{eq:n} is bigger than it needs to be;
to be precise, the same result as in Lemma \ref{lem:local_glm} actually holds
with $\cN$ replaced by the smaller set
\begin{equation}
\label{eq:n2}
\cN^* = \bigcup_{\substack{\cA,\cB,s : \\ M_{\cA,\cB} \neq 0}} 
\bigg\{ z \in \R^n : 
\nabla \psi^* \Big( P^{\psi^*}_{z-K_{\cB,s}}\big(\nabla \psi(0) \big)\Big)
\in \nul(M_{\cA,\cB}) \bigg\}.
\end{equation}
which can be seen from the proof of Lemma \ref{lem:local_glm}, as can be $\cN^*
\subseteq \cN$.  However, the definition of $\cN$ in \eqref{eq:n} is more
explicit than that of $\cN^*$ in \eqref{eq:n2}, so we stick with the former set
for simplicity.  
\end{remark}

\begin{remark}
\label{rem:n}
For each triplet $\cA,\cB,s$ in the definition 
\eqref{eq:n} over which the union is defined, the sets 
\smash{$K_{\cB,s}$} and \smash{$\col(XP_{\nul(D_{-\cB})}) \cap
  \nul(M_{\cA,\cB})$} both have Lebesgue measure zero, as they are affine    
spaces of dimension at most $n-1$.   When $\nabla \psi : \inte(\dom(\psi)) \to
\inte(\dom(\psi^*))$ is a $C^1$ diffeomorphism---this is true when $\psi$  
is the cumulant generating function for the Bernoulli or Poisson cases---the
image \smash{$\nabla \psi(\col(XP_{\nul(D_{-\cB})}) \cap \nul(M_{\cA,\cB}))$}
also has Lebesgue measure zero, for each triplet $\cA\,\cB,s$, and thus $\cN$
(being a finite union of measure zero sets) has measure zero.   
\end{remark}

\subsection{Invariance of the linear space $X\nul(D_{-\cB})$}

We leverage the local stability result from the last subsection to
establish an invariance of the linear subspace \smash{$X\nul(D_{-\cB})$} over
all choices of boundary sets $\cB$ corresponding to an optimal subgradient in
\eqref{eq:genlasso_g}.  This is a generalization of Lemma 10 in problem
\citet{tibshirani2012degrees}, which was transcribed in our Lemma
\ref{lem:invar}.  The proof is again deferred until the appendix.

\begin{lemma}
\label{lem:invar_glm}
Assume the conditions of Lemma \ref{lem:local_glm}. Then 
all boundary sets $\cB$ associated with optimal subgradients in 
problem \eqref{eq:genlasso_g} give rise to 
the same subspace \smash{$X \nul(D_{-\cB})$}, \ie, there is a single linear
subspace $L \subseteq \R^n$ such that \smash{$L=X \nul(D_{-\cB})$} for all
boundary sets $\cB$ of optimal subgradients.  Further, \smash{$L=X
  \nul(D_{-\cA})$} for all active sets $\cA$ associated with solutions in
\eqref{eq:genlasso_g}. 
\end{lemma}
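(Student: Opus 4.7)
The approach is to combine the local stability result (Lemma \ref{lem:local_glm}) with the implicit form of the fit (Lemma \ref{lem:implicit_glm}) to show that the Jacobian of the fit map $y \mapsto X\hbeta(y)$, computed using any boundary set $\cB$ of an optimal subgradient, has column space equal to $X\nul(D_{-\cB})$. Since the fit map is single-valued (the fit is unique), its Jacobian at a point is a single linear operator, and hence its column space is a single well-defined subspace---this forces all candidate subspaces to coincide.

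Fixing $y$ satisfying the hypotheses, I let $\hgamma_1,\hgamma_2$ be any two optimal subgradients at $y$ with boundary sets and signs $(\cB_i, s_i)$, $i=1,2$. Applying Lemma \ref{lem:local_glm} to each pair yields neighborhoods $U_i$ of $y$ on which problem \eqref{eq:genlasso_g} admits an optimal solution-subgradient pair with boundary set $\cB_i$ and signs $s_i$. For $y' \in U := U_1 \cap U_2$, the fit $X\hbeta(y')$ is unique (Lemma \ref{lem:basic_g}), and by Lemma \ref{lem:implicit_glm} it can be written in two ways:
\begin{equation*}
F(y') := X\hbeta(y') = \nabla\psi^*\Big(P^{\psi^*}_{y' - K_{\cB_i,s_i}}\big(\nabla\psi(0)\big)\Big), \quad i=1,2.
\end{equation*}

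To identify $\col(\partial F / \partial y')$, let $W_i$ be a matrix whose columns form a basis for the direction $V_i := \nul(P_{\nul(D_{-\cB_i})}X^T)$ of $K_{\cB_i, s_i}$; note $V_i^\perp = \col(XP_{\nul(D_{-\cB_i})}) = X\nul(D_{-\cB_i})$. The Bregman projection $p(y') := P^{\psi^*}_{y' - K_{\cB_i, s_i}}(\nabla\psi(0))$ is characterized by $p(y') \in y' - K_{\cB_i, s_i}$ together with $W_i^T \nabla\psi^*(p(y')) = 0$ (using $\nabla\psi^*(\nabla\psi(0)) = 0$, which holds under \eqref{eq:dom_psi}). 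Parametrizing $p(y') = y' - k_0 - W_i t(y')$ with $k_0 \in K_{\cB_i,s_i}$, and invoking the implicit function theorem---available since $\nabla^2\psi^*$ is positive definite on $\inte(\dom(\psi^*))$ and $W_i$ has full column rank---$t(y')$ is smooth near $y$, and direct differentiation yields
\begin{equation*}
\partial F/\partial y'\,\Big|_{y'=y} = H_i - H_i W_i (W_i^T H_i W_i)^{-1} W_i^T H_i = H_i^{1/2}\big(I - P_{H_i^{1/2}W_i}\big) H_i^{1/2},
\end{equation*}
where $H_i := \nabla^2 \psi^*(p(y))$ and $P_{H_i^{1/2} W_i}$ is the orthogonal projection onto $\col(H_i^{1/2}W_i)$. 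This matrix is symmetric with null space $\col(W_i) = V_i$, hence its column space is $V_i^\perp = X\nul(D_{-\cB_i})$.

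Since $\partial F/\partial y'|_{y'=y}$ is a single matrix, its column space is a single subspace; evaluating it via either $i$ gives $X\nul(D_{-\cB_i})$, so $X\nul(D_{-\cB_1}) = X\nul(D_{-\cB_2})$. As the pair of optimal subgradients was arbitrary, we obtain a single linear subspace $L = X\nul(D_{-\cB})$ for every boundary set $\cB$ of an optimal subgradient. The active-set claim follows identically, using the local constancy of $(\cA, r)$ in Lemma \ref{lem:local_glm} and Lemma \ref{lem:implicit_glm}'s parenthetical remark that the implicit form holds with $(\cA, r)$ replacing $(\cB, s)$: the same Jacobian computation with $K_{\cA, r}$ identifies the unique column space as $X\nul(D_{-\cA})$, whence $L = X\nul(D_{-\cA})$ for every active set $\cA$ of an optimal solution. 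The main obstacle is the column-space identification of $\partial F/\partial y'$: once the implicit function theorem is in force, the formula is mechanical, but pinning down the column space requires the symmetry and PSD argument above, and one must verify that $p(y)$ lies in $\inte(\dom(\psi^*))$ so that $H_i$ is genuinely positive definite---this is where the range condition \eqref{eq:ran_psi} enters.
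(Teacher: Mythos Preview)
Your strategy is sound in spirit---use local stability to write the unique fit in two ways on a neighborhood and then read off the subspace from the ``derivative'' of the fit map---but as written it does not prove the lemma under its stated hypotheses. The lemma (inheriting the conditions of Lemma~\ref{lem:local_glm}) only assumes that $\psi$ is essentially smooth and essentially strictly convex, i.e., $\psi$ and $\psi^*$ are $C^1$ on the interiors of their domains with gradients diverging at the boundary. You invoke the implicit function theorem and the Hessian $H_i=\nabla^2\psi^*(p(y))$, and you assert that $H_i$ is positive definite on $\inte(\dom(\psi^*))$. Neither twice differentiability of $\psi^*$ nor positive definiteness of its Hessian follows from the Legendre assumption (e.g., $x\mapsto x^4$ is strictly convex but has vanishing second derivative at $0$). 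So the Jacobian computation, and hence the identification $\col(\partial F/\partial y')=X\nul(D_{-\cB_i})$, is not justified in this generality. Condition~\eqref{eq:ran_psi} only places $p(y)$ in $\inte(\dom(\psi^*))$; it does not supply second-order information.

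The paper avoids this by working at the level of map values rather than derivatives. Writing $V_\cB=\nul(P_{\nul(D_{-\cB})}X^T)$ for the linear part of $K_{\cB,s}$, it takes any $z\in V_\cB$ and observes that $(y+\epsilon z)-K_{\cB,s}=y-K_{\cB,s}$, so $P^{\psi^*}_{(y+\epsilon z)-K_{\cB,s}}(\nabla\psi(0))=P^{\psi^*}_{y-K_{\cB,s}}(\nabla\psi(0))$; equating the two representations of the fit on $U$ then forces $P^{\psi^*}_{(y+\epsilon z)-K_{\cA,r}}(\nabla\psi(0))=P^{\psi^*}_{y-K_{\cA,r}}(\nabla\psi(0))$. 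If $z\notin V_\cA$, the affine spaces $(y+\epsilon z)-K_{\cA,r}$ and $y-K_{\cA,r}$ are parallel and disjoint, so a single point cannot lie in both---contradiction. This yields $V_\cB\subseteq V_\cA$ (and by symmetry, equality), hence $X\nul(D_{-\cB})=V_\cB^\perp=V_\cA^\perp=X\nul(D_{-\cA})$, using only continuity of the Bregman projection (Lemma~\ref{lem:breg_cont}) and the homeomorphism $\nabla\psi^*$. Your argument is essentially the differential version of this, and if you are willing to add the hypothesis that $\psi^*$ is $C^2$ with positive definite Hessian on $\inte(\dom(\psi^*))$---which holds in the Gaussian, Bernoulli, and Poisson cases---then your computation is correct and even yields the explicit Jacobian of the fit, which is of independent interest (cf.\ the discussion in Section~\ref{sec:discussion} about divergence of the fit). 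But to match the lemma as stated, you should either add that assumption explicitly or replace the implicit function theorem step with the parallel-affine-subspace argument above.
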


As already mentioned, Lemmas \ref{lem:local_glm} and \ref{lem:invar_glm} extend
Lemmas 9 and 10, respectively, of \citet{tibshirani2012degrees} to the case of a
general loss function $G$, taking the generalized linear model form in
\eqref{eq:g_glm}. This represents a significant advance in our understanding of
the local nature of generalized lasso solutions outside of the squared loss
case.  For example, even for the special case $D=I$, that logistic lasso
solutions have locally constant active sets, and that $\col(X_A)$ is invariant 
to all choices of active set $A$, provided $y$ is not in an ``exceptional set''
$\cN$, seem to be interesting and important findings.  These results could be
helpful, \eg, in characterizing the divergence, with respect to $y$, of the
generalized lasso fit in \eqref{eq:fit_glm}, an idea that we leave to future
work. 

\subsection{Sufficient conditions for uniqueness}

We are now able to build on the invariance result in Lemma
\ref{lem:invar_glm}, just as we did in the squared loss case, to derive our main
result on uniqueness in the current general loss setting. 

\begin{theorem}
\label{thm:uniqueness_glm}
Fix any $X,D$, and $\lambda>0$.   Assume that $G$ is of the form
\eqref{eq:g_glm}, where $\psi$ is essentially smooth and essentially  
strictly convex, and satisfies \eqref{eq:dom_psi}.  Assume:
\begin{enumerate}[(a)]
\item $\nul(X) \cap \nul(D) =\{0\}$, and $X$ is in $D$-GP; or
\item the entries of $X$ are drawn from a distribution that is absolutely
  continuous on $\R^{np}$, and $p \leq n$; or
\item the entries of $X$ are drawn from a distribution that is absolutely
  continuous on $\R^{np}$, $p > n$, and $\nuli(D) \leq n$.
\end{enumerate}
In case (a), the following holds deterministically, and in cases (b) or (c), it 
holds with almost surely with respect to the distribution of $X$: for any
\smash{$y \in \cN^c \cap (\inte(\ran(\nabla \psi))+C)$}, where $\cN$ is as 
defined in \eqref{eq:n}, problem \eqref{eq:genlasso_g} has a unique solution.  
\end{theorem}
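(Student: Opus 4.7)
The plan is to mirror the squared-loss proof of Theorem~\ref{thm:uniqueness}, with Lemma~\ref{lem:implicit_glm} playing the role of the closed form \eqref{eq:sol} and Lemma~\ref{lem:invar_glm} playing the role of Lemma~\ref{lem:invar}. First, existence of a solution in \eqref{eq:genlasso_g} is already built into the hypotheses on $y$: the assumption $y \in \inte(\ran(\nabla\psi))+C$ is precisely condition \eqref{eq:ran_psi}, which together with the assumed \eqref{eq:dom_psi} and Lemma~\ref{lem:dual_g} delivers a primal optimum $\hbeta$. By Lemma~\ref{lem:basic_g}, the fit $X\hbeta$ is unique.

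For case (a), I would fix an optimal solution-subgradient pair $(\hbeta,\hgamma)$ with boundary set and signs $(\cB,s)$. Lemma~\ref{lem:implicit_glm} writes $\hbeta$ in the form \eqref{eq:sol_glm} with $b \in \nul(X) \cap \nul(D_{-\cB})$, so it suffices to establish two claims: (i) $\nul(X) \cap \nul(D_{-\cB}) = \{0\}$ for every boundary set $\cB$ of an optimal subgradient (so that $b$ must vanish); and (ii) the $b=0$ expression in \eqref{eq:sol_glm} does not depend on the choice of $(\cB,s)$. Claim (ii) follows by mimicking Lemma~\ref{lem:invar_cond}: once (i) holds, the restriction of $X$ to $\nul(D_{-\cB})$ is injective, so the invariance of $L = X\nul(D_{-\cB})$ provided by Lemma~\ref{lem:invar_glm} (applicable because $y \in \cN^c$) lifts to invariance of $\nul(D_{-\cB})$ itself, and therefore of $P_{\nul(D_{-\cB})}$. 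Combining this with the identity $P_{\nul(D_{-\cB})} D_\cB^T s = P_{\nul(D_{-\cB})} D^T \hgamma$ and the uniqueness of $D^T \hgamma$ from Lemma~\ref{lem:gamma_g} yields a common value of \eqref{eq:sol_glm} across all valid $(\cB,s)$.

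Claim (i) is where the real work lies, and it amounts to a GLM analog of Lemma~\ref{lem:pre_gp}. I would assume non-uniqueness and $\nul(X) \cap \nul(D) = \{0\}$, then exploit the fact that in the GLM setting the fit $X\hbeta$---and hence $\nabla\psi(X\hbeta)$---remains unique by Lemma~\ref{lem:basic_g}. Consequently, the right-hand side of the stationarity condition \eqref{eq:stat_glm} behaves exactly like the fixed vector $\lambda D^T \hgamma$ in the squared-loss proof, and the same KKT-based manipulations used to prove Lemma~\ref{lem:pre_gp} should produce, for some $(\cB,s)$ arising from an optimal subgradient, a linear dependency of the form \eqref{eq:case1} or \eqref{eq:case2} among the columns of $Z = X U(\cB)$. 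The $D$-GP hypothesis rules this out, establishing (i).

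For cases (b) and (c), Lemmas~\ref{lem:x_cont_gp} and \ref{lem:x_cont_null} imply that $X$ is in $D$-GP and satisfies $\nul(X) \cap \nul(D) = \{0\}$ almost surely under the stated continuity and dimension assumptions, so each reduces to case (a) with probability one. The main obstacle is the adaptation of Lemma~\ref{lem:pre_gp}: the nonlinearity of $\nabla\psi$ prevents a literal reuse of the squared-loss algebra, and one must verify that the face structure of $C$ relevant to the Bregman projection in \eqref{eq:proj_glm} (see Remark~\ref{rem:implicit_glm}) interacts with the boundary data $(\cB,s)$ in the same way as in the Euclidean projection \eqref{eq:primal_proj}. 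The assumption $y \notin \cN$, via the local constancy supplied by Lemma~\ref{lem:local_glm}, is precisely what lets these local identifications go through.
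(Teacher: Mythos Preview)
Your proposal is correct and follows essentially the same route as the paper's proof: invoke Lemma~\ref{lem:implicit_glm} for the form \eqref{eq:sol_glm}, use Lemma~\ref{lem:invar_glm} in place of Lemma~\ref{lem:invar} to upgrade \eqref{eq:cond1} to a standalone sufficient condition (as in Lemma~\ref{lem:invar_cond}), then argue that failure of \eqref{eq:cond1} contradicts $D$-GP by rerunning Lemma~\ref{lem:pre_gp} on \eqref{eq:stat_glm}, and finally reduce cases (b), (c) to (a) via Lemmas~\ref{lem:x_cont_gp} and \ref{lem:x_cont_null}.

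One small correction: your final paragraph overstates the difficulty of adapting Lemma~\ref{lem:pre_gp}. That proof uses only two facts about the residual $y - X\hbeta$: it is a fixed vector (by uniqueness of the fit), and multiplying the stationarity condition by $U(\cB)^T$ gives $Z^T(y - X\hbeta) = \lambda\tilde{s}$. In the GLM case one simply replaces $y - X\hbeta$ by $y - \nabla\psi(X\hbeta)$; uniqueness of the fit from Lemma~\ref{lem:basic_g} makes this a fixed vector, and multiplying \eqref{eq:stat_glm} by $U(\cB)^T$ yields the identical relation. The nonlinearity of $\nabla\psi$ never enters the linear-algebraic manipulations leading to \eqref{eq:case1} or \eqref{eq:case2}, and the $\tilde{s}\neq 0$ step goes through using Lemma~\ref{lem:basic_g}(iii). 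So no Bregman-projection or face-structure argument is needed for this part---the paper just says the arguments of Lemma~\ref{lem:pre_gp} apply to \eqref{eq:stat_glm} directly.
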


\begin{proof}
Under the conditions of the theorem, Lemma \ref{lem:implicit_glm} shows that any
solution in \eqref{eq:genlasso_g} must take the form \eqref{eq:sol_glm}.  As in
the arguments in Section \ref{sec:uniqueness_rough}, in the squared loss case,
we see that \eqref{eq:cond1}, \eqref{eq:cond2} are together sufficient for
implying uniqueness of the solution in \eqref{eq:genlasso_g}.  Moreover, 
Lemma \ref{lem:invar_glm} implies the linear subspace
\smash{$L=X\nul(D_{-\cB})$} is invariant under all choices of boundary sets
$\cB$ corresponding to optimal subgradients in \eqref{eq:genlasso_g}; as in the
proof of Lemma \ref{lem:invar_cond} in the squared loss case, such invariance
implies that \eqref{eq:cond1} is by itself a sufficient condition.  Finally, if
\eqref{eq:cond1} does not hold, then $X$ cannot be in $D$-GP, which follows by
the applying the arguments Lemma \ref{lem:pre_gp} in the squared loss case to
the KKT conditions \eqref{eq:stat_glm}.  This completes the proof under
condition (a).  Recall, conditions (b) or (c) simply imply (a) by Lemmas
\ref{lem:x_cont_gp} and \ref{lem:x_cont_null}.  
\end{proof}

As explained in Remark \ref{rem:n}, the set $\cN$ in \eqref{eq:n} has Lebesgue
measure zero for $G$ as in \eqref{eq:g_glm}, when $\nabla \psi$ is a 
$C^1$ diffeomorphism, which is true, \eg, for $\psi$ the Bernoulli
or Poisson cumulant generating function. However, in the case that $\psi$ is the 
Bernoulli cumulant generating function, and $G$ is the associated negative
log-likelihood, it would of course be natural to assume that the entries of
$y|X$ follow a Bernoulli distribution, and under this assumption it is not
necessarily true that the event $y \in \cN$ has zero probability.  A similar
statement holds for the Poisson case.  Thus, it does not seem
straightforward to bound the probability that $y \in \cN$ in cases of
fundamental interest, \eg, when the entries of $y|X$ follow a Bernoulli or
Poisson model and $G$ is the associated negative log-likehood, but intuitively
$y \in \cN$ seems ``unlikely'' in these cases.  A careful analysis is left to
future work.

\section{Discussion}
\label{sec:discussion}

In this paper, we derived sufficient conditions for the generalized lasso
problem \eqref{eq:genlasso} to have a unique solution, which allow for $p>n$ 
(in fact, allow for $p$ to be arbitrarily larger than $n$): as long as the
predictors and response jointly follow a continuous distribution, and the null
space of the penalty matrix has dimension at most $n$, our main result in
Theorem \ref{thm:uniqueness} shows that the solution is unique.  We have also 
extended our study to the problem \eqref{eq:genlasso_g}, where the loss is of
generalized linear model form \eqref{eq:g_glm}, and established an analogous
(and more general) uniqueness result in Theorem \ref{thm:uniqueness_glm}.  Along
the way, we have also shown some new results on the local stability of boundary
sets and active sets, in Lemma \ref{lem:local_glm}, and on the invariance of
key linear subspaces, in Lemma \ref{lem:invar_glm}, in the generalized linear
model case, which may be of interest in their own right.   

An interesting direction for future work is to carefully bound the probability
that $y \in \cN$, where $\cN$ is as in \eqref{eq:n}, in some typical generalized   
linear models like the Bernoulli and Poisson cases.  This would give us a 
more concrete probabillistic statement about uniqueness in such cases, following
from Theorem \ref{thm:uniqueness_glm}.  Another interesting direction is to
inspect the application of Theorems \ref{thm:uniqueness} and
\ref{thm:uniqueness_glm} to additive trend filtering and varying-coefficient
models.  Lastly, the local stability result in Lemma \ref{lem:local_glm} seems
to suggest that a nice expression for the divergence of the fit
\eqref{eq:fit_glm}, as a function of $y$, may be possible (furthermore, Lemma 
\ref{lem:invar_glm} suggests that this expression should be invariant to the
choice of boundary set). This may prove useful for various purposes, \eg,
for constructing unbiased risk estimates in penalized generalized linear
models. 

\subsection*{Acknowledgements}

The authors would like to thank Emmanuel Candes and Kevin Lin for several
helpful conversations, that led to the more careful inspection of the existence 
conditions for logistic and Poisson regression, in Section \ref{sec:existence}.      

\appendix
\section{Proofs}

\subsection{Proof of Lemma \ref{lem:pre_gp}}

As the generalized lasso solution is not unique, we know that condition
\eqref{eq:cond1} cannot hold, and there exist $\cB,s$ associated with an
optimal subgradient in problem \eqref{eq:genlasso} for which $\rank(X U(\cB)) <
k(\cB)$, for any \smash{$U(\cB) \in \R^{p \times k(\cB)}$} whose linearly
independent columns span \smash{$\nul(D_{-\cB})$}.  Thus, fix an arbitrary
choice of basis matrix $U(\cB)$. Then by construction we have that 
$Z_i = X U_i(\cB) \in \R^n$, $i=1,\ldots,k(\cB)$ are linearly dependent.       

Note that multiplying both sides of the KKT conditions \eqref{eq:stat} by 
$U(\cB)^T$ gives
\begin{equation}
\label{eq:u_stat}
U(\cB)^T X^T (y-X \hbeta) = \tilde{s},
\end{equation}
by definition of \smash{$\tilde{s}$}. We will first show that the assumptions in 
the lemma, \smash{$\tilde{s} \neq 0$}.  To see this, if 
\smash{$\tilde{s} = 0$}, then at any solution \smash{$\hbeta$} as in
\eqref{eq:sol} associated with $\cB,s$,   
$$
\|D\hbeta\|_1 = \|D_\cB \hbeta\|_1 = s^T D_\cB \hbeta = 0,
$$
since \smash{$\hbeta \in \col(U(\cB))$}.
Uniqueness of the penalty value as in Lemma \ref{lem:basic} now implies that 
\smash{$\|D\hbeta\|_1=0$} at {\it all} generalized lasso solutions (not only
those stemming from $\cB,s$).  Nonuniqueness of the solution is therefore only
possible if \smash{$\nul(X) \cap \nul(D) \neq \{0\}$}, contradicting the 
setup in the lemma.  

We may now choose $i_1 \in \{1,\ldots,k(\cB)\}$ such that
\smash{$\tilde{s}_{i_1} \neq 0$}, and $i_2,\ldots,i_k \in \{1,\ldots,k(\cB)\}$
such that $k \leq n+1$ and  
\begin{equation}
\label{eq:z_sum}
\sum_{j=1}^k c_j Z_{i_j} = 0.
\end{equation}
for some $c \neq 0$.  Taking an inner product on both sides with the residual 
\smash{$y-X\hbeta$}, and invoking the modified KKT conditions \eqref{eq:u_stat},  
gives  
\begin{equation}
\label{eq:c_sum}
\sum_{j=1}^k c_j \tilde{s}_{i_j} = 0.
\end{equation}
There are two cases to consider.  If \smash{$\tilde{s}_{i_j} = 0$} for
all $j=2,\ldots,k$, then we must have $c_1=0$, so from \eqref{eq:z_sum}, 
\begin{equation}
\label{eq:pre_case1} 
\sum_{j=2}^k c_j Z_{i_j} = 0.
\end{equation}
If instead \smash{$\tilde{s}_{i_j} \neq 0$} for some $j=2,\ldots,k$, then
define \smash{$\mathcal{J}=\{ j \in \{1,\ldots,k\} : \tilde{s}_{i_j} \neq 0\}$} 
(which we know in the present case has cardinality $|\mathcal{J}| \geq 2$). 
Rewrite \eqref{eq:c_sum} as
$$
c_1 \tilde{s}_{i_1} = -\sum_{j \in \mathcal{J} \setminus \{1\}}  
c_j \tilde{s}_{i_j},
$$
and hence rewrite \eqref{eq:z_sum} as
$$
\sum_{j \in \mathcal{J}} c_j \tilde{s}_{i_j} \frac{Z_{i_j}}{\tilde{s}_{i_j}} + 
\sum_{j \notin \mathcal{J}} c_j Z_{i_j} = 0,
$$
or
$$
\frac{Z_{i_1}}{\tilde{s}_{i_1}} = \frac{-1}{c_1 \tilde{s}_{i_1}} 
\sum_{j \in \mathcal{J} \setminus \{1\}} c_j \tilde{s}_{i_j}
\frac{Z_{i_j}}{\tilde{s}_{i_j}} +  
\frac{-1}{c_1 \tilde{s}_{i_1}} \sum_{j \notin \mathcal{J}} c_j Z_{i_j}.  
$$
or letting \smash{$a_{i_j}=-c_j\tilde{s}_{i_j}/(c_1\tilde{s}_{i_1})$} for $j \in 
  \mathcal{J}$,  
\begin{equation}
\label{eq:pre_case2}
\frac{Z_{i_1}}{\tilde{s}_{i_1}} =
\sum_{j \in \mathcal{J} \setminus \{1\}} a_{i_j} \frac{Z_{i_j}}{\tilde{s}_{i_j}}
+ \frac{-1}{c_1 \tilde{s}_{i_1}} \sum_{j \notin \mathcal{J}} c_j Z_{i_j}, 
\quad \text{where $\sum_{j \in \mathcal{J} \setminus \{1\}} a_{i_j}=1$}. 
\end{equation}
Reflecting on the two conclusions \eqref{eq:pre_case1}, \eqref{eq:pre_case2} 
from the two cases considered, we can reexpress these as \eqref{eq:case1}, 
\eqref{eq:case2}, respectively, completing the proof.
\hfill\qedsymbol

\subsection{Proof of Lemma \ref{lem:x_cont_gp}}

Fix an arbitrary $\cB \subseteq \{1,\ldots,m\}$ and $s \in \{-1,1\}^{|\cB|}$.
Define \smash{$U(\cB) \in \R^{p \times k(\cB)}$} whose columns form a basis for
\smash{$\nul(D_{-\cB})$} by running Gauss-Jordan elimination on $D_{-\cB}$.  We
may assume without a loss of generality that this is of the form 
$$
U(\cB) = \left[\begin{array}{c} 
I \\ F \end{array} \right],
$$
where \smash{$I \in \R^{k(\cB) \times k(\cB)}$} is the identity matrix and 
\smash{$F \in \R^{(p-k(\cB)) \times k(\cB)}$} is a generic dense matrix.  (If
need be, then we can always permute the columns of $X$, \ie, relabel the
predictor variables, in order to obtain such a form.) This allows us to express
the columns of $Z=XU(\cB)$ as  
$$
Z_i = \sum_{\ell=1}^p X_\ell U_{\ell i}(\cB) = X_i +  
\sum_{\ell=1}^{p-k(\cB)} X_{\ell+k(\cB)} F_{\ell i}, \quad  
\text{for $i=1,\ldots,k(\cB)$}.  
$$
Importantly, for each $i=1,\ldots,k(\cB)$, we see that only $Z_i$ depends
on $X_i$ (\ie, no other $Z_j$, $j \neq i$ depends on $X_i$).  Select
any $i_1,\ldots,i_k \in \{1,\ldots,k(\cB)\}$ with \smash{$\tilde{s}_{i_1}
  \neq 0$} and $k \leq n+1$.  Suppose first that
\smash{$\tilde{s}_{i_2}=\cdots=\tilde{s}_{i_k}=0$}.  Then
$$
Z_{i_2} \in \spa(\{Z_{i_3},\ldots,Z_{i_k}\}) \iff 
X_{i_2} \in -\sum_{\ell=1}^{p-k(\cB)} X_{\ell+k(\cB)} F_{\ell i} + 
\spa(\{Z_{i_3},\ldots,Z_{i_k}\}).
$$
Conditioning on $X_j$, $j \neq i_2$, the right-hand side above is just some
fixed affine space of dimension at most $n-1$, and so 
$$
\P\bigg( X_{i_2} \in
-\sum_{\ell=1}^{p-k(\cB)} X_{\ell+k(\cB)} F_{\ell i} + 
\spa(\{Z_{i_3},\ldots,Z_{i_k}\}) \, \bigg|\, X_j, j \neq i_2 \bigg) = 0, 
$$
owing to the fact that $X_{i_2} \,|\, X_j, j \neq i_2$ has a continuous
distribution over $\R^n$. Integrating out over $X_j$, $j \neq i_2$ then gives  
$$
\P\bigg( X_{i_2} \in
-\sum_{\ell=1}^{p-k(\cB)} X_{\ell+k(\cB)} F_{\ell i} + 
\spa(\{Z_{i_3},\ldots,Z_{i_k}\}) \bigg) = 0,
$$
which proves a violation of case (i) in the definition of $D$-GP happens
with probability zero.  Similar arguments show that a violation of case (ii) in
the definition of $D$-GP happens with probability zero.  Taking a union bound 
over all possible $\cB,s,i_1,\ldots,i_k$, and $k$ shows that any violation of the
defining properties of the $D$-GP condition happens with probability zero,
completing the proof.
\hfill\qedsymbol 

\subsection{Proof of Lemma \ref{lem:x_cont_null}}

Checking that $\nul(X) \cap \nul(D) = \{0\}$ is equivalent to checking that
the matrix 
$$
M = \left[\begin{array}{c} 
X \\ D \end{array} \right]
$$
has linearly independent columns.  In the case $p \leq n$, the columns of $X$
will be linearly independent almost surely (the argument for this is similar to
the arguments in the proof of Lemma \ref{lem:x_cont_gp}), so the columns of 
$M$ will be linearly independent almost surely.

Thus assume $p>n$. Let $q=\nuli(D)$, so $r=\rank(D)=p-q$.  Pick $r$ columns of
$D$ that are linearly independent; then the corresponding columns of $M$ are
linearly independent.  It now suffices to check linear independence of the
remaining $p-r$ columns of $M$.  But any $n$ columns of $X$ will be linearly
independent almost surely (again, the argument for this is similar to the
arguments from the proof of Lemma \ref{lem:x_cont_gp}), so the result is given
provided $p-r \leq n$, \ie, $q \leq n$.  
\hfill\qedsymbol

\subsection{Proof of Corollary \ref{cor:uniqueness_centered}}

Let $V = [\,V_1 \; V_{-1} \,] \in \R^{n \times n}$ be an orthogonal matrix, 
where \smash{$V_1 = \ones/\sqrt{n} \in \R^{n \times 1}$} and $V_{-1} \in \R^{n
  \times (n-1)}$ has columns that span $\col(M)$.  Note that the centered
generalized lasso criterion in \eqref{eq:genlasso_centered} can be written as  
$$
 \frac{1}{2} \| y - M X \beta \|_2^2 + \lambda \| D \beta \|_1 =
 \frac{1}{2} \| V_1^T y \|_2^2 + \|V_{-1}^T y - V_{-1}^T X \beta\|_2^2 + 
\lambda \| D \beta \|_1,
$$
hence problem \eqref{eq:genlasso_centered} is equivalent to a regular
(uncentered) generalized lasso problem with response \smash{$V_{-1}^T y \in
  \R^{n-1}$} and predictor matrix \smash{$V_{-1}^T X \in \R^{(n-1) \times p}$}.
By straightforward arguments (using integration and change of variables), 
$(X,y)$ having a density on $\R^{np+n}$ implies that \smash{$(V_{-1}^T
  X,V_{-1}^T y)$} has a density on $\R^{(n-1)p+(n-1)}$. Thus, we can
apply Theorem \ref{thm:uniqueness} to the generalized lasso problem with
response \smash{$V_{-1}^T y$} and predictor matrix \smash{$V_{-1}^T X$} to give
the desired result.  
\hfill\qedsymbol

\subsection{Proof of Lemma \ref{lem:z_scaled}}

Let $\sigma^{n-1}$ denote the $(n-1)$-dimensional spherical measure, which is
just a normalized version of the $(n-1)$-dimensional Hausdorff measure
$\cH^{n-1}$ on the unit sphere $\S^{n-1}$, i.e., defined by
\begin{equation}
\label{eq:sm_haus}
\sigma^{n-1}(S) = \frac{\cH^{n-1}(S)}{\cH^{n-1}(\S^{n-1})}, \quad \text{for $S
  \subseteq \S^{n-1}$}.
\end{equation}
Thus, it is sufficient to prove that the distribution of $Z/\|Z\|_2$ is
absolutely continuous with respect to $\sigma^{n-1}$.  For this, it is helpful
to recall that an alternative definition of the $(n-1)$-dimensional spherical
measure, for an arbitrary $\alpha>0$, is
\begin{equation}
\label{eq:sm_leb}
\sigma^{n-1}(S) = \frac{\cL^n(\cone_\alpha(S))}{\cL(\B^n_\alpha)}, \quad 
 \text{for $S\subseteq \S^{n-1}$}.
\end{equation}
where $\cL^n$ denotes $n$-dimensional Lebesgue measure,
$\B^n_\alpha=\{ x \in \R^n : \|x\|_2 \leq \alpha \}$ is the $n$-dimensional ball  
of radius $\alpha$, and $\cone_\alpha(S)= \{ tx : x \in S, \; t \in [0,\alpha]
\}$.  That \eqref{eq:sm_leb} and \eqref{eq:sm_haus} coincide is due to the fact
that any two measures that are uniformly distributed over a separable metric
space must be equal up to a positive constant (see Theorem 3.4 in 
\citet{mattila1995geometry}), and as both \eqref{eq:sm_leb} and
\eqref{eq:sm_haus} are probability measures on $\S^{n-1}$, this positive
constant must be 1.  

Now let $S \subseteq \S^{n-1}$ be a set of null spherical measure,
$\sigma^{n-1}(S)=0$.  From the representation for spherical measure in
\eqref{eq:sm_leb}, we see that \smash{$\cL^n(\cone_\alpha(S))=0$} for
any $\alpha>0$.  Denoting $\cone(S)=\{tx : x \in S, \; t \geq 0\}$, we have
$$
\cL^n(\cone(S)) = \cL^n\bigg(\bigcup_{k=1}^\infty \cone_k(S)\bigg) 
\leq \sum_{k=1}^\infty \cL^n(\cone_k(S)) = 0.
$$
This means that $\P(Z \in \cone(S))=0$, as the distribution of $Z$ is
absolutely continuous with respect to $\cL^n$, and moreover $\P(Z/\|Z\|_2 \in 
S)=0$, since $Z \in \cone(S) \iff Z \in Z/\|Z\|_2\in S$.  This completes the
proof. 
\hfill\qedsymbol

\subsection{Proof of Lemma \ref{lem:s_cap_a}}

Denote the $n$-dimensional unit ball by $\B^n=\{x \in \R^n: \|x\|_2 \leq 1\}$.
Note that the relative boundary of $\B^n \cap A$ is precisely
$$
\relbd(\B^n \cap A) = \S^{n-1} \cap A.
$$
The boundary of a convex set has Lebesgue measure zero (see Theorem 1 in
\citet{lang1986note}), and so we claim $\S^{n-1} \cap A$ has
$(n-1)$-dimensional Hausdorff measure zero.  To see this, note first that we can
assume without a loss of generality that $\dim(A)=n-1$, else the claim follows
immediately.  We can now interpret $\B^n \cap A$ as a set in the ambient space
$A$, which is diffeomorphic---via a change of basis---to $\R^{n-1}$.  To be more 
precise, if $V \in \R^{n \times (n-1)}$ is a matrix whose columns are
orthonormal and span the linear part of $A$, and $a \in A$ is arbitrary, then
$V^T(\B^n \cap A - a) \subseteq \R^{n-1}$ is a convex set, and by the fact cited
above its boundary must have $(n-1)$-dimensional Lebesgue measure zero.  It can
be directly checked that 
$$
\bd(V^T(\B^n \cap A - a)) = V^T (\relbd(\B^n \cap A) - a) = V^T 
(\S^{n-1} \cap A - a).
$$
As the $(n-1)$-dimensional Lebesgue measure and $(n-1)$-dimensional Hausdorff 
measure coincide on $\R^{n-1}$, we see that $V^T (\S^{n-1} \cap A - a)$ has
$(n-1)$-dimensional Hausdorff measure zero.  Lifting this set back to $\R^n$,
via the transformation  
$$
V V^T (\S^{n-1} \cap A - a) + a = \S^{n-1} \cap A,
$$
we see that $\S^{n-1} \cap A$ too must have Hausdorff measure zero, the desired
result, because the map $x \mapsto V x + a$ is Lipschitz (then apply, \eg,
Theorem 1 in Section 2.4.1 of \citet{evans1992measure}).  
\hfill\qedsymbol

\subsection{Proof of Lemma \ref{lem:x_scaled}}

Let us abbreviate \smash{$\tilde{X}=XW_{X}^{-1}$} for the scaled predictor
matrix, whose columns are \smash{$\tilde{X}_i=X_i/\|X_i\|_2$},
$i=1,\ldots,p$. By similar arguments to those given in the proof of Lemma
\ref{lem:x_cont_gp}, to show \smash{$\tilde{X}$} is in $D$-GP almost
surely, it suffices to show that for each $i=1,\ldots,p$,  
$$
\P\big( \tilde{X}_i \in A \,\big|\, \tilde{X}_j, j \neq i \big) = 0,  
$$
where $A \subseteq \R^n$ is an affine space depending on
\smash{$\tilde{X}_j$}, $j \neq i$.  This follows by applying our previous two
lemmas: the distribution of \smash{$\tilde{X}_i$} is absolutely continuous with
respect $(n-1)$-dimensional Hausdorff measure on $\S^{n-1}$, by Lemma
\ref{lem:z_scaled}, and $\S^{n-1} \cap A$ has $(n-1)$-dimensional Hausdorff
measure zero, by Lemma \ref{lem:s_cap_a}. 

To establish that the null space condition \smash{$\nul(\tilde{X}) \cap \nul(D)
  = \{0\}$} holds almost surely, note that the proof of Lemma
\ref{lem:x_cont_null} really only depends on the fact that any collection 
of $k$ columns of $X$, for $k \leq n$, are linearly independent almost
surely.  It can be directly checked that the scaled columns of
\smash{$\tilde{X}$} share this same property, and thus we can repeat the
same arguments as in Lemma \ref{lem:x_cont_null} to give the result.
\hfill\qedsymbol

\subsection{Proof of Corollary \ref{cor:uniqueness_standard}}

Let $V = [\,V_1 \; V_{-1} \,] \in \R^{n \times n}$ be as in the proof of
Corollary \ref{cor:uniqueness_centered}, and rewrite the criterion in 
\eqref{eq:genlasso_standard} as  
$$
 \frac{1}{2} \| y - M X W_{MX}^{-1} \beta \|_2^2 + \lambda \| D \beta \|_1 = 
 \frac{1}{2} \| V_1^T y \|_2^2 + \|V_{-1}^T y - V_{-1}^T X W_{MX}^{-1}
 \beta\|_2^2 + \lambda \| D \beta \|_1.
$$
Now for each $i=1,\ldots,p$, note that \smash{$\|V_{-1}^T X_i\|_2^2=
X_i^T V_{-1} V_{-1}^T X_i = \|MX_i\|_2^2$}, which means that 
$$
V_{-1}^T X W_{MX} = V_{-1}^T X  W_{V_{-1}^T  X}^{-1},
$$
precisely the scaled version of \smash{$V_{-1}^T X$}. From the
second to last display, we see that the standardized generalized lasso problem
\eqref{eq:genlasso_standard} is the same as a scaled generalized lasso problem
with response \smash{$V_{-1}^T y$} and scaled predictor matrix \smash{$V_{-1}^T
  X W_{V_{-1}^T  X}^{-1}$}.   Under the conditions placed on $y,X$, as
explained in the proof of Corollary \ref{cor:uniqueness_centered}, 
the distribution of \smash{$(V_{-1}^T X,V_{-1}^T y)$} is absolutely
continuous. Therefore we can apply Corollary \ref{cor:uniqueness_scaled} to give 
the result. 
\hfill\qedsymbol

\subsection{Proof of Lemma \ref{lem:dual_g}}

Write $h(\beta)=\lambda \|D\beta\|_1$. We may rewrite problem
\eqref{eq:genlasso_g} as thus
\begin{equation}
\label{eq:genlasso_g0}
\minimize_{\beta \in \R^p, \, z \in \R^n} \; G(z) + h(\beta) \quad\st\quad
z=X\beta.
\end{equation}
The Lagrangian of the above problem is
\begin{equation}
\label{eq:lag}
L(\beta,z,v) =  G(z) + h(\beta) + v^T (z - X\beta),
\end{equation}
and minimizing the Lagrangian over $\beta,z$ gives the dual problem 
\begin{equation}
\label{eq:dual_g0}
\maximize_{v \in \R^n} \; -G^*(-v) - h^*(X^T v),
\end{equation}
where $G^*$ is the conjugate of $G$, and $h^*$ is the conjugate of $h$.  Noting 
that \smash{$h(\beta)=\max_{\eta \in D^T B^m_\infty(\lambda)} \eta^T
  \beta $}, we have 
$$
h^*(\alpha) = I_{D^T B^m_\infty(\lambda)}(\alpha) 
= \begin{cases}
0 & \alpha \in D^T B^m_\infty(\lambda) \\
\infty & \text{otherwise}
\end{cases},
$$
and hence the dual problem \eqref{eq:dual_g0} is equivalent to the claimed
one \eqref{eq:dual_g}.

As $G$ is essentially smooth and essentially strictly convex, the
interior of its domain is nonempty.  Since the domain of  
$h$ is all of $\R^p$, this is enough to ensure that strong duality holds between 
\eqref{eq:genlasso_g0} and \eqref{eq:dual_g0} (see, \eg, Theorem 28.2 of
\citet{rockafellar1970convex}). Moreover, if a solution 
\smash{$\hbeta,\hat{z}$} is attained in \eqref{eq:genlasso_g0}, and a solution
\smash{$\hv$} is attained in \eqref{eq:dual_g0}, then by minimizing the
Lagrangian \smash{$L(\beta,z,\hv)$} in \eqref{eq:lag} over $z$ and $\beta$, we 
have the relationships   
\begin{equation}
\label{eq:primal_dual_g0}
\nabla G(\hat{z}) = -\hv, \quad\text{and}\quad
X^T \hv \in \partial h(\hbeta),
\end{equation}
respectively, where $\partial h(\cdot)$ is the subdifferential operator of $h$.
The first relationship in \eqref{eq:primal_dual_g0} can be rewritten as
\smash{$\nabla G(X\hbeta) = -\hv$}, matching the first relationship in
\eqref{eq:primal_dual_g}. The second relationship in \eqref{eq:primal_dual_g0}
can be rewritten as \smash{$D^T \hu \in \partial h(\hbeta)$}, where \smash{$\hu
  \in B^m_\infty(\lambda)$} is such that \smash{$X^T \hv = D^T \hu$}, and thus 
we can see that \smash{$\hu/\lambda$} is simply a relabeling of the subgradient
\smash{$\hgamma$} of the $\ell_1$ norm evaluated at \smash{$D\hbeta$}, matching
the second relationship in \eqref{eq:primal_dual_g}. 

Finally, we address the constraint qualification conditions \eqref{eq:dom_g},
\eqref{eq:ran_g}.  When \eqref{eq:dom_g} holds, we know that $G^*$ has no
directions of recession, and so if $C \neq \emptyset$, then the dual problem
\eqref{eq:dual_g} has a solution (see, \eg, Theorems 27.1 and 27.3 in
\citet{rockafellar1970convex}), equivalently, problem \eqref{eq:dual_g0} has a
solution. Suppose \eqref{eq:ran_g} also holds, or equivalently, 
$$
(-C) \cap \inte(\dom(G^*)) \neq \emptyset,
$$
which follows as $\inte(\dom(G^*)) = \inte(\ran(\nabla G))$, due to the fact 
that the map $\nabla G : \inte(\dom(G)) \to \inte(\dom(G^*))$ is a
homeomorphism.  Then we have know further that \smash{$-\hv \in
  \inte(\dom(G^*))$} by essential smoothness and essential strict convexity of 
$G^*$ (in particular, by the property that $\|\nabla G^*\|_2$ diverges
along any sequence convering to a boundary point of $\dom(G^*)$;
see, \eg, Theorem 3.12 in \citet{bauschke1997legendre}), so 
\smash{$\hat{z}  = \nabla G^*(-\hv)$} is well-defined; by 
construction it satisfies the first relationship in \eqref{eq:primal_dual_g0},
and minimizes the Lagrangian \smash{$L(\beta,z,\hv)$} over $z$.  The
second relationship in \eqref{eq:primal_dual_g0}, recall, can be rewritten as  
\smash{$D^T \hu \in \partial h(\hbeta)$}; that the Lagrangian
\smash{$L(\beta,z,\hv)$} attains its infimum over $\beta$ follows from 
the fact that the map \smash{$\beta \mapsto h(\beta)-\hu^T D\beta$} has no  
strict directions of recession (directions of recession in which this map is not 
constant). We have shown that the Lagrangian \smash{$L(\beta,z,\hv)$} attains 
its infimum over $\beta,z$.  By strong duality, this is enough to ensure that
problem \eqref{eq:genlasso_g0} has a solution, equivalently, that problem
\eqref{eq:genlasso_g} has a solution, completing the proof. 

\subsection{Proof of Lemma \ref{lem:exist_glm_unreg}}

When $\lambda=0$, note that $C=\nul(X^T)$, so \eqref{eq:ran_psi} becomes
\eqref{eq:ran_psi_unreg}.  For Poisson regression, the condition
\eqref{eq:ran_psi_pois} is an immediate rewriting of \eqref{eq:ran_psi_unreg},
because $\inte(\ran(\nabla \psi)) = \R^n_{++}$, where $\R_{++}=(0,\infty)$
denotes the positive real numbers.  For logistic regression, the argument
leading to \eqref{eq:ran_psi_log} is a little more tricky, and is given below.

Observe that in the logistic case, $\inte(\ran(\nabla \psi)) = (0,1)^n$, hence 
condition \eqref{eq:ran_psi_unreg} holds if and only if there exists $a \in
(0,1)^n$ such that $X^T (y-a)=0$, i.e., there exists $a' \in (0,1)^n$ such that
$X^T D_Y a' = 0$, where $D_Y=\diag(Y_1,\ldots,Y_n)$.  The latter statement is
equivalent to   
\begin{equation}
\label{eq:ran_psi_log1}
\nul(X^T D_Y) \cap \R^n_{++} \neq \emptyset.
\end{equation}
We claim that this is actually in turn equivalent to  
\begin{equation}
\label{eq:ran_psi_log2}
\col(D_Y X) \cap \R^n_+ = \{0\}.
\end{equation}
where $\R_+=[0,\infty)$ denotes the nonnegative real numbers, which would
complete the proof, as the claimed condition \eqref{eq:ran_psi_log2} is a direct
rewriting of \eqref{eq:ran_psi_log}.

Intuitively, to see the equivalence of \eqref{eq:ran_psi_log1} and
\eqref{eq:ran_psi_log2}, it helps to draw a picture: the two subspaces $\col(D_Y
X)$ and $\nul(X^T D_Y)$ are orthocomplements, and if the former only intersects
the nonnegative orthant at $0$, then the latter must pass through the negative 
orthant. This intuition is formalized by Stiemke's lemma.  This is a theorem of
alternatives, and a close relative of Farkas' lemma (see, \eg, Theorem 2 in
Chapter 1 of \citet{kemp1978introduction}); we state it below for reference. 

\begin{lemma}
Given $A \in \R^{n \times p}$, exactly one of the following systems has a
solution: 
\begin{itemize}
\item $Ax=0$, $x<0$ for some $x \in \R^p$;
\item $A^T y \geq 0$ for some $y \in\R^n$, $y \neq 0$.
\end{itemize}
\end{lemma}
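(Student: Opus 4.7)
My plan is to derive this classical theorem of alternatives from Farkas' lemma, which is the more fundamental result in this family. The argument will naturally split into a mutual-exclusivity step and an existence step. A preliminary remark: for the two alternatives to form a genuine dichotomy, the second should be read as ``$A^T y \geq 0$ and $A^T y \neq 0$'' (the form as written, with only $y \neq 0$, fails in the degenerate case $A=0$), and the argument will naturally produce this strengthened form.

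For mutual exclusivity, I would assume both systems have solutions and derive a contradiction from the identity
\[
0 \;=\; y^T (Ax) \;=\; (A^T y)^T x.
\]
The right-hand side is strictly negative because some coordinate of $A^T y$ is strictly positive while every coordinate of $x$ is strictly negative, giving the contradiction in one line.

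For existence I would reduce the first alternative to a standard Farkas form. By the homogeneity of the equation $Ax = 0$, solvability of ``$Ax=0$, $x<0$'' is equivalent to solvability of ``$A\bar{x} = 0$, $\bar{x} \geq \mathbf{1}$'' (negate $x$ and rescale by $1/\min_i |x_i|$). The change of variable $z = \bar{x} - \mathbf{1}$ recasts this as feasibility of $Az = -A\mathbf{1}$, $z \geq 0$. Applying Farkas' lemma in the form ``either $Az = b, z \geq 0$ is solvable or $\exists y$ with $A^T y \geq 0$, $b^T y < 0$'' with $b = -A\mathbf{1}$, the first alternative fails iff there exists $y$ with $A^T y \geq 0$ and $\mathbf{1}^T (A^T y) > 0$. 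This last inequality forces some entry of $A^T y$ to be strictly positive, so $A^T y \neq 0$ (and in particular $y \neq 0$), yielding the second alternative.

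Once Farkas' lemma is granted, no step presents a real obstacle; each is a short algebraic manipulation. The only thing that needs care is bookkeeping: verifying that the reduction ``$Ax=0$, $x<0$'' $\iff$ ``$Az=-A\mathbf{1}$, $z\geq 0$'' correctly transports between the two formulations, and that the Farkas conclusion $\mathbf{1}^T(A^T y)>0$ sharpens the stated ``$y\neq 0$'' to the intended ``$A^T y \neq 0$'' needed for the dichotomy to be genuine.
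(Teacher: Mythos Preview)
Your proof is correct. The paper does not actually prove this lemma: it is stated as Stiemke's lemma, a known theorem of alternatives, and the authors simply cite a reference (Theorem~2 in Chapter~1 of Kemp's \emph{Introduction to Mathematical Programming}) rather than giving an argument. So there is no ``paper's proof'' to compare against; your derivation from Farkas' lemma is a standard and clean route to this result.

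Your observation about the degenerate case $A=0$ is apt: as stated, the second alternative asks only for $y \neq 0$, and when $A=0$ both systems are solvable (any $x<0$ satisfies $Ax=0$, and any $y\neq 0$ satisfies $A^T y = 0 \geq 0$), so ``exactly one'' fails. The intended and classical form requires $A^T y \neq 0$, which your argument correctly produces. In the paper's application the matrix in question is $X^T D_Y$, which is not assumed to be zero, so the discrepancy is immaterial there; but your care in stating the sharpened form is good practice.
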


\noindent
Applying this lemma to $A=X^T D_Y$ gives the equivalence of
\eqref{eq:ran_psi_log1} and \eqref{eq:ran_psi_log2}, as desired.
\hfill\qedsymbol

\subsection{Proof of Lemma \ref{lem:exist_glm_reg}}

We prove the result for the logistic case; the result for the Poisson case
follows similarly.  Recall that in the logistic case, $\inte(\ran(\nabla\psi)) = 
(0,1)^n$.  Given $y \in \{0,1\}^n$, and arbitrarily small $\epsilon>0$, note
that we can always write $y=z+\delta$, where $z \in (0,1)^n$ and \smash{$\delta
  \in B^m_\infty(\epsilon)$}. Thus \eqref{eq:ran_psi} holds as long as 
$$
C = (X^T)^{-1} \big(D^T B^m_\infty(\lambda)\big)
= \big\{ u \in \R^n : X^T u = D^T v, \; v \in B^m_\infty(\lambda) \big\} 
$$
contains a $\ell_\infty$ ball of arbitrarily small radius centered at the 
origin.  As $\lambda > 0$, this holds provided $\row(X) \subseteq
\row(D)$, i.e., $\nul(D) \subseteq \nul(X)$, as claimed.
\hfill\qedsymbol

\subsection{Proof of Lemma \ref{lem:implicit_glm}}

We first establish \eqref{eq:fit_glm}, \eqref{eq:sol_glm}.  
Multiplying both sides of stationarity condition \eqref{eq:stat_glm} by
\smash{$P_{\nul(D_{-\cB})}$} yields
$$
P_{\nul(D_{-\cB})} X^T \big(y-\nabla \psi(X \hbeta)\big) = \lambda
P_{\nul(D_{-\cB})} D_\cB^T s.
$$
Let us abbreviate \smash{$M=P_{\nul(D_{-\cB})} X^T$}.  After rearranging, the
above becomes 
$$
M\nabla \psi(X \hbeta) = M (y - \lambda M^+ P_{\nul(D_{-\cB})} D_\cB^T s).    
$$
where we have used 
\smash{$P_{\nul(D_{-\cB})} D_\cB^T s = MM^+ P_{\nul(D_{-\cB})} D_\cB^T s$}, 
which holds as \smash{$P_{\nul(D_{-\cB})} D_\cB^T s \in
  \col(M)$}, from the second to last display. Moreover, we can simplify the
above, using \smash{$M^+ P_{\nul(D_{-\cB})} = M^+$}, to yield
$$
M \nabla \psi(X \hbeta) = M (y - \lambda M^+ D_\cB^T s), 
$$
and multiplying both sides by $M^+$,
\begin{equation}
\label{eq:stat_glm_b1}
P_{\row(M)} \nabla \psi(X \hbeta) = P_{\row(M)} (y - \lambda M^+ D_\cB^T s). 
\end{equation}
Lastly, by virtue of the fact that \smash{$D_{-\cB} \hbeta = 0$}, we have 
\smash{$X\hbeta = X P_{\nul(D_{-\cB})} \hbeta = M^T \hbeta \in \row(M)$}, so 
\begin{equation}
\label{eq:stat_glm_b2}
P_{\nul(M)} X \hbeta = 0.
\end{equation}

We will now show that \eqref{eq:stat_glm_b1}, \eqref{eq:stat_glm_b2} together
imply \smash{$\nabla \psi(X \hbeta)$} can be expressed in terms of a certain 
Bregman projection onto an affine subspace, with respect to $\psi^*$.  To this
end, consider 
$$
\hat{x} = P_S^f(a) = \argmin_{x \in S} \Big(f(x)-f(a)-
\langle \nabla f(a), x-a \rangle\Big),
$$
for a function $f$, point $a$, and set $S$.  The first-order optimality
conditions are 
$$
\big\langle \nabla f(\hat{x}) - \nabla f(a), 
z - \hat{x} \big\rangle \geq 0 \;\,\text{for all $z \in S$}, 
\quad\text{and}\quad \hat{x} \in S.
$$
When $S$ is an affine subspace, \ie, $S=c+L$ for a point $c$ and linear subspace
$L$, this reduces to 
$$
\big\langle \nabla f(\hat{x}) - \nabla f(a), 
v \big\rangle = 0 \;\,\text{for all $v \in L$},
\quad\text{and}\quad \hat{x} \in c+L.
$$
\ie,
\begin{equation}
\label{eq:breg_opt}
P_L \nabla f(\hat{x}) = P_L \nabla f(a),
\quad\text{and}\quad P_{L^\perp} \hat{x} = P_{L^\perp} c. 
\end{equation}
In other words, \smash{$\hat{x}=P_S^f(a)$}, for $S=c+L$, if and only if
\eqref{eq:breg_opt} holds.

Set \smash{$\hat{x}=\nabla \psi(X\hbeta)$}, $f=\psi^*$, $a=\nabla \psi(0)$, 
\smash{$c=y-\lambda M^+D_{-\cB}^T s$}, and $L=\nul(M)$. We see that
\eqref{eq:stat_glm_b1} is equivalent to 
\smash{$P_{L^\perp} \hat{x} = P_{L^\perp} c$}.  Meanwhile, using \smash{$(\nabla
  \psi)^{-1} = \nabla \psi^*$} as guaranteed by essential smoothness and
essential strict convexity of $\psi$, we see that \eqref{eq:stat_glm_b2} is
equivalent to \smash{$P_{\nul(M)} \nabla \psi^* (\nabla\psi( X\hbeta)) = 0$},
in turn equivalent to \smash{$P_L \nabla f(\hat{x}) = P_L \nabla f(a)$}.  From
the first-order optimality conditions \eqref{eq:breg_opt}, this shows that
\smash{$\nabla \psi (X\hbeta) = P_{c+L}^f(a) = 
P_{y-K_{\cB,s}}^{\psi^*}(\nabla \psi(0))$}.  Using \smash{$(\nabla
  \psi)^{-1} = \nabla \psi^*$}, once again, establishes \eqref{eq:fit_glm}.

As for \eqref{eq:sol_glm}, this follows by simply writing \eqref{eq:fit_glm} as 
$$
M^T \hbeta = \nabla \psi^* \Big( P^{\psi^*}_{y-K_{\cB,s}}\big(\nabla \psi(0)    
\big)\Big),
$$
where we have again used \smash{$X\hbeta = X P_{\nul(D_{-\cB})} \hbeta = M^T
  \hbeta$}.  Solving the above linear system for \smash{$\hbeta$} gives
\eqref{eq:sol_glm}, where \smash{$b \in \nul(M^T)=\nul(XP_{\nul(D_{-\cB})})$}.
This constraint together with \smash{$b \in \nul(D_{-\cB})$} implies
\smash{$b \in \nul(X) \cap \nul(D_{-\cB})$}, as claimed.

Finally, the results with $\cA,r$ in place of $\cB,s$ follow similarly.  We
begin by multiplying both sides of \eqref{eq:stat_glm} by
\smash{$P_{\nul(D_{-\cA})}$}, and then proceed with the same chain of arguments 
as above. 
\hfill\qedsymbol

\subsection{Proof of Lemma \ref{lem:local_glm}}

The proof follows a similar general strategy to that of Lemma 9 in
\citet{tibshirani2012degrees}. We will abbreviate $\cB=\cB(y)$, $s=s(y)$,
$\cA=\cA(y)$, and $r=r(y)$. Consider the representation for
\smash{$\hbeta(y)$} in \eqref{eq:sol_glm} of Lemma \ref{lem:implicit_glm}.  As
the active set is $\cA$, we know that
$$
D_{\cB \setminus \cA} (XP_{\nul(D_{-\cB})})^+ 
\nabla \psi^* \Big( P^{\psi^*}_{y-K_{\cB,s}}\big(\nabla \psi(0) \big)\Big) + 
D_{\cB \setminus \cA} b = 0,
$$
\ie, 
$$
D_{\cB \setminus \cA} (XP_{\nul(D_{-\cB})})^+  
\nabla \psi^* \Big( P^{\psi^*}_{y-K_{\cB,s}}\big(\nabla \psi(0) \big)\Big) =  
-D_{\cB \setminus \cA} b \in D_{\cB \setminus \cA} \big(\nul(X) \cap 
\nul(D_{-\cB})\big),
$$
and so
$$
P_{[D_{\cB \setminus \cA}(\nul(X) \cap \nul(D_{-\cB}))]^\perp}  
D_{\cB \setminus \cA} (XP_{\nul(D_{-\cB})})^+ 
\nabla \psi^* \Big( P^{\psi^*}_{y-K_{\cB,s}}\big(\nabla \psi(0) \big)\Big) = 0.
$$
Recalling $M_{\cA,\cB}$ as defined in \eqref{eq:m}, and abbreviating 
\smash{$\hat{x}=P^{\psi^*}_{y-K_{\cB,s}}(\nabla \psi(0))$}, we may write this
simply as
$$
\nabla \psi^* (\hat{x}) \in \nul(M_{\cA,\cB}).
$$
Since \smash{$\nabla\psi^*(\hat{x}) = X\hbeta(y)$}, we have
\smash{$\nabla\psi^*(\hat{x}) \in \col(XP_{\nul(D_{-\cB})})$}, so
combining this with above display, and using \smash{$(\nabla\psi^*)^{-1} =
  \nabla\psi$}, gives 
$$
\hat{x} \in \nabla\psi\big( \col(XP_{\nul(D_{-\cB})}) \cap
\nul(M_{\cA,\cB})\big).  
$$
And since \smash{$\hat{x} \in y-K_{\cB,s}$}, with \smash{$K_{\cB,s}$} an
affine space, as defined in \eqref{eq:k}, we have \smash{$y \in \hat{x} + 
  K_{\cB,s}$}, which combined with the last display implies
$$
y \in K_{\cB,s} + \nabla\psi\big( \col(XP_{\nul(D_{-\cB})}) \cap
\nul(M_{\cA,\cB})\big). 
$$
But as $y \notin \cN$, where the set $\cN$ is defined in \eqref{eq:n}, we arrive
at  
$$
M_{\cA,\cB} = P_{[D_{\cB \setminus \cA}(\nul(X) \cap \nul(D_{-\cB}))]^\perp}    
D_{\cB \setminus \cA} (XP_{\nul(D_{-\cB})})^+ = 0,
$$
which means 
\begin{equation}
\label{eq:fact_ab}
\col\big(D_{\cB \setminus \cA} (XP_{\nul(D_{-\cB})})^+\big) \subseteq
D_{\cB \setminus \cA}\big(\nul(X) \cap \nul(D_{-\cB})\big).
\end{equation}
This is an important realization that we will return to shortly.

As for the optimal subgradient \smash{$\hgamma(y)$} corresponding to 
\smash{$\hbeta(y)$}, note that we can write
\begin{align}
\nonumber
\hgamma_\cB(y) &= \lambda s, \\
\label{eq:gamma_y}
\hgamma_{-\cB}(y) &= \frac{1}{\lambda} (D_{-\cB}^T)^+\Big[
X^T \Big(y - P^{\psi^*}_{y-K_{\cB,s}}\big(\nabla \psi(0) \big)\Big) - 
\lambda D_\cB^T s \Big]+c,
\end{align}
for some \smash{$c \in \nul(D_{-\cB}^T)$}.  The first expression holds by
definition of $\cB,s$, and the second is a result of solving for
\smash{$\hgamma_{-\cB}(y)$} in the stationarity condition \eqref{eq:stat_glm},
after plugging in for the form of the fit in \eqref{eq:fit_glm}.  

Now, at a new response $y'$, consider defining
\begin{align*}
\hbeta(y') &= (XP_{\nul(D_{-\cB})})^+ \nabla \psi^* 
\Big( P^{\psi^*}_{y'-K_{\cB,s}}\big(\nabla \psi(0) \big)\Big) + b', \\   
\hgamma_\cB(y') &= \lambda s, \\
\hgamma_{-\cB}(y') &= \frac{1}{\lambda} (D_{-\cB}^T)^+\Big[
X^T \Big(y' - P^{\psi^*}_{y'-K_{\cB,s}}\big(\nabla \psi(0) \big)\Big) - 
\lambda D_\cB^T s \Big]+c,
\end{align*}
for some \smash{$b' \in \nul(X) \cap \nul(D_{-\cB})$} to be specified later, and
for the same value of \smash{$c \in \nul(D_{-\cB}^T)$} as in
\eqref{eq:gamma_y}.  By the same arguments as given at the end of the proof of
Lemma \ref{lem:dual_g}, where we discussed the constraint qualification
conditions \eqref{eq:dom_g}, \eqref{eq:ran_g}, the Bregman projection
\smash{$P^{\psi^*}_{y'-K_{\cB,s}}(\nabla \psi(0))$} in the above expressions
is well-defined, for any $y'$, under \eqref{eq:dom_psi}.  However, this Bregman
projection need not lie in $\inte(\dom(\psi^*))$---and therefore 
\smash{$\nabla\psi^*(P^{\psi^*}_{y'-K_{\cB,s}}(\nabla \psi(0)))$} need not be
well-defined---unless we have the additional condition $y' \in
\inte(\ran(\nabla\psi)) + C$.  Fortunately, under \eqref{eq:ran_psi}, the latter 
condition on $y'$ is implied as long as $y'$ is sufficiently close to $y$, \ie,
there exists a neighborhood $U_0$ of $y$ such that $y' \in
\inte(\ran(\nabla\psi)) + C$, provided $y' \in U_0$.  By Lemma \ref{lem:dual_g}, 
we see that a solution in \eqref{eq:genlasso_g} exists at such a point
$y'$.  In what remains, we will show that this solution and its optimal
subgradient obey the form in the above display.     

Note that, by construction, the pair \smash{$(\hbeta(y'),\hgamma(y'))$} defined
above satisfy the stationarity condition \eqref{eq:stat_glm} at $y'$, and
\smash{$\hgamma(y')$} has boundary set and boundary signs $\cB,s$.  It remains   
to show that \smash{$(\hbeta(y'),\hgamma(y'))$} satisfy the subgradient
condition \eqref{eq:subg_g}, and that \smash{$\hbeta(y')$} has active set and
active signs $\cA,r$; equivalently, it remains to verify the following three
properties, for $y'$ sufficiently close to $y$, and for an appropriate choice of
$b'$:   
\begin{enumerate}[(i)]
\item $\|\hgamma_{-\cB}(y')\|_\infty < 1$;
\item $\supp(D\hbeta(y'))=\cA$;
\item $\sign(D_\cA\hbeta(y'))=r$.
\end{enumerate}

Because \smash{$\hgamma(y)$} is a subgradient corresponding to
\smash{$\hbeta(y)$}, and has boundary set and boundary signs $\cB,s$, we know
that \smash{$\hgamma_{-\cB}(y)$} in \eqref{eq:gamma_y} has $\ell_\infty$ norm
strictly less than 1.  Thus, by continuity of 
$$
x \mapsto \bigg\|\frac{1}{\lambda} (D_{-\cB}^T)^+\Big[
X^T \Big(x - P^{\psi^*}_{x-K_{\cB,s}}\big(\nabla \psi(0) \big)\Big) - 
\lambda D_\cB^T s \Big]+c\, \bigg\|_\infty
$$
at $y$, which is implied by continuity of \smash{$x \mapsto
  P^{\psi^*}_{x-K_{\cB,s}}(\nabla \psi(0))$} at $y$, by Lemma
\ref{lem:breg_cont}, we know that there exists some neighborhood $U_1$ of $y$
such that property (i) holds, provided $y' \in U_1$.

By the important fact established in \eqref{eq:fact_ab}, we see that there
exists \smash{$b' \in \nul(X) \cap \nul(D_{-\cB})$} such that 
$$
D_{\cB \setminus \cA} b' = -D_{\cB \setminus \cA} (XP_{\nul(D_{-\cB})})^+  
\nabla \psi^* \Big( P^{\psi^*}_{y'-K_{\cB,s}}\big(\nabla \psi(0) \big)\Big),
$$
which implies that \smash{$D_{\cB \setminus \cA} \hbeta(y')=0$}.  To verify
properties (ii) and (iii), we must show this choice of $b'$ is such that 
\smash{$D_\cA \hbeta(y')$} is nonzero in every coordinate and has signs
matching $r$.  Define a map
$$
T(x) = (XP_{\nul(D_{-\cB})})^+  
\nabla \psi^* \Big( P^{\psi^*}_{x-K_{\cB,s}}\big(\nabla \psi(0) \big)\Big),
$$
which is continuous at $y$, again by continuity of \smash{$x \mapsto 
  P^{\psi^*}_{x-K_{\cB,s}}(\nabla \psi(0))$} at $y$, by Lemma
\ref{lem:breg_cont}. Observe that
$$
D_\cA \hbeta(y') = D_\cA T(y') + D_\cA b' = 
D_\cA T(y') + D_\cA b + D_\cA (b-b'). 
$$
As \smash{$D_\cA\hbeta(y) = D_\cA
  T(y)+D_\cA b$} is nonzero in every coordinate and has signs equal to $r$, by 
definition of $\cA,r$, and $T$ is continuous at $y$, there exists a 
neighborhood $U_2$ of $y$ such that \smash{$D_\cA T(y')+D_\cA b$} is nonzero 
in each coordinate with signs matching $r$, provided $y' \in U_2$.  Furthermore, as  
$$
 \|D_\cA(b-b')\|_\infty \leq \|D^T\|_{2,\infty} \|b-b'\|_2,  
$$
where \smash{$\|D^T\|_{2,\infty}$} denotes the maximum $\ell_2$ norm of rows of 
$D$, we see that \smash{$D_\cA T(y')+D_\cA b'$} will be nonzero in each  
coordinate with the correct signs, provided $b'$ can be chosen arbitrarily close
to $b$, subject to the restrictions \smash{$b' \in \nul(X) \cap \nul(D_{-\cB})$} 
and \smash{$D_{\cB \setminus \cA} b' = -D_{\cB \setminus \cA} T(y')$}.   

Such a $b'$ does indeed exist, by the bounded inverse theorem.  Let
\smash{$L=\nul(X) \cap \nul(D_{-\cB})$}, and 
\smash{$N=\nul(D_{\cB \setminus \cA}) \cap L$}.  Consider the linear map 
\smash{$D_{\cB \setminus \cA}$}, viewed as a function from $L/N$ 
(the quotient of $L$ by $N$) to \smash{$D_{\cB \setminus \cA}(L)$}: this is a  
bijection, and therefore it has a bounded 
inverse.  This means that there exists some $R>0$ such that  
$$
\|b-b'\|_2 \leq R \big\|D_{\cB \setminus \cA} T(y') -  
D_{\cB \setminus \cA} T(y) \big\|_2, 
$$
for a choice of \smash{$b' \in \nul(X) \cap \nul(D_{-\cB})$}
with \smash{$D_{\cB \setminus \cA} b' = -D_{\cB \setminus \cA} T(y')$}. 
By continuity of $T$ at $y$, once again, there exists a neighborhood $U_3$ of 
$y$ such that the right-hand side above is sufficiently small, \ie, such that 
$\|b-b'\|_2$ is sufficiently small, provided $y' \in U_3$. 

Finally, letting $U=U_0 \cap U_1 \cap U_2 \cap U_3$, we see that we have
established properties (i), (ii), and (iii), and hence the desired result,
provided $y' \in U$.  
\hfill\qedsymbol

\subsection{Continuity result for Bregman projections}

\begin{lemma}
\label{lem:breg_cont}
Let $f,f^*$ be a conjugate pair of Legendre (essentially smooth and essentially
strictly convex) functions on $\R^n$, with $0 \in \inte(\dom(f^*))$.  Let $S 
\subseteq \R^n$ be a nonempty closed convex set.  Then the Bregman projection 
map       
$$
x \mapsto P^f_{x-S}\big(\nabla f^*(0)\big)
$$
is continuous on all of $\R^n$.  Moreover, \smash{$P^f_{x-S}(\nabla f^*(0))
  \in \inte(\dom(f))$} for any $x \in \inte(\dom(f))+S$.  
\end{lemma}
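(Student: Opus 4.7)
The plan is to exploit the Legendre conjugacy to turn the Bregman projection into an ordinary constrained minimization of $f$, then use coercivity together with lower semicontinuity to get continuity.

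First, I would observe that the Legendre property together with $0\in\inte(\dom(f^*))$ gives $\nabla f(\nabla f^*(0))=0$, so the linear term in the Bregman divergence vanishes:
$$
D_f(z,\nabla f^*(0)) = f(z) - f(\nabla f^*(0)).
$$
Hence $P^f_{x-S}(\nabla f^*(0)) = \argmin_{z\in x-S} f(z)$, and the problem reduces to minimizing $f$ over the closed convex set $x-S$. Also, $0\in\inte(\dom(f^*))$ forces $f^*$ to be bounded above by some $M$ on a small ball $\{\|p\|_2\le r\}$ (closed proper convex functions are continuous on the interior of their domain), and the Fenchel--Young inequality $f(z)\ge p^Tz - f^*(p)$, optimized over $\|p\|_2\le r$, yields the linear lower bound $f(z)\ge r\|z\|_2 - M$. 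This coercivity ensures the infimum is attained on any closed set meeting $\dom(f)$; essential strict convexity makes the minimizer unique.

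For the ``moreover'' claim, assume $x\in\inte(\dom(f))+S$, so the Slater-type constraint qualification $(x-S)\cap\inte(\dom(f))\neq\emptyset$ holds and the first-order optimality condition $0\in\partial f(\hat z)+N_{x-S}(\hat z)$ applies to the unique minimizer $\hat z$. If $\hat z$ lay on $\bd(\dom(f))$, essential smoothness would give $\partial f(\hat z)=\emptyset$, contradicting the optimality condition. So $\hat z\in\inte(\dom(f))$.

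For continuity at $x_0\in\inte(\dom(f))+S$, I would argue by subsequences. Let $x_k\to x_0$, write $\hat z_k$ for the projection at $x_k$, and fix any $\tilde z\in(x_0-S)\cap\inte(\dom(f))$. The shifted point $\tilde z+(x_k-x_0)$ lies in $(x_k-S)\cap\inte(\dom(f))$ for large $k$, giving $f(\hat z_k)\le f(\tilde z+(x_k-x_0))\to f(\tilde z)$. Coercivity then keeps $\{\hat z_k\}$ bounded, and any subsequential limit $\hat z^*$ belongs to $x_0-S$ by closedness of $S$. Applying the shift trick to arbitrary $\tilde w\in(x_0-S)\cap\inte(\dom(f))$ and invoking lower semicontinuity of $f$ yields $f(\hat z^*)\le f(\tilde w)$; the ``moreover'' claim lets me restrict the infimum defining $f(\hat z)$ to this interior subset, so $f(\hat z^*)\le f(\hat z)$, and uniqueness forces $\hat z^*=\hat z$. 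The main obstacle is the boundary case $x\notin\inte(\dom(f))+S$, where the shift trick and the optimality-condition argument both break down; luckily, for the application in Lemma~\ref{lem:local_glm}, only continuity at interior points is needed, so one may read the lemma in that spirit (or invoke standard boundary-behaviour results for Legendre Bregman projections, e.g.\ those in \citet{bauschke1997legendre}).
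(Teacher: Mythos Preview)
Your reduction of the Bregman projection to $\argmin_{z\in x-S} f(z)$ via $\nabla f(\nabla f^*(0))=0$ is exactly the same starting point the paper uses (the paper phrases it as ``$\hv$ is the unique solution of $\min_{v\in x-S} f(v)$'').  From there the two proofs diverge.

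The paper changes variables to $w=v-x$ and invokes Corollary~7.43 of \citet{rockafellar2009variational}, a black-box parametric-optimization result: once one checks that $(w,x)\mapsto f(w+x)$ is closed proper convex and level-bounded in $w$ locally uniformly in $x$ (which follows from the absence of directions of recession, itself a consequence of $0\in\inte(\dom(f^*))$), continuity of the unique minimizer in $x$ drops out.  Your argument is instead fully elementary: you extract coercivity directly from Fenchel--Young, then run a shift-plus-lower-semicontinuity subsequence argument to pin down every cluster point as the unique minimizer at $x_0$.  The ``moreover'' claim is handled by essentially the same idea in both proofs (the paper appeals to the boundary behaviour of $\nabla f$ from \citet{bauschke1997legendre}; you use emptiness of $\partial f$ on $\bd(\dom(f))$, which is equivalent for Legendre $f$).

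What each buys: the paper's route is shorter and nominally yields continuity on all of $\R^n$ in one stroke, at the cost of importing variational-analysis machinery.  Your route is self-contained and transparent, but as you acknowledge it only establishes continuity at points $x_0\in\inte(\dom(f))+S$.  That restriction is harmless here---Lemma~\ref{lem:local_glm} only invokes the continuity at such points---so the gap is cosmetic relative to the application, though it does fall short of the lemma as literally stated.
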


\begin{proof}
As $0 \in \inte(\dom(f^*))$, we know that $f$ has no directions of recession   
(\eg, by Theorems 27.1 and 27.3 in \citet{rockafellar1970convex}), thus
the Bregman projection \smash{$P^f_{x-S}(\nabla f^*(0))$} is well-defined for
any $x \in \R^n$.  Further, for $x-S \in \inte(\dom(f))$, we know that  
\smash{$P^f_{x-S}(\nabla f^*(0)) \in \inte(\dom(f))$}, by essential smoothness
of $f$ (by the property that $\|\nabla f\|_2$ approaches $\infty$
along any sequence that converges to boundary point of $\dom(f)$; \eg, 
see Theorem 3.12 in \citet{bauschke1997legendre}).

It remains to verify continuity of \smash{$x \mapsto P^f_{x-S}(\nabla
  f^*(0))$}.  Write \smash{$P^f_{x-S}(\nabla   f^*(0))=\hv$},
where \smash{$\hv$} is the unique solution of 
$$
\minimize_{v \in x-S} \; f(v),
$$
or equivalently, \smash{$P^f_{x-S}(\nabla f^*(0))=\hat{w}+x$}, where
\smash{$\hat{w}$} is the unique solution of 
$$
\minimize_{w \in -S} \; f(w+x).
$$
It suffices to show continuity of the unique solution in the above problem, as  
a function of $x$.  This can be established using results from variational 
analysis, provided some conditions are met on
the bi-criterion function $f_0(w,x)=f(w+x)$. In particular, Corollary 7.43 in
\citet{rockafellar2009variational} implies that the unique minimizer in the
above problem is continuous in $x$, provided $f_0$ is a closed proper  
convex function that is level-bounded in $w$ locally uniformly in $x$.   By 
assumption, $f$ is a closed proper convex function (it is Legendre), and thus so
is $f_0$. The level-boundedness condition can be checked as follows. Fix any
$\alpha \in \R$ and $x \in \R^n$. The $\alpha$-level set $\{w : f(w+x) \leq
\alpha\}$ is bounded since $x  \mapsto f(x+w)$ has no directions of recession
(to see that this implies boundedness of all level sets, \eg, combine Theorem
27.1 and Corollary 8.7.1 of \citet{rockafellar1970convex}).  Meanwhile, for
any $x' \in \R^n$,  
$$
\{ w : f(w+x') \leq \alpha \} = \{w : f(w+x) \leq \alpha\} + x'-x.
$$
Hence, the $\alpha$-level set of $f_0(\cdot,x')$ is uniformly bounded for 
all $x'$ in a neighborhood of $x$, as desired.  This completes the proof.
\end{proof}

\subsection{Proof of Lemma \ref{lem:invar_glm}}

The proof is similar to that of Lemma 10 in
\citet{tibshirani2012degrees}. Let $\cB,s$ be the boundary set and signs of an  
arbitrary optimal subgradient in \smash{$\hgamma(y)$} in
\eqref{eq:genlasso_g}, and let $\cA,r$ be the active set and active signs
of an arbitrary solution in \smash{$\hbeta(y)$} in \eqref{eq:genlasso_g}.  (Note
that \smash{$\hgamma(y)$} need not correspond to \smash{$\hbeta(y)$}; it may 
be a subgradient corresponding to another solution in \eqref{eq:genlasso_g}.)    

By (two applications of) Lemma \ref{lem:local_glm}, there exist neighborhoods 
$U_1,U_2$ of $y$ such that, over $U_1$, optimal subgradients exist with boundary
set and boundary signs $\cB,s$, and over $U_2$, solutions exist with active set
and active signs $\cA,r$.  For any $y' \in U=U_1 \cap U_2$, by Lemma
\ref{lem:implicit_glm} and the uniqueness of the fit from Lemma
\ref{lem:basic_g}, we have  
$$
X\hbeta(y) = \nabla \psi^* \Big( P^{\psi^*}_{y-K_{\cB,s}}\big(\nabla \psi(0)   
\big)\Big) = \nabla \psi^* \Big( P^{\psi^*}_{y-K_{\cA,r}}\big(\nabla \psi(0)   
\big)\Big),
$$
and as $\nabla \psi^*$ is a homeomorphism, 
\begin{equation}
\label{eq:proj_ab}
 P^{\psi^*}_{y'-K_{\cB,s}}\big(\nabla \psi(0) \big) =
 P^{\psi^*}_{y'-K_{\cA,r}}\big(\nabla \psi(0) \big).
\end{equation}
We claim that this implies \smash{$\nul(P_{\nul(D_{-\cB})} X^T) =
  \nul(P_{\nul(D_{-\cA})} X^T)$}. 

To see this, take any direction \smash{$z \in \nul(P_{\nul(D_{-\cB})} X^T)$},
and let $\epsilon>0$ be sufficiently small so that $y'=y+\epsilon z \in U$.  
From \eqref{eq:proj_ab}, we have   
$$
P^{\psi^*}_{y'-K_{\cA,r}}\big(\nabla \psi(0) \big) = 
P^{\psi^*}_{y'-K_{\cB,s}}\big(\nabla \psi(0) \big) = 
P^{\psi^*}_{y-K_{\cB,s}}\big(\nabla \psi(0) \big) = 
P^{\psi^*}_{y-K_{\cA,r}}\big(\nabla \psi(0) \big),
$$
where the second equality used \smash{$y'-K_{\cB,s}=y-K_{\cB,s}$}, and the third
used the fact that \eqref{eq:proj_ab} indeed holds at $y$.  Now consider the
left-most and right-most expressions above.  For these two 
projections to match, we must have \smash{$z \in \nul(P_{\nul(D_{-\cA})} X^T)$}; 
otherwise, the affine 
subspaces \smash{$y'-K_{\cA,r}$} and \smash{$y-K_{\cA,r}$} would be parallel, 
in which case clearly the projections cannot coincide.  Hence, we have shown
that \smash{$\nul(P_{\nul(D_{-\cB})} X^T) \subseteq \nul(P_{\nul(D_{-\cA})}
  X^T)$}. The reverse inclusion follows similarly, establishing the desired
claim.    

Lastly, as $\cB,\cA$ were arbitrary, the linear subspace
\smash{$L=\nul(P_{\nul(D_{-\cB})} X^T)=\nul(P_{\nul(D_{-\cA})} X^T)$} must be
unchanged for any choice of boundary set $\cB$ and active set $\cA$ at $y$,
completing the proof.   
\hfill\qedsymbol

\bibliographystyle{plainnat}
\bibliography{refs}

\end{document}